\setlist[enumerate,1]{label=(\roman*)}
\numberwithin{equation}{section}
\theoremstyle{plain}
\newtheorem{theorem}    {Theorem}[section]
\newtheorem{lemma}      [theorem] {Lemma}
\newtheorem{proposition}[theorem] {Proposition}
\newtheorem{corollary}  [theorem] {Corollary}
\theoremstyle{definition}
\newtheorem{definition} [theorem] {Definition}
\newtheorem{claim}      [theorem] {Claim}
\newtheorem{strategy}   [theorem] {Strategy}
\newtheorem{observation}[theorem] {Observation}
\newtheorem*{remark}    {Remark}
\newcommand{\indef}[1]{\emph{#1}}
\newcommand{\defined}{\mathrel{\coloneqq}}
\DeclarePairedDelimiter{\p}{\lparen}{\rparen}
\renewcommand{\le}{\leqslant}
\renewcommand{\leq}{\leqslant}
\renewcommand{\ge}{\geqslant}
\renewcommand{\geq}{\geqslant}
\let\oldexists\exists
\let\exists\relax
\DeclareMathOperator{\exists}{\oldexists}
\let\oldforall\forall
\let\forall\relax
\DeclareMathOperator{\forall}{\oldforall}
\newcommand{\st}{\mathbin{\colon}}
\DeclarePairedDelimiter{\set}{\lbrace}{\rbrace}
\DeclarePairedDelimiter{\card}{\lvert}{\rvert}
\DeclarePairedDelimiter{\floor}{\lfloor}{\rfloor}
\DeclarePairedDelimiter{\ceil}{\lceil}{\rceil}
\newcommand{\mod}[1]{\ (\mathrm{mod}\ #1)}
\DeclareMathOperator{\boundary}{\partial}
\DeclarePairedDelimiterX{\abs}[1]
  {\lvert}{\rvert}{\ifblank{#1}{\,\cdot\,}{#1}}
\DeclarePairedDelimiterX{\norm}[1]
  {\lVert}{\rVert}{\ifblank{#1}{\,\cdot\,}{#1}}
\DeclarePairedDelimiterX{\inner}[2]
  {\langle}{\rangle}{\ifblank{#1}{\,\cdot\,}{#1},\ifblank{#2}{\,\cdot\,}{#2}}
\DeclareMathDelimiter{\given}
  {\mathbin}{symbols}{"6A}{largesymbols}{"0C}
\DeclareMathOperator{\Prob}{\mathbb{P}}
\DeclarePairedDelimiterXPP{\prob}[1]
  {\Prob}{\lparen}{\rparen}{}
  {\renewcommand{\given}{\nonscript\;\delimsize\vert\nonscript\;\mathopen{}}
  \ifblank{#1}{\,\cdot\,}{#1}}
\DeclareMathOperator{\Expec}{\mathbb{E}}
\DeclarePairedDelimiterXPP{\expec}[1]
  {\Expec}{\lparen}{\rparen}{}
  {\renewcommand{\given}{\nonscript\;\delimsize\vert\nonscript\;\mathopen{}}
  \ifblank{#1}{\,\cdot\,}{#1}}
\DeclareMathOperator{\Var}{Var}
\DeclarePairedDelimiterXPP{\var}[1]
  {\Var}{\lparen}{\rparen}{}
  {\renewcommand{\given}{\nonscript\;\delimsize\vert\nonscript\;\mathopen{}}
  \ifblank{#1}{\,\cdot\,}{#1}}
\DeclareMathOperator{\Cov}{Cov}
\DeclarePairedDelimiterXPP{\cov}[2]
  {\Cov}{\lparen}{\rparen}{}{#1,#2}
\newcommand{\NN}{\mathbb{N}}
\newcommand{\RR}{\mathbb{R}}
\newcommand{\ZZ}{\mathbb{Z}}
\newcommand{\cF}{\mathcal{F}}
\newcommand{\cR}{\mathcal{R}}
\DeclareMathOperator{\bb}{bb}
\newcommand{\Pperc}[1]{\Prob_{#1}}
\DeclarePairedDelimiterXPP{\pperc}[2]
  {\Pperc{#1}}{\lparen}{\rparen}{}
  {\renewcommand{\given}{\nonscript\;\delimsize\vert\nonscript\;\mathopen{}}
  \ifblank{#2}{\,\cdot\,}{#2}}
\newcommand{\Porient}[1]{\Prob_{#1}^{\rightarrow}}
\DeclarePairedDelimiterXPP{\porient}[2]
  {\Porient{#1}}{\lparen}{\rparen}{}
  {\renewcommand{\given}{\nonscript\;\delimsize\vert\nonscript\;\mathopen{}}
  \ifblank{#2}{\,\cdot\,}{#2}}
\begin{document}

\title[The Maker-Breaker percolation game on the square lattice]
    {The Maker-Breaker percolation game \\ on the square lattice}

\author{Vojt\v{e}ch Dvo\v{r}\'ak \and Adva Mond \and Victor Souza}
\address{Department of Pure Mathematics and Mathematical Statistics (DPMMS), University of Cambridge, Wilberforce Road, Cambridge, CB3 0WA, United Kingdom}
\email{\{vd273,am2759,vss28\}@cam.ac.uk}

\begin{abstract}
We study the $(m,b)$ Maker-Breaker percolation game on $\mathbb{Z}^2$, introduced by Day and Falgas-Ravry.
As our first result, we show that Breaker has a winning strategy for the $(m,b)$-game whenever $b \geq (2-\frac{1}{14} + o(1))m$, breaking the ratio $2$ barrier proved by Day and Falgas-Ravry.

Addressing further questions of Day and Falgas-Ravry, we show that Breaker can win the $(m,2m)$-game even if he allows Maker to claim $c$ edges before the game starts, for any integer $c$, and that he can moreover win rather fast (as a function of $c$).

Finally, we consider the game played on $\mathbb{Z}^2$ after the usual bond percolation process with parameter $p$ was performed.
We show that when $p$ is not too much larger than $1/2$, Breaker almost surely has a winning strategy for the $(1,1)$-game, even if Maker is allowed to choose the origin after the board is determined.
\end{abstract}

\maketitle


\section{Introduction}
\label{sec:intro}

\subsection{Background}

The so-called \emph{Maker-Breaker games} are a large and well studied class of positional games.
To define the simplest version of a Maker-Breaker game, we need a finite or infinite set $\Lambda$ and a family $\cF$ of subsets of $\Lambda$.
The set $\Lambda$ is called the \emph{board} of the game, and $\cF$ is the collection
of \emph{winning sets} or \emph{winning configurations}.
The game is played in \emph{rounds}.
In each round, Maker and Breaker respectively claim an as yet unclaimed element of $\Lambda$, where Maker is the first player.
Breaker wins the game if he claims at least one element in each $F \in \cF$ by any finite point of the game.
Otherwise, Maker wins.
On a finite board, this is equivalent to Maker claiming all elements of some $F \in \cF$ by the end of the game, though the same is not true for infinite boards.
This version of the game is also called the \emph{unbiased Maker-Breaker game}.
In the \emph{biased Maker-Breaker game}, introduced by Chvátal and Erd\H{o}s \cite{chvatal1978biased}, the players may claim more elements.
To be precise, given natural numbers $m$ and $b$, in each round of the $(m, b)$
Maker-Breaker game, Maker claims $m$ elements of the board whereas Breaker claims $b$.
For more information about Maker-Breaker games as well as other positional games, see the
excellent books of Beck \cite{beck2008combinatorial}, and of Hefetz, Krivelevich, Stojaković and Szabó~\cite{hefetz2014positional}.

In this paper, we address the following Maker-Breaker game played on an infinite board, introduced by Day and Falgas-Ravry~\cites{day2020maker1,day2020maker2}.
Let $\Lambda$ be an infinite connected graph and let $v_0 \in V(\Lambda)$ be a vertex.
In the scope of this paper, we only have $\Lambda$ being $\ZZ^2$ or an infinite connected subgraph of it.
The \emph{$(m,b)$ Maker-Breaker percolation game on $(\Lambda, v_0)$} is the game with board $E(\Lambda)$ where the winning sets are all infinite connected subgraphs of $\Lambda$ containing $v_0$.
That is,  Maker's goal is to ensure that $v_0$ is always contained in an infinite subgraph of $\Lambda$ spanned by the edges that she claimed and the unclaimed edges.
Note that Breaker wins the game by claiming at least one element in each winning set.
In this game this means that Breaker's goal is to claim any set of edges separating $v_0$ from infinity.
Notably, the $(1,1)$-game on $\Lambda$ can be seen as a generalisation
of the well-known Shannon switching game to an infinite board,
see Lehmann~\cite{lehman1964solution} for a description and a solution of this game.

Throughout the paper, we refer to the $(m,b)$ Maker-Breaker percolation game on $(\Lambda, v_0)$ as $(m,b)$-game on $(\Lambda, v_0)$.
If $\Lambda$ is a transitive graph, we omit the vertex $v_0$ in our notation, as it does not change the analysis of the game.

Next, we summarise the main results of the paper \cite{day2020maker2} of Day and Falgas-Ravry.

\begin{theorem}
\label{thm:original}
  Let $m, b \in \NN$. Then
  \begin{enumerate}
    \item Maker has a winning strategy for the $(1,1)$-game on $\ZZ^2$;
    \item If $m \geq 2b$, then Maker has a winning strategy
            for the $(m,b)$-game on $\ZZ^2$;
    \item If $b \geq 2m$, then Breaker has a winning strategy
            for the $(m,b)$-game on $\ZZ^2$.
  \end{enumerate}
\end{theorem}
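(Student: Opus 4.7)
The theorem splits into three distinct claims; I plan to treat each with a tailored strategy.

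For \textbf{part (i)}, the plan is a pairing strategy for Maker. I would construct an explicit pairing of $E(\ZZ^2)$ into disjoint pairs $\{e, e'\}$ with the property that every finite edge cut separating $v_0$ from infinity contains both edges of some pair. Maker then wins by responding to each Breaker move with the paired edge, ensuring that Breaker can never own every edge of any cut. A suitable pairing can be obtained via planar duality: cuts around $v_0$ correspond to dual cycles enclosing $v_0$, and pairing consecutive edges along nested ``rings'' works.

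For \textbf{part (ii)}, since $m \geq 2b$, Maker has two moves for every one of Breaker's. A natural plan is to maintain a growing finite tree $T$ rooted at $v_0$ whose outer boundary always contains unclaimed exit edges. Each of Breaker's $b$ moves can kill at most one such exit, and Maker uses two of her $m \geq 2b$ moves to open two new exits for each kill, so $T$ extends unboundedly and $v_0$ remains in an infinite Maker-plus-unclaimed component. Alternatively, one can lift the pairing of part (i) to a pairing by triples, where any winning cut must contain an entire triple, so Maker's $2b$ responses per round suffice to keep each triple intact.

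For \textbf{part (iii)}, Breaker plays a box-bearing strategy. Let $C_1, C_2, \ldots$ be the edge boundaries of concentric squares around $v_0$ with $|C_k|$ growing geometrically. Breaker's target is to claim \emph{all} edges of some $C_k$. Each round, he identifies the cut $C_k$ closest to completion (fewest Maker edges, most Breaker edges) and plays all $b$ of his edges inside it. Because $b \geq 2m$, in any cut where both players race, Breaker outpaces Maker two-to-one; a careful accounting across sufficiently many geometric-sized nested cuts shows Maker cannot ``dirty'' every cut before Breaker completes one.

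I expect \textbf{part (iii)} to be the main obstacle: the bookkeeping must confirm that the $2:1$ ratio lets Breaker finish some cut before Maker ruins all viable candidates, and the box sizes and priority rule need to be chosen compatibly (this is essentially a biased box-game argument, which is also the natural starting point for the improvement to $b \geq (2 - \tfrac{1}{14} + o(1))m$ that this paper aims to establish). Parts (i) and (ii) reduce to explicit pairing constructions on $\ZZ^2$, which are elementary but require compatibility with every cut around $v_0$ via planar duality.
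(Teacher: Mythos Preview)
This theorem is not proved in the present paper; it is quoted as background from Day and Falgas-Ravry~\cite{day2020maker2}, and the paper contains no proof of it. There is therefore nothing in the paper to compare your attempt against.

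That said, your proposal is a plan rather than a proof, and several of the plans have genuine gaps. For part~(i), the pairing idea is indeed what Day and Falgas-Ravry use (the present paper alludes to this in \Cref{sec:polluted}), but ``pairing consecutive edges along nested rings'' does not by itself guarantee that every finite cut contains a full pair; you would need to exhibit the pairing explicitly and verify this property. For part~(ii), the tree-growing sketch is too loose: ``opening two new exits for each kill'' does not obviously keep the component unbounded, since Breaker may play far from the current tree and pre-emptively block future growth directions; the ``pairing by triples'' alternative is not defined. For part~(iii), the nested-cuts box-game is a plausible line, but the accounting you describe (``Breaker outpaces Maker two-to-one'' on each cut) is not what drives the original argument. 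Day and Falgas-Ravry instead exploit the perimetric inequality $\card{\boundary C} \le 2e(C) + 4$ (\Cref{lem:perimetric} here) to show that Breaker can surround Maker's component directly; the box-game race you propose would require a separate analysis to show Maker cannot spread her $m$ edges across all candidate cuts faster than Breaker can close one, and you have not supplied it.
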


Note that clearly, neither player is harmed by having more moves on their turn, so if for instance, Breaker wins the $(m,b)$-game on a board, he also wins the $(m,b')$-game on that same board with $b' \geq b$.
This property is called \emph{bias monotonicity}.

In view of \Cref{thm:original},
Day and Falgas-Ravry raised many interesting questions.
Most strikingly, they asked if there is some critical ratio $\rho^*$ such that, there exists a positive function $\varphi(m) = o(m)$ such that Breaker wins the $\p[\big]{m, \rho^*m  + \varphi(m)}$-game and Maker wins the $\p[\big]{m, \rho^*m - \varphi(m)}$-game.
\Cref{thm:original} shows that if such ratio exists, then $1/2 \leq \rho^* \leq 2$.
These bounds are associated with the fact that a set of $k$ connected edges in $\ZZ^2$ has edge-boundary of size at most $2k + 4$, see \Cref{lem:perimetric}.
By the \emph{perimetric barrier} we refer to the limitation of either player being roughly twice as powerful as the the other player.
Although the main problem they suggest is to break the perimetric barrier, they set out as an open problem to show whether Breaker or Maker can win the $(m,2m-1)$ or $(2b-1,b)$ games, respectively.

\subsection{Our results}

As our first result, we break the perimetric barrier on Breaker's side, making progress towards answering \cite{day2020maker2}*{Question 5.5} of Day and Falgas-Ravry about the critical ratio.

\begin{theorem}
\label{thm:main1}
Consider the $(m,b)$ Maker-Breaker percolation game on $\ZZ^2$, where $m \ge 29$ and $b\ge 2m - s$ for some $0 \le s \le \frac{m-22}{14}$.
Then Breaker has a winning strategy, which moreover ensures that he wins within the first $3$ rounds of the game.
\end{theorem}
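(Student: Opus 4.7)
The plan is to give an explicit $3$-round strategy for Breaker and analyse it via the perimetric identity for finite vertex sets of $\ZZ^2$: for finite $T\subseteq V(\ZZ^2)$, writing $E[T]$ for the set of edges of $\ZZ^2$ with both endpoints in $T$, one has $|\partial T| = 4|T|-2|E[T]|$. Let $T_k$ denote Maker's cluster at $v_0$ after her $k$-th round, $r$ the number of Maker-played edges that close a cycle in Maker's own graph, and $x$ the number of edges of $E[T_3]$ not played by Maker. Then $|T_3| = 3m+1-r$ and $|E[T_3]| = 3m+x$, so
\[
    |\partial T_3| = 4(3m+1-r) - 2(3m+x) = 6m + 4 - 4r - 2x.
\]
Breaker wins as soon as $\partial T_3 \subseteq B_3$, where $B_3$ is his set of edges after round $3$. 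Since $|B_3|\le 3b = 6m-3s$, it suffices to force $4r+2x \ge 3s+4$ \emph{and} to ensure that his chosen edges actually lie in $\partial T_3$.

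The strategy is: in each round $k \in \{1,2,3\}$, after Maker moves, Breaker first plays as many edges of the currently uncovered boundary $\partial T_k \setminus B_{k-1}$ as his budget allows, and uses leftover moves on carefully placed ``trap'' edges just outside $T_k$. Traps are chosen so that, against any Maker response, each trap either stays on $\partial T_3$ (contributing directly to Breaker's cut), gets swallowed by later Maker extensions (contributing to $x$), or forces Maker's next extension to close a cycle in her graph (contributing to $r$). Running this book-keeping round by round and using the hypothesis $14s + 22 \le m$, one should be able to show that Maker cannot simultaneously keep $|T_3|$ close to $3m+1$ and $4r+2x$ below $3s+4$.

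The hard part is the tension between keeping Breaker's boundary edges on $\partial T_3$ and ensuring that enough trap edges actually end up in $E[T_3]$. Maker can ``swallow'' a Breaker edge $e$ by extending around $e$ via two neighbouring edges, thereby putting $e$ into $E[T_3]$; this raises $x$ by $1$ (saving Breaker $2$ units in $|\partial T_3|$), but costs Maker $2$ extensions, reducing her boundary growth by $4$, so swallowing is net-profitable for Breaker. Conversely, if Maker refuses to interact with Breaker's traps then her cluster is confined to a narrow region next to the gaps Breaker was forced to leave, and a direct boundary count again yields $|\partial T_3| \le 6m - 3s$. Quantifying this trade-off uniformly over Maker's possible strategies, and optimising the number of traps per round across the three rounds, is the main technical work; it is this optimisation that pins down the explicit constants $14$ and $22$ in the hypothesis $s \le (m-22)/14$.
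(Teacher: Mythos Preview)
Your proposal has a genuine structural gap: the identity $|T_3| = 3m+1-r$ and hence $|\partial T_3| = 6m+4-4r-2x$ silently assumes that all $3m$ of Maker's edges lie in the connected component of $v_0$. In the actual game Maker is under no such obligation. She may spend rounds $1$ and $2$ building a large component far from $v_0$, so that $T_1$ and $T_2$ are tiny and Breaker, who plays on $\partial T_k$ and nearby, commits almost all of his budget there; then in round $3$ she plays a short bridge to $v_0$, after which $T_3$ suddenly has close to $3m+1$ vertices but at most $b$ of Breaker's edges can lie on $\partial T_3$. Your accounting via $r$ and $x$ does not capture this scenario at all, because the distant edges are not cycle-closing and are never swallowed; they simply do not enter $T_k$ until the final round.

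The paper handles exactly this issue by first reducing to an auxiliary ``box-limited'' game in which Maker is forced to keep her edges in the box-component of $v_0$, at the price of allowing her to bank unused moves (Proposition~\ref{prop:boxlmtd-unlmted}). Crucially, that reduction is only valid when Breaker confines himself to edges in the bounding box of Maker's graph or on its boundary; this is why the paper's strategy is entirely boundary-based, with carefully placed \emph{gates} rather than off-boundary traps. Your trap edges, being ``just outside $T_k$'' and hence potentially outside the bounding box of Maker's actual play, would violate the hypothesis of any such reduction, so even if you adjoined one you could not invoke it. Beyond this, the proposal is a plan rather than a proof: the phrases ``one should be able to show'' and ``Quantifying this trade-off \dots is the main technical work'' defer precisely the case analysis that drives the constants $14$ and $22$, and without a concrete specification of where the traps go there is no way to verify the trichotomy you assert for them.
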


In particular, this shows that if $\rho^*$ exists,
then $1/2 \leq \rho^* \leq 27/14 \approx 1.93$,
and thus, breaks the perimetric barrier as discussed previously.
We do not believe this bound to be tight,
and moreover, we did not attempt to optimise for this constant,
as we also believe that the current method will not yield
the optimal bound.

\Cref{thm:main1} improves the ratio on Breaker's side for $m \ge 36$, and it also shows that for $m \ge 29$, Breaker wins the $(m,2m)$-game rather fast.
Nonetheless, it is also of interest to determine how powerful Breaker is in the $(m,2m)$-game for smaller values of $m$.
In the proof of \Cref{thm:original}, Day and Falgas-Ravry
show that Breaker can win the $(m,2m)$-game on $\ZZ^2$ within $m^{16m+O(1)}$ rounds, and ask~\cite{day2020maker2}*{Question 5.7} how far this is from best possible.
Concerning this question, we prove a slightly stronger result, showing that Breaker can win fast even when allowing Maker an initial boost in the form of an option to claim some edges before the game starts.

We consider the following variant of the game.
For integers $m,b \ge 1$ and $c \ge 0$ define the \emph{$c$-boosted $(m,b)$ Maker-Breaker percolation game} on $\ZZ^2$ to be the same as the $(m,b)$ percolation game, with the addition that only in her very first turn, Maker claims $c$ extra edges (so overall in her first round she claims $m+c$ edges).
Concerning this game, Day and Falgas-Ravry asked~\cite{day2020maker2}*{Question 5.6} whether Breaker having
a winning strategy for the $(m,b)$-game on $(\Lambda, v_0)$
implies that he also has a winning strategy for the $c$-boosted
version of the same game.

In view of~\cite{day2020maker2}*{Questions 5.6, 5.7}, we prove the following result.

\begin{theorem}
\label{thm:boost}
Let $m \ge 1$ and $c \ge 0$ be integers, and let $b \ge 2m$.
Then Breaker wins the $c$-boosted $(m,b)$ Maker-Breaker percolation game on $\ZZ^2$, and moreover, he can ensure to win within the first $(2c+4)(2c+5)\p[\big]{\ceil{ \frac{2c+2}{m}} + 1}$ rounds.
\end{theorem}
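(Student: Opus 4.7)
My plan is to exhibit an explicit strategy for Breaker centred on the edge perimetric inequality for $\ZZ^2$ (a connected set of $k$ edges has edge-boundary at most $2k+4$), and to analyse it via tracking the number of unclaimed boundary edges of Maker's $v_0$-component. Let $C_t$ denote the connected component of $v_0$ in the subgraph with Maker's edges after round $t$, and let $f_t \defined \abs{\partial C_t \setminus B_t}$, where $B_t$ is Breaker's claimed edge set. The game is won for Breaker as soon as $f_t = 0$, since then $C_t$ is a finite component surrounded by edges of $B_t$.

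First, I handle the initial boost. After Maker's boosted first move, $\abs{E(C_1)} \leq m+c$, so the perimetric inequality gives $\abs{\partial C_1} \leq 2(m+c)+4$. Breaker plays all $b \geq 2m$ of his first-round edges on $\partial C_1$, leaving an \emph{initial deficit} of $f_1 \leq 2c+4$. In each subsequent round, a Maker edge adds at most $2$ to $\abs{\partial C}$ (and hence to $f$) by perimetric, while Breaker's $b \geq 2m$ edges played on the current boundary decrease $f$ by $b$. Thus $f_t \leq 2c+4$ is preserved, which already caps Maker's effective growth: she can only extend $C$ through edges in $\partial C \setminus B_t$, so $C$ grows by at most $\min(m, 2c+4)$ edges per round.

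The heart of the argument is to show that Breaker can drive $f_t$ to zero within the claimed time by aiming for an explicit containment region of dimensions roughly $(2c+4) \times (2c+5)$. Using the deficit control, Maker's cluster can be shown to be confined inside a bounded rectangle around $v_0$; Breaker then systematically seals the perimeter and interior edges of this region. A single \emph{phase} of the strategy corresponds to sealing one layer or one directional ``front'' of the containment region, and takes at most $\lceil (2c+2)/m \rceil + 1$ rounds, which is how long Maker needs to traverse a path of length $2c+2$ at her rate of $m$ edges per round. The total number of phases is bounded by $(2c+4)(2c+5)$, corresponding to the number of cells of the containment region Breaker must deal with, yielding the stated bound by multiplication.

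The main obstacle will be the extremal case $b = 2m$: here the naive potential $f_t$ is non-increasing but need not strictly decrease, since the perimetric bound $2k+4$ is tight for straight-line clusters. To obtain strict progress, the strategy must force Maker's cluster into a compact, non-linear shape whose actual boundary is strictly less than $2\abs{E(C)}+4$, producing a per-phase strict decrease in a refined potential that captures both $f_t$ and the perimetric slack. The bookkeeping of this accumulated slack, combined with the edge budget argument above, closes the proof.
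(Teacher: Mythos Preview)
Your opening observations are correct and match the paper: the perimetric inequality gives $f_1 \leq 2c+4$ after round~1, and the bound $f_t \leq 2c+4$ is maintained thereafter when $b=2m$. However, the proof then breaks down in two places.

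First, you never reduce to a game where Maker plays connected. In the actual game Maker may deposit edges far from $C_t$ and later attach them with a single connecting edge; when that happens, $C_t$ jumps by far more than $\min(m,2c+4)$ edges in one round, so your growth bound is false as stated. The paper handles this by passing to an auxiliary \emph{limited} game in which Maker must keep her graph connected (paying for this by letting her bank unused moves), and proving that a Breaker win there using only boundary edges transfers back to the original game. This reduction is not a technicality you can skip; without it your potential $f_t$ does not control anything.

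Second, and more seriously, the geometric containment claim is wrong. The bound $f_t \leq 2c+4$ does \emph{not} confine Maker to any bounded box: a straight-line cluster achieves equality in the perimetric inequality, so Maker can extend a path indefinitely while $f_t$ stays constant. Thus there is no ``$(2c+4)\times(2c+5)$ rectangle'' to seal, and the phase argument collapses. In the paper the factor $(2c+4)(2c+5)$ has nothing to do with a spatial region; it is the size of the range of a lexicographic potential $(v_k;w_k)$, where $v_k = 2e(C_k)+4-\lvert\partial C_k\rvert$ is the perimetric slack and $w_k$ counts \emph{awful} boundary edges (those whose claiming by Maker raises $\lvert\partial C\rvert$ by at most~1). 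The missing idea is Breaker's priority scheme: classify free boundary edges as good/bad/awful and always claim good before bad before awful. One then shows that within every block of $\lceil(2c+2)/m\rceil+1$ rounds Breaker runs out of good edges at some point, forcing Maker to claim a bad or awful edge, which strictly increases $(v_k;w_k)$. Your final paragraph gestures at a ``refined potential'' but does not supply this mechanism.
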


This theorem tells us that Breaker can not only win the $(m,2m)$-game on $\ZZ^2$ quite fast, and can not only win the $c$-boosted game for any $c$, he can also win quite fast the $c$-boosted game.
Moreover, the number of rounds Breaker needs is uniformly bounded in $m$ and polynomial in $c$.
Thus, for the $(m,2m)$-game, we answer the stronger combined version of Questions 5.6 and 5.7 of \cite{day2020maker2}.

Applying \Cref{thm:boost} with $c = 0$, we get the following extension of \Cref{thm:main1} for the $(m,2m)$-game without an initial boost.

\begin{corollary}
\label{cor:m2mBreaker}
Let $m \geq 1$ and $b \ge 2m$ be integers.
Then Breaker can guarantee to win the $(m,b)$ percolation game on $\ZZ^2$ within the first $40$ rounds of the game.
Moreover, if $m\ge 29$ then Breaker wins within $3$ rounds.
\end{corollary}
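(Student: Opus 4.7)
The plan is a direct deduction from \Cref{thm:main1} and \Cref{thm:boost}: the former handles the case $m \ge 29$ with the $3$-round bound, while the latter, applied with the boost parameter $c = 0$, delivers the uniform $40$-round bound for the remaining values of $m$.

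For the ``moreover'' clause, I apply \Cref{thm:main1} with $s = 0$. The hypothesis $0 \le (m-22)/14$ reduces to $m \ge 22$, which is comfortably satisfied whenever $m \ge 29$, and the conclusion is precisely that Breaker has a winning strategy for the $(m,b)$-game with $b \ge 2m$ that terminates within $3$ rounds.

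For the main bound, I apply \Cref{thm:boost} with $c = 0$. The $0$-boosted $(m,b)$-game is by definition the ordinary $(m,b)$-game, so no bias-monotonicity step is needed. Substituting $c = 0$ into the round-count bound of \Cref{thm:boost} yields
\[
(2 \cdot 0 + 4)(2 \cdot 0 + 5)\left(\left\lceil \tfrac{2}{m}\right\rceil + 1\right) \;=\; 20\left(\left\lceil \tfrac{2}{m}\right\rceil + 1\right),
\]
which equals $40$ for every $m \ge 2$. The only case not covered directly by this computation is $m = 1$, where the raw estimate is $60$; this single borderline value can be brought down to $40$ either by re-examining Breaker's explicit strategy from the proof of \Cref{thm:boost} in the regime $c = 0$, $m = 1$ and noting that the analysis is already sufficiently slack, or by exhibiting a short ad hoc strategy for the $(1,b)$-game with $b \ge 2$ that terminates well within $40$ rounds.

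In summary, both parts of the corollary are plug-and-play applications of the two preceding theorems, and the only real obstacle, if any, is this minor arithmetic sharpening in the $m = 1$ case; no new conceptual idea is required.
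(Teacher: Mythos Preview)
Your approach is exactly the paper's: the corollary is stated as an immediate consequence of \Cref{thm:boost} with $c=0$ together with \Cref{thm:main1} for the $m\ge 29$ clause, and the paper offers no further argument. Your observation that the raw substitution yields $60$ rather than $40$ when $m=1$ is accurate and is a genuine loose end; the paper does not address it either, merely remarking afterwards that ``the proof of \Cref{thm:boost} specialised to $c = 0$ could be greatly simplified,'' so your suggested fixes (sharpening the analysis or an ad hoc $(1,2)$ strategy) are in the same spirit as what the authors leave implicit.
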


Note that this cannot be extended for a win in $3$ rounds for every $m$, as in fact, for $m = 1$, Maker can survive for $5$ rounds.
In the range $1 \leq m \leq 28$, the bound of $40$ we obtain is not optimal, as the proof of \Cref{thm:boost} specialised to $c = 0$ could be greatly simplified.
For $m \geq 29$, Maker can indeed survive for $3$ rounds, as it becomes clear in the proof of \Cref{thm:main1}.

The final subject we address regards the Maker-Breaker percolation game being played on a random board.
It is very common to study Maker-Breaker games on random boards in general.
In this case, however, it is of special interest since it was originally introduced by an analogy to percolation theory.

It is then very natural to consider the variant of the Maker-Breaker percolation game on $(\ZZ^2)_p$, the subgraph of open edges in the usual bond percolation process on $\ZZ^2$ with parameter $p \in [0,1]$.
It is well known that for $p \leq 1/2$, there are no infinite connected components on $(\ZZ^2)_p$, and if $p \in (1/2,1)$, the probability that the origin is in an infinite connected component is strictly between zero and one.
To avoid an easy win of Breaker in round zero, we grant Maker the power to choose the origin $v_0$
after sampling the configuration.
We call this the \emph{Maker-Breaker percolation game on $p$-polluted $\ZZ^2$}.

Day and Falgas-Ravry asked what is a value of the critical threshold $\theta^{*} \in [1/2,1]$ such that when $p < \theta^{*}$, Breaker almost surely has a winning strategy for the $(1,1)$-game on $p$-polluted $\ZZ^2$.
We make a first progress in this direction, providing a non-trivial lower bound for this threshold.
To do so, we explore a connection to a different percolation model, the north-east oriented percolation model on $\ZZ^2$, which is known to have a critical threshold $p^*$ (see \Cref{sec:polluted} for more details about this model and about the percolation game on a $p$-polluted board).

\begin{theorem}
\label{thm:polluted}
  Let $p^{*}$ be a critical threshold for the north-east oriented percolation on $\ZZ^2$.
  Then for any $p < p^{*}$, Breaker almost surely has a winning strategy for the $(1,1)$-game on $p$-polluted $\ZZ^2$.
\end{theorem}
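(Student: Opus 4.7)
The plan is to combine planar duality with the subcriticality of north-east oriented percolation so that Breaker can construct and execute a cutset strategy around any vertex Maker chooses. First I would dispose of the case $p\le 1/2$: standard subcritical bond percolation gives that $(\ZZ^2)_p$ almost surely has only finite components, so no matter which $v_0$ Maker picks she is already cut off from infinity and Breaker wins by doing nothing. The interesting range is therefore $1/2<p<p^{*}$.

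The next step is planar duality. Breaker wins if and only if he eventually claims a set of open primal edges forming a cutset separating $v_0$ from infinity, and such a cutset is dual to a cycle $C^{*}$ in the planar dual of $\ZZ^2$ enclosing $v_0$. The dual cycle has two kinds of edges: duals of closed primal edges, which are \emph{free} (they are not on the board and automatically contribute to the cut), and duals of open primal edges, which Breaker must actively claim. The plan is therefore to locate dual cycles around $v_0$ whose open-primal content is small and highly structured, and to play towards claiming those.

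The key structural step is to relate the existence of such cheap cycles to north-east oriented percolation at parameter $p$. The idea is to work on the oriented dual lattice, declare a dual edge \emph{occupied} iff the corresponding primal edge is open, and run NE-oriented percolation on it. When $p<p^{*}$, almost surely every dual vertex has only a finite NE-oriented occupied cluster, and by the classical planar-duality argument for oriented percolation this yields, almost surely around any fixed dual reference point, an infinite nested family of disjoint dual cycles on which the open primal edges only appear in short ``arcs'' whose total length per cycle is uniformly bounded. Since Maker chooses $v_0$ after seeing the board, the statement has to hold simultaneously for every $v_0$, which I would obtain by proving it at a fixed reference vertex and then transferring it via translation invariance combined with a Borel--Cantelli argument along the countable set of candidate origins.

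Finally, Breaker's winning strategy is a cascade over these nested cycles. He orders them by distance from $v_0$ and commits first to the innermost available cycle $C_1$; on each of his turns he claims the next unclaimed open-primal edge of $C_1$ in some preassigned order. If Maker has already claimed enough edges of $C_1$ to make it unusable, Breaker abandons it and switches to $C_2$, and so on. Comparing how fast Breaker completes a cycle against how many edges of that cycle Maker can ``kill'' in the meantime, and using the exponential decay coming from subcriticality to control the sizes of the open-primal arcs, I expect to conclude that Breaker completes some $C_j$ in finite time almost surely. The hard part will be the third paragraph: turning subcritical NE-oriented percolation into genuine dual cycles around every possible $v_0$ with the quantitative control on their open-primal content that the cascade in the final step relies upon.
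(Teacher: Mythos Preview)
Your approach has a genuine gap, and it is not in the step you flag as ``the hard part'' --- it is in the final step, the cascade strategy itself. Suppose you do manage to produce infinitely many disjoint nested dual cycles $C_1,C_2,\dotsc$, each with at most $K\ge 2$ open primal edges. In the $(1,1)$-game Maker can simply \emph{follow} Breaker: whenever Breaker plays an edge lying on some cycle $C_j$ that Maker has not yet touched, Maker responds on her next move by claiming another open edge of $C_j$. Since each $C_j$ has at least two open edges, Maker can always do this, and so every cycle Breaker ever touches gets blocked. A rate comparison or exponential-decay bound on $K$ does not help here, because the obstruction is purely combinatorial: with equal bias and disjoint winning sets of size $\ge 2$, the player who needs the whole set cannot win. (And you cannot hope for $K\le 1$ either: if $v_0$ lies in the infinite cluster, any dual cycle around it crosses at least one open edge, and infinitely many disjoint cycles each crossing exactly one open edge would force an infinite sequence of nested bridges, which subcritical NE percolation does not provide.)

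The paper's proof takes a completely different route that avoids any race over cutsets. It first observes that for $p<p^{*}$ the configuration is almost surely \emph{barred}: from every vertex, every monotone (two-direction) open path is finite. A short compactness argument (\Cref{lem:inframsey}) upgrades this to the existence of a finite box $B_d(v_0)$ containing all monotone open paths from $v_0$. Breaker then plays a \emph{pairing strategy}: around $v_0$ the paper defines the ``barrier pairing'' (\Cref{def:pairing}), which pairs each non-axial edge with the edge meeting it at right angles, pointing back toward the relevant axis. Breaker's rule inside $B_{d+1}$ is simply to answer any Maker move by claiming its partner. The point is that any path from $v_0$ to the exterior of $B_{d+1}$ cannot be monotone (those are trapped in $B_d$), so it must contain a ``turn'', and at that turn the two edges form a barrier pair --- one of which Breaker has already claimed. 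Thus no Maker path can ever leave $B_{d+1}$, and since $B_{d+1}$ is finite Breaker eventually fills it and wins. The argument is local and uses a single box rather than an infinite family of contours; the NE-percolation input is used only to guarantee the barred property, not to build dual cycles.
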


In particular, as $p^{*} > 0.6298$ by a result of Dhar~\cite{dhar1982directed}, for any $p \le 0.6298$, Breaker almost surely has a winning strategy for $(1,1)$-game on $(\ZZ^2)_p$.
Although considering the critical threshold for the north-east oriented percolation yields a non-trivial result for Breaker's win on the polluted board, this strategy cannot be extended to every $p < 1$, since it was shown by Balister, Bollobás and Stacey~\cite{balister1994improved} that $p^* < 0.6863$.

\subsection{Tools and strategy}
\label{subsec:tools}

Throughout the paper we use two important tools.
The first is relating our game to an auxiliary game, where Maker either has to keep her graph connected, or at least connected in some generalised sense.
However, if we want to claim that it is enough to prove that Breaker wins against a restricted Maker, we have a certain price to pay.
In particular, we consider only strategies of Breaker in which he claims edges from the edge-boundary of Maker's connected component, or a slightly generalised version of that.
More importantly, we enable Maker to `save' some edges for later, to make the auxiliary game resemble the original one.
Despite these changes, this setting ends up being much easier to analyse, as one of the hard things to tackle when considering strategies for Breaker is handling different connected components in Maker's graph.
Furthermore, it turns out that with these adjustments, analysing the auxiliary game is indeed sufficient to prove our results for the original game.

Our second tool is considering variations on \Cref{lem:perimetric} (Lemma 2.3 in \cite{day2020maker1}).
This simple result tells us that the edge-boundary of any connected finite subgraph of $\ZZ^2$ is at most `a bit' larger than twice the number of edges of this subgraph.
Hence, for instance, when playing the $(m,2m)$-game, it is enough to force Maker to play several `bad moves', not enlarging the edge-boundary of her graph by too much, so that Breaker can surround her connected component completely.

When we use a more general notion of connectivity, we need a slightly more general version of \Cref{lem:perimetric}.
This variant allows us to analyse the game in a global sense.
This, in particular, is how we manage to break the perimetric barrier.

Finally, to prove \Cref{thm:polluted}, we only use some simpler tools.
In particular, one of those is a pairing strategy in the same spirit of the one that Day and Falgas-Ravry~\cite{day2020maker2} used in their proof for Maker's win in the $(1,1)$-game, but this time as a strategy for Breaker.

\subsection{Organisation}

Firstly, we present our notation and the definitions we work with in \Cref{sec:preliminary}.
After that, we prove \Cref{thm:main1} in \Cref{sec:ratio} and \Cref{thm:boost} in \Cref{sec:fast-boost}. We discuss the game played on the polluted board and prove \Cref{thm:polluted} in \Cref{sec:polluted}. Finally, we present several open problems and further directions in \Cref{sec:open}.


\section{Preliminaries}
\label{sec:preliminary}

For a graph $G$, we denote by $V(G)$ its vertex set, and by $E(G)$ its edge set. Furthermore, we set $e(G) \defined \card{E(G)}$.
For a subset of vertices $U \subseteq V(G)$ we denote by $G[U]$ the subgraph of $G$ induced by $U$.

The \emph{edge boundary} $\boundary H$ of a finite subgraph $H$ of a possibly infinite graph $G$ is
\begin{equation*}
    \boundary H \defined \set[\big]{ \set{x,y} \in E(G) \setminus E(H) \st  \set{x,y} \cap V(H) \neq \emptyset }.
\end{equation*}
We usually abbreviate `edge boundary' to `boundary', as we do not consider any other type of boundary in this paper.

We use the standard terminology where by the square lattice $\ZZ^2$, we mean the infinite graph with the following vertex and edge sets.
\begin{align*}
  V(\ZZ^2) &\defined \set[\big]{ (x,y) \st x,y \in \ZZ }, \\
  E(\ZZ^2) &\defined \set[\big]{ \set{(x,y), (x',y')} \subseteq\ZZ^2 \st
    \abs{x - x'} + \abs{y - y'} = 1}.
\end{align*}

When considering probability in \Cref{sec:polluted}, we say that an event occurs almost surely (abbreviated a.s.) if it occurs with probability $1$.

\subsection{Useful lemmas}
Throughout the paper, we use several times the following reverse isoperimetric inequality observed by Day and Falgas-Ravry~\cite{day2020maker1}.

\begin{lemma}
\label{lem:perimetric}
  Let $C$ be a finite connected subgraph of $\ZZ^2$, then
  \begin{equation*}
    \card{\boundary C} \leq 2e(C) + 4.
  \end{equation*}
\end{lemma}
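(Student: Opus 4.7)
The plan is to prove this by a double-counting argument on endpoint-edge incidences between vertices of $V(C)$ and edges of $\ZZ^2$, combined with the tree bound for connected graphs.

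First I would fix $C$ and count, in two ways, the quantity
\begin{equation*}
    N \defined \card[\big]{\set{(v, e) \st v \in V(C),\ e \in E(\ZZ^2),\ v \in e}}.
\end{equation*}
Since every vertex of $\ZZ^2$ has degree exactly $4$, we immediately get $N = 4\card{V(C)}$. On the other hand, each edge of $\ZZ^2$ incident to $V(C)$ contributes to $N$ either once or twice, depending on how many of its endpoints lie in $V(C)$. Edges of $E(C)$ all lie in $E(\ZZ^2)$ and have both endpoints in $V(C)$, so they contribute $2e(C)$ in total. Edges in $\partial C$, by definition, have at least one endpoint in $V(C)$, so they contribute at least $\card{\boundary C}$. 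Any remaining edge of $\ZZ^2$ incident to $V(C)$ would have to be an edge with both endpoints in $V(C)$ that is neither in $E(C)$ nor in $\partial C$, but such an edge falls in $\partial C$ by definition. Hence
\begin{equation*}
    2e(C) + \card{\boundary C} \le N = 4\card{V(C)}.
\end{equation*}

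Next I would use the hypothesis that $C$ is connected (and finite), which gives the standard bound $\card{V(C)} \le e(C) + 1$ (with equality iff $C$ is a tree). Plugging this in yields
\begin{equation*}
    \card{\boundary C} \le 4\card{V(C)} - 2e(C) \le 4(e(C) + 1) - 2e(C) = 2e(C) + 4,
\end{equation*}
which is the claimed inequality.

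There is essentially no obstacle here: the main observation is just that $\ZZ^2$ is $4$-regular, and the only subtlety to watch out for is that $\partial C$ in the paper's definition includes edges of $\ZZ^2$ with both endpoints in $V(C)$ but absent from $E(C)$, so one must be careful not to miss or double-count them when splitting edges incident to $V(C)$ into the three classes $E(C)$, $\partial C$, and ``the rest'' (which, as noted, is empty).
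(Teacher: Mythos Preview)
Your proof is correct. The double-counting of vertex--edge incidences is clean: the $4$-regularity of $\ZZ^2$ gives $N = 4\card{V(C)}$, splitting incident edges into $E(C)$ and $\partial C$ gives $N \ge 2e(C) + \card{\partial C}$, and the connectivity bound $\card{V(C)} \le e(C) + 1$ closes the argument.

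The paper itself does not prove this lemma; it merely states it and attributes it to Day and Falgas-Ravry~\cite{day2020maker1} (their Lemma~2.3). So there is no approach in the present paper to compare against. Your argument is the standard one and is essentially what appears in that reference.
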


We present two more versions of this lemma, for which we need some definitions.

\begin{definition}
\label{def:box}
    We say that a finite subgraph $B\subseteq \ZZ^2$ is a \emph{box}, if it is induced by a set of vertices of the form
\begin{align*}
    \set[\big]{ (x,y) \st a \leq x \leq b, \, c \leq y \leq d },
\end{align*}
    for some $a,b,c,d \in \ZZ$.
\end{definition}

\begin{definition}\label{def:bounding-box}
    Let $S \subseteq \ZZ^2$ be a finite set of edges and let $V_S$ be the set vertices in the graph it spans.
    The \emph{bounding box} of $S$, denoted $\bb(S)$, is the minimal box in $\ZZ^2$ containing $S$.
    To spell it out, let
    \begin{align*}
        m_x(S) \defined \min \set[\big]{ x \st (x,y) \in V_S },
        && M_x(S) \defined \max\set[\big]{ x \st (x,y) \in V_S }.
    \end{align*}
    Also, let $m_y(S), M_y(S)$ be defined analogously for the $y$-axis.
    Then $\bb(S)$ is the box induced by the following set of vertices
    \begin{equation*}
        \set[\big]{ (x,y) \st m_x(S) \le x \le M_x(S), \, m_y(S) \le y \le M_y(S) }.
    \end{equation*}
\end{definition}

For a finite subgraph $G$ of $\mathbb{Z}^2$, we write $\bb(G)$ for $\bb(E(G))$.

\begin{figure}
\centering
\begin{tikzpicture}[scale=0.5]
    \tikzstyle{s-helper}=[lightgray, thin];
    \definecolor{s-b-col}{RGB}{215,25,28};
    \tikzstyle{s-b}=[s-b-col, ultra thick];
    \definecolor{s-o-col}{RGB}{253,174,97};
    \tikzstyle{s-o}=[s-o-col, ultra thick];
    \definecolor{s-m-col}{RGB}{171,217,233};
    \tikzstyle{s-m}=[s-m-col, ultra thick];
    \definecolor{s-g-col}{RGB}{220,220,220};
    \tikzstyle{s-g}=[s-g-col, line width=0.1pt];
    \clip (-7.5,-2) rectangle (5.9,7.6);
    \draw[s-m]
        (-7,0.5) to[out=20,in=150] (-3.5,4.2)
        (-7.3,3.6) to[out=35, in=-150] (-1.1,1.1);
    \draw[s-m]
        (-2.1, 7) to[out=45, in=60] (1.7, 3.8)
        (-0.5, 5.7) to[out=75, in=-150] (3.2, 6.7);
    \draw[s-m]
        (1.8, -1.7) to[out=-15, in=160] (5.7, 1.6)
        (3.5, -0.5) to[out=-30, in=-150] (4.8, -1.2);
    \draw[darkgray, densely dashed, thick] (-7.4,0.4) rectangle (-1,4.5);
    \draw[darkgray, densely dashed, thick] (-2.2,3.7) rectangle (3.3,7.5);
    \draw[darkgray, densely dashed, thick] (1.7,-1.9) rectangle (5.8,1.8);
    \draw[black, thick] (-7.4,-1.9) rectangle (5.8,7.5);
\end{tikzpicture}
\caption{Three connected components, their bounding boxes (dashed),
and the bounding box of their box-component (solid).}
\label{fig:bounding-box}
\end{figure}
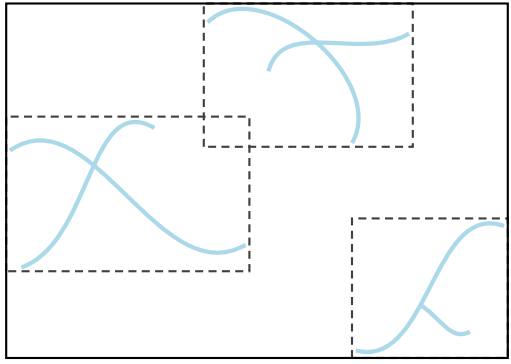

\begin{lemma}\label{lem:perim-bb}
    Let $D$ be a finite connected subgraph of $\ZZ^2$, then
  \begin{equation*}
    \card{\boundary \bb(D)} \le \card{\boundary D}.
  \end{equation*}
\end{lemma}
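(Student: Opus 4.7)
The plan is to compute both sides of the inequality and show it reduces to a row/column counting argument using the connectivity of $D$.

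First, I would compute the right-hand side explicitly. Writing $w \defined M_x(D) - m_x(D)$ and $h \defined M_y(D) - m_y(D)$, the bounding box $\bb(D)$ has $(w+1)(h+1)$ vertices, and every vertex on its geometric boundary contributes either one or two outgoing edges (sides contribute one, the four corners contribute two). Summing, $\card{\boundary \bb(D)} = 2(w+1) + 2(h+1) = 2w + 2h + 4$. So the target reduces to producing at least $2w + 2h + 4$ distinct boundary edges of $D$.

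The key observation, and what makes connectivity enter, is that the projection of $V(D)$ onto each coordinate axis is a contiguous interval of integers: in $\ZZ^2$ an edge changes each coordinate by at most one, so a path between vertices of heights $y_1 < y_2$ must visit every integer height in between. Thus for every $y \in [m_y(D), M_y(D)]$ the row at height $y$ contains at least one vertex of $D$. Let $(\ell_y, y)$ and $(r_y, y)$ be the leftmost and rightmost such vertices. Then $(\ell_y - 1, y) \notin V(D)$ and $(r_y + 1, y) \notin V(D)$ by minimality/maximality, so the horizontal edges
\begin{equation*}
    \set[\big]{(\ell_y - 1, y), (\ell_y, y)} \quad \text{and} \quad \set[\big]{(r_y, y), (r_y + 1, y)}
\end{equation*}
both lie in $\boundary D$. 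These two edges are distinct (even when $\ell_y = r_y$), and edges coming from different rows are distinct since they carry different $y$-coordinates. This yields $2(h+1)$ distinct horizontal boundary edges.

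Applying the symmetric argument column by column, using that the projection onto the $x$-axis is likewise an interval, produces $2(w+1)$ distinct vertical boundary edges, and these are disjoint from the horizontal ones. Adding gives $\card{\boundary D} \ge 2(h+1) + 2(w+1) = \card{\boundary \bb(D)}$, as required. There is no real obstacle here; the only thing to watch is the edge case where a row (or column) consists of a single vertex, but then the leftward and rightward boundary edges are still two distinct edges, so the count is unaffected.
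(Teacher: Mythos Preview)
Your proof is correct and follows essentially the same approach as the paper's: both arguments count, for each row, two horizontal boundary edges (from the leftmost and rightmost vertices) and, for each column, two vertical boundary edges, then compare the total to $2w+2h+4$. Your write-up is in fact more explicit about where connectivity enters (the projection onto each axis being an interval), which the paper leaves implicit. One minor slip: you call $\card{\boundary \bb(D)}$ the ``right-hand side'' when it is the left-hand side of the stated inequality.
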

\begin{proof}
    Let $\boundary D$ consist of $h$ horizontal and $v$ vertical edges, so that $\card{\boundary D}$.
    Let $m_x = m_x(D)$, $M_x = M_x(D)$, $m_y = m_y(D)$, and $M_y = M_y(D)$ be as in \Cref{def:bounding-box}.
    Note that for any $m_x \le x_0 \le M_x$ there are at least two vertical boundary edges in $\boundary D$ of the form $\set{ (x_0, y), (x_0, y+1)}$.
    Analogously, for any $m_y \le y_0 \le M_y$ there are at least two horizontal boundary edges in $\boundary D$ of the form $\set{ (x, y_0), (x+1, y_0)}$.
    In particular we have $h \ge 2(M_y - m_y + 1)$ and $v \ge (M_x - m_x + 1)$.
    Recall that the box $\bb(D)$ has sides of lengths $M_x - m_x$ and $M_y - m_y$, so
    \begin{equation*}
        \card{\boundary \bb(D)} = 2(M_x - m_x + M_y - m_y) + 4,
    \end{equation*}
    and the result follows.
\end{proof}

We define a generalisation of connected component that incorporates the notion of bounding box in \Cref{def:bounding-box}.
For doing so we go through several definitions.

\begin{definition}
\label{def:box-intesect}
    Let $D_1, D_2 \subseteq \ZZ^2$ be boxes.
    We say that $D_1, D_2$ \emph{box-intersect} if
    \begin{equation*}
        \p[\big]{E(D_1) \cup (\boundary D_1)} \cap E(D_2) \neq \emptyset.
    \end{equation*}
\end{definition}

Note that the relation in \Cref{def:box-intesect} is symmetric.

\begin{definition}\label{def:box-comp}
    Let $D \subseteq \ZZ^2$ be a finite subgraph.
    Let $C_1, \ldots C_t$ be the connected components of $D$, and let $\cR \defined \set{\bb(C_1), \ldots, \bb(C_t)}$ be the collection of their bounding boxes.
    As long as possible, repeat the following process.
    If there exist $R_i, R_j \in \cR$ which box-intersect, remove them from $\cR$ and replace them by the bounding box of their union, that is, by $\bb(R_i \cup R_j)$.
    The final $\cR$ obtained in the end of this process is called a collection of \emph{box-components of $D$}.
    If the final $\cR$ contains precisely one box-component, then we say that $D$ is \emph{box-connected}.
\end{definition}

\begin{observation}
    Given a finite subgraph $D \subseteq \ZZ^2$, the output of the process defined in \Cref{def:box-comp} on $D$ is unique.
    Therefore, it induces an equivalence relation on the connected components of $D$.
    Note further that a box-component contains no isolated vertices, unless the component itself is a single vertex.
\end{observation}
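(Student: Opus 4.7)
The plan is to verify the three claims in turn, with uniqueness of the output being the substantive step. For uniqueness, I would apply a confluence argument in the style of Newman's lemma. Termination is immediate, since each merge strictly decreases $\card{\cR}$ by one, so the process stops within $t-1$ steps. It therefore suffices to establish local confluence: whenever two distinct merges are available from the same state $\cR$, the two resulting states can be joined in a bounded number of further steps.

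The key auxiliary lemma is a monotonicity statement: if $R, R'$ are boxes with $R \subseteq R'$, and $R$ box-intersects some third box $R''$, then so does $R'$. I would prove this by checking the inclusion $E(R) \cup \boundary R \subseteq E(R') \cup \boundary R'$, which follows because boxes are induced subgraphs of $\ZZ^2$, so $V(R) \subseteq V(R')$ forces $E(R) \subseteq E(R')$, and any boundary edge of $R$ has an endpoint in $V(R) \subseteq V(R')$ and therefore lies in $E(R') \cup \boundary R'$. The main obstacle is the case when the two available merges share exactly one box, say $A_1 = B_1$ while $A_2 \ne B_2$. Monotonicity then ensures that after merging $A_1, A_2$, the new box $\bb(A_1 \cup A_2)$ still box-intersects $B_2$; symmetrically, $\bb(A_1 \cup B_2)$ still box-intersects $A_2$. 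One further merge in each branch then yields the common state in which $A_1, A_2, B_2$ have all been replaced by $\bb(A_1 \cup A_2 \cup B_2)$, closing the diamond. The remaining cases, in which the two merges are either identical or act on disjoint pairs, are immediate.

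For the second assertion, once the final collection $\cR$ is shown to be uniquely determined, I would declare two connected components $C_i, C_j$ of $D$ equivalent precisely when $\bb(C_i)$ and $\bb(C_j)$ end up inside the same box-component of the final $\cR$. Since $\cR$ partitions the initial collection $\set{\bb(C_1), \ldots, \bb(C_t)}$, this relation is automatically reflexive, symmetric and transitive.

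For the last assertion, a box-component is by construction a box, i.e., the subgraph of $\ZZ^2$ induced on a rectangle of lattice points $\set{(x,y) \st a \le x \le b,\ c \le y \le d}$. If the rectangle contains more than one vertex then $(b-a)+(d-c) \ge 1$, and every vertex in the rectangle has a lattice neighbour in the rectangle, hence an incident edge in the box. Isolated vertices can therefore only arise when $a=b$ and $c=d$, which means the box-component consists of a single vertex.
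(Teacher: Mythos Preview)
The paper states this observation without proof, leaving it to the reader. Your confluence argument via Newman's lemma is a correct and natural way to supply the missing justification: termination is immediate since each merge strictly decreases $\card{\cR}$, and your monotonicity lemma (that $E(R) \cup \boundary R \subseteq E(R') \cup \boundary R'$ whenever $R \subseteq R'$ are boxes) cleanly handles the only nontrivial local-confluence case, where the two candidate merges overlap in exactly one box; the identity $\bb(\bb(A_1 \cup A_2) \cup B_2) = \bb(A_1 \cup A_2 \cup B_2)$, which you use implicitly, holds because $\bb$ is monotone and idempotent. The second and third assertions then follow as you describe.
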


Hence, for each subgraph $D\subseteq \ZZ^2$ we can consider its collection of box-components.
This allows us to state a slightly generalised version of \Cref{lem:perimetric}.

\begin{lemma}\label{lem:perim-box-comp}
    Let $D$ be a finite box-connected subgraph of $\ZZ^2$, then
    \begin{equation*}
        \card{\boundary \bb(D)} \le 2e(D) + 4.
    \end{equation*}
\end{lemma}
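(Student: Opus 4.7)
The plan is to track the quantity $\Sigma(\cR) = \sum_{R \in \cR} \card{\boundary R}$ through the merging algorithm of \Cref{def:box-comp}, and show that each merge decreases $\Sigma$ by at least $4$. Since $D$ is box-connected, the algorithm terminates with $\cR = \set{\bb(D)}$, so the final value of $\Sigma$ equals $\card{\boundary \bb(D)}$, while the initial value can be bounded from above by applying the lemmas we already have componentwise.

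First I would bound the initial value. Letting $C_1, \ldots, C_t$ be the connected components of $D$, applying \Cref{lem:perim-bb} to each $C_i$ and then \Cref{lem:perimetric} gives
\begin{equation*}
\sum_{i=1}^t \card{\boundary \bb(C_i)} \le \sum_{i=1}^t \card{\boundary C_i} \le \sum_{i=1}^t (2e(C_i) + 4) = 2e(D) + 4t.
\end{equation*}

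Next I would analyse a single merge. Suppose boxes $R_1, R_2$ box-intersect and are merged into $R_{\text{new}} = \bb(R_1 \cup R_2)$; write $R_i$ with width $w_i$ and height $h_i$, so that $\card{\boundary R_i} = 2(w_i + h_i) + 4$. The key observation is that box-intersection forces $V(R_1) \cap V(R_2) \ne \emptyset$, because any edge of $E(R_2)$ lying in $E(R_1) \cup \boundary R_1$ has both endpoints in $V(R_2)$ and at least one in $V(R_1)$. Consequently the $x$-projections of $R_1, R_2$ overlap, as do their $y$-projections, from which a short interval calculation yields that $R_{\text{new}}$ has width $W \le w_1 + w_2$ and height $H \le h_1 + h_2$. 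Hence
\begin{equation*}
\card{\boundary R_{\text{new}}} = 2(W + H) + 4 \le \card{\boundary R_1} + \card{\boundary R_2} - 4.
\end{equation*}
A degenerate case arises when some $R_i$ is a single vertex: box-intersection then forces that vertex into the other box, $R_{\text{new}}$ coincides with the other box, and the saving is exactly $\card{\boundary(\text{vertex})} = 4$.

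To finish, since $\cR$ shrinks from $t$ boxes down to $1$, the process performs $t - 1$ merges, saving at least $4(t-1)$ in total. Combined with the initial bound this yields
\begin{equation*}
\card{\boundary \bb(D)} \le (2e(D) + 4t) - 4(t-1) = 2e(D) + 4,
\end{equation*}
as claimed. The main obstacle is really just the merge step above: once one verifies that box-intersection forces overlapping one-dimensional projections (and handles the single-vertex degeneracy), the rest is a telescoping argument and routine interval arithmetic.
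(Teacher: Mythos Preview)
Your argument is correct and has the same skeleton as the paper's proof: both proceed through the merging process of \Cref{def:box-comp} and show that each merge of two box-intersecting boxes $R_1, R_2$ gives $\card{\boundary \bb(R_1 \cup R_2)} \le \card{\boundary R_1} + \card{\boundary R_2} - 4$, which telescopes over the $t-1$ merges to cancel the extra $4(t-1)$ from the componentwise initial bound.

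Where you differ is in justifying the merge inequality. The paper splits into the containment case and otherwise argues by edge-counting (at least one edge of $\partial R_1$ lies in $E(R_2)$ and, by a geometric observation, at least three edges of $\partial R_2$ lie in $E(R_1) \cup \partial R_1$), then applies \Cref{lem:perim-bb} once more to pass from $R_1 \cup R_2$ to its bounding box. You instead note that box-intersection forces a shared vertex, hence overlapping one-dimensional projections, and bound the width and height of $\bb(R_1 \cup R_2)$ directly by $w_1+w_2$ and $h_1+h_2$. Your route is a little cleaner: it handles the containment case without a separate split and avoids the second appeal to \Cref{lem:perim-bb}.
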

\begin{proof}
    If $D$ is connected, then the result follows immediately from combining \Cref{lem:perim-bb} and \Cref{lem:perimetric}.

    Otherwise, it is enough to prove the statement for the case where is $D$ a union of two graphs, $C_1, C_2$, for which the statement holds for both, and where $\bb(C_1), \bb(C_2)$ box-intersect.
    Indeed, we then apply this repeatedly for each step in the process defined in \Cref{def:box-comp}, which ends in a single box-component for $D$.
    Denote $R_i \defined \bb (C_i)$ for $i=1,2$, so by assumption we have
    \begin{equation}
    \label{eq:assump}
        \card{\boundary R_i} \le 2e(C_i) + 4.
    \end{equation}
    As $R_1, R_2$ box-intersect, we have that
    \begin{align*}
        \p[\big]{E(R_1) \cup (\boundary R_1)} \cap E(R_2) \neq \emptyset.
,    \end{align*}
    If either $R_1 \subseteq R_2$ or $R_2 \subseteq R_1$ then we either have
    \begin{equation*}
        \card{\boundary \bb(D)} = \card{\boundary R_1} \le 2e(C_1) + 4 < 2e(D) + 4,
    \end{equation*}
    or a similar relation holds with $R_1$ replaced by $R_2$.

    Otherwise, we have $\card{(\boundary R_1) \cap E(R_2)} \ge 1$ and $R_2 \nsubseteq R_1$.
    It follows by simple geometric observations that $\card[\big]{ \p[\big]{E(R_1) \cup (\boundary R_1)} \cap (\boundary R_2)} \ge 3$.
    In particular,
    \begin{equation*}
        \card{\boundary(R_1 \cup R_2)} \le \card{\boundary R_1} + \card{\boundary R_2} - 4.
    \end{equation*}
    Consequently,
    \begin{align*}
        \card{\boundary \bb(D)} &= \card{ \boundary \bb(C_1 \cup C_2)} \\
        &= \card{\boundary \bb(R_1 \cup R_2)} \\
        &\le \card{\boundary(R_1 \cup R_2)} && \p{\text{By \Cref{lem:perim-bb}}} \\
        &\le \card{\boundary R_1} + \card{\boundary R_2} - 4 \\
        &\le 2e(C_1) + 2e(C_2) + 4 && \p{\text{By (\ref{eq:assump})}} \nonumber \\
        &= 2e(D) + 4. \qedhere
    \end{align*}
\end{proof}

\begin{remark}
\label{rem:BoxBdryRec}
    Note that the edge boundary $\boundary B$ of any box $B \subseteq \ZZ^2$, when regarded as a set of dual edges, forms a rectangle.
\end{remark}


\section{Breaking the perimetric ratio}
\label{sec:ratio}

In this section, we prove \Cref{thm:main1}.
First, let us recall that by bias monotonicity, it is enough to prove \Cref{thm:main1} for $b=2m-s$, as having more power cannot harm Breaker.
As a first step, we describe an auxiliary game for which we show that a win of Breaker in this game implies a win of him in the original game.
After that, we provide Breaker with an explicit strategy.
The rest of this section is devoted to showing that Breaker, by following the suggested strategy, wins the auxiliary game within three rounds.
We prove this by a geometric analysis, relying crucially on the introduced notion of box-connectivity and our tools from \Cref{sec:preliminary}.

If Breaker plays only on the boundary, it is natural to arrive at the perimetric barrier of the ratio $2$, because of \Cref{lem:perimetric}.
More precisely, when Breaker only claims edges from the boundary of Maker's graph, he cannot react to her future moves in advance.
That is, in each turn, Maker is able to create as many new unclaimed boundary edges as possible, to which Breaker must respond.
To get around this, it is helpful for Breaker to consider the global structure of Maker's graph.
Indeed, in general terms, one could interpret our strategy as Breaker forcing Maker to claim edges in an already played region of the board.
This extra power from previous turns will lead to the improvement on the ratio.

When analysing the game from the point of view of Breaker, we wish to consider the graph of Maker as being always box-connected.
Hence, we define the auxiliary game where we consider the box-component of Maker's graph containing the origin as her graph in each round.
Consequently, we allow more flexibility in the number of edges that she can claim in each turn.
Also, when defining Breaker's strategy later, we must insist that he can only play in a certain way for a result about auxiliary game to translate into the result about the original game.

\begin{definition}[$(m,b)$ Maker-Breaker box-limited percolation game on $\ZZ^2$]
\label{def:box-lntd-game}
    Two players, Maker and Breaker alternate claiming
    yet unclaimed edges of a board $\ZZ^2$,
    starting in round 1 with Maker going first.
    \begin{itemize}
        \item In round $i$, Maker chooses a non-negative integer $m_i$ such that for every $i$,
            \begin{equation}
            \label{eq:misum}
                \sum_{j=1}^i m_j \leq im,
            \end{equation}
        and then claims $m_i$ unclaimed edges from $E(\ZZ^2)$.
        Moreover, Maker must play in a way that in the end of each of her turns, her edges must be in the box-component of $v_0$ (see \Cref{def:box-comp}).
        \item In each round, Breaker claims at most $b$ unclaimed edges.
        \item Breaker wins if the connected component of $v_0$ in the graph formed by Maker's edges and all unclaimed edges becomes finite.
        If Maker can ensure that this never happens, then she wins.
    \end{itemize}
\end{definition}

A key result for us is the following proposition relating the two games.

\begin{proposition}
\label{prop:boxlmtd-unlmted}
    Let $m, b \geq 1$ be integers.
    Assume that Breaker can ensure his win in the $(m,b)$ box-limited percolation game on $\ZZ^2$ within the first $k$ rounds by claiming only edges from the boundary of the bounding box of Maker's graph, or from inside the box itself.
    Then he can also ensure his win in the $(m,b)$ percolation game on $\ZZ^2$ within the first $k$ rounds.
\end{proposition}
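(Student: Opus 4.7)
The plan is to let Original Breaker, when playing the $(m,b)$ percolation game on $\ZZ^2$, simulate a play of the box-limited game in which he uses his assumed winning strategy, and copy the resulting moves. Write $M_i$ for the graph of edges claimed by Original Maker after round $i$, and let $M_i'$ denote the box-component of $v_0$ in $M_i$, with bounding box $\bb(M_i')$.

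First I would set up the simulation on Maker's side. For each round $i$, declare the virtual Maker's $i$-th move to be $E(M_i') \setminus E(M_{i-1}')$, and set $m_i$ to be its size. This is a legal play in the box-limited game: adding edges to $M_{i-1}$ can only enlarge the box-component of $v_0$ (bounding boxes of connected components only grow, and existing ones may only merge with new ones), so $m_i \ge 0$; the partial sum $\sum_{j \le i} m_j = \card{E(M_i')} \le \card{E(M_i)} = im$ satisfies the bound in (\ref{eq:misum}); and after the virtual Maker's move in round $i$, her graph is exactly $M_i'$, which is box-connected and contains $v_0$ by definition. Original Breaker then runs the assumed winning strategy for the virtual Breaker against this virtual Maker, and plays in the original game the same edges the virtual Breaker would.

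For the simulation to be legitimate, I need to verify that the edges the virtual Breaker wishes to claim are still unclaimed in the original game. Let $B_{i-1}$ denote the common set of Breaker's edges after round $i-1$. The virtual Breaker plays only edges of $E(\bb(M_i')) \cup \boundary \bb(M_i')$ that are unclaimed in the virtual game, i.e., outside $E(M_i') \cup B_{i-1}$, so the only potentially conflicting edges are those in $E(M_i) \setminus E(M_i')$. The key geometric step, and the one I expect to be the main obstacle, is to show that every edge of $E(M_i)$ lying in $E(\bb(M_i')) \cup \boundary \bb(M_i')$ already belongs to $E(M_i')$. Take such an edge $e$ and let $C$ be the connected component of $M_i$ containing it; both endpoints of $e$ lie in $V(\bb(C))$, so $e \in E(\bb(C))$. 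This edge $e$ then witnesses the box-intersection of $\bb(C)$ with $\bb(M_i')$: directly through a shared edge in $E(\bb(M_i'))$ if $e$ is inside the box, or through an edge of $E(\bb(C))$ lying in $\boundary \bb(M_i')$ otherwise. Hence $C$ is part of the box-component of $v_0$, so $E(C) \subseteq E(M_i')$ and $e \in E(M_i')$, as required.

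It remains to transfer a virtual win into a win in the original game. In either game, Breaker wins as soon as the connected component of $v_0$ in $E(\ZZ^2) \setminus B_j$ becomes finite, since this set coincides with the union of Maker's edges and the unclaimed edges in both cases. By construction, $B_j$ is identical in the two games for every $j$, so the two winning conditions coincide round by round. In particular, if the virtual Breaker's strategy wins within the first $k$ rounds, so does the resulting strategy for Original Breaker, completing the proof.
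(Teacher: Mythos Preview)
Your proof is correct and complete, but it takes the dual route to the paper's argument. The paper argues by contrapositive: if Maker can survive $k$ rounds in the unrestricted game, she can do so in the box-limited game by following her unrestricted strategy and postponing (marking as ``imaginary'') any edge that would land outside the current box-component of $v_0$, claiming it later once it joins that component. The restriction on Breaker guarantees he never steals an imaginary edge in the meantime. You instead argue directly, letting Breaker in the unrestricted game run his box-limited strategy against a virtual Maker whose graph is the box-component $M_i'$ of the real Maker's graph.

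Both routes rest on the same geometric core: an edge of $M_i$ lying in $E(\bb(M_i')) \cup \partial\bb(M_i')$ must already belong to $M_i'$, since its component would otherwise box-intersect $\bb(M_i')$. You spell this out carefully, while the paper invokes it in one line. Your approach additionally requires the monotonicity $E(M_{i-1}') \subseteq E(M_i')$ to make the virtual Maker's moves well-defined; the paper's contrapositive hides this inside the phrase ``the first time she plays some edge that puts $e$ in $M$'', which tacitly assumes the same thing. Your final observation that the winning condition depends only on $B_j$, which is identical in both games, is a clean way to transfer the conclusion and is arguably more transparent than the paper's closing ``the result follows''.
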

\begin{proof}
We show that if Maker has a strategy to ensure that Breaker will not win within the first $k$ rounds of the $(m,b)$ percolation game,
then she can also ensure that Breaker will not win
within the first $k$ rounds of the box-limited percolation game,
assuming that Breaker claims only edges from the boundary of the bounding box of her graph, or from inside it.

Assume that Maker has such a strategy for the (unlimited) percolation game.
Then she can win the box-limited game by playing as follows.
Denote by $M$ the box-component spanned by the edges claimed by Maker.
Maker follows her winning strategy for the percolation game, and whenever this includes playing some edge $e$ that after the end of her turn would be in a box-component which is not $M$, she only marks this edge as an \emph{imaginary} edge and does not play it in that round.
However, she claims an imaginary edge $e$ right after the first time she plays some edge that puts $e$ in $M$.

Firstly, Maker can afford saving imaginary edges to claim later in the game, as the terms $m_j$ only have to satisfy $\sum_{i=1}^i m_j \leq im$ for any $i \geq 1$.
Furthermore, Breaker cannot claim an imaginary edge before Maker claims it, as we assume that Breaker wins by claiming only edges in $(\boundary M) \cup E(M)$.
The result follows.
\end{proof}

Now consider the \emph{$(m, 2m - s)$ Maker-Breaker box-limited percolation game} on $\ZZ^2$, where $m \geq 36$ and $1 \leq s \leq \frac{m - 22}{14}$.

We provide Breaker with the following strategy.

\begin{strategy}[Breaker's strategy for the $(m,b)$ box-limited percolation game on $\ZZ^2$]
\label{str:B-box-lmtd}
For any $i \geq 1$, let $M_i$ be the set of edges claimed by Maker in her $i$-th turn.
Breaker plays according to the following steps.
If at any point of the game Breaker cannot follow any particular step, he forfeits the game.
\begin{description}[labelindent=\parindent, leftmargin=2\parindent]
    \item[First round] \hfill\\
    Set $B_1 \defined \bb(M_1)$.
    \begin{enumerate}
        \item[(1)]
        If $\card{\boundary B_1} \leq 2m-s$, claim all edges in $\boundary B_1$.

        \item[(2)]
        Otherwise, let $g_1 \defined \card{\boundary B_1}- 2m+s$.
        Claim $2m - s$ edges from $\boundary B_1$, leaving $g_1$ unclaimed boundary-edges in the middle (up to being possibly shifted by one edge, for parity reasons) of one of the longer sides of the box $B_1$.
        Denote by $G_1$ this set of $g_1$ unclaimed edges.
    \end{enumerate}

    \item[Second round] \hfill
    \begin{enumerate}
        \item[(1)]
        If $M_2 \cap G_1 = \emptyset$, then claim all edges in $G_1$ if possible, or forfeit if not possible.

        \item[(2)]
        Otherwise, $M_2 \cap G_1 \neq \emptyset$.
        Let $V_1$ be the set of vertices in $\ZZ^2 \setminus B_1$ which are contained in edges of $G_1$.
        Let $P_1 \defined E \p*{ \ZZ^2[V_1] }$ be the set of edges in the path induced by the vertices $V_1$.
        Let $C_1 \defined E(B_1) \cup \boundary B_1$
        and $B_2 \defined \bb((M_2 \cup P_1) \setminus C_1)$.

        \item[(2.1)]
        If $\card{(\boundary B_2) \setminus C_1} \le 2m-s$, claim all edges in $(\boundary B_2) \setminus C_1$.

        \item[(2.2)]
        Otherwise, let $g_2 \defined \card{(\boundary B_2) \setminus C_1} - 2m + s$.
        As $G_1$ is a set of boundary-edges in the middle of one of the longer sides of $B_1$, it splits the boundary edges adjacent to this side into two sets of consecutive boundary-edges.
        Denote these two sets by $L_1$ and $R_1$, such that $\card{R_1 \cap (\boundary B_2)} \leq \card{L_1 \cap (\boundary B_2 )}$.
        Let $e$ be an edge in $(\boundary B_2) \setminus C_1$ of minimal distance to $G_1$.
        Let $G_2$ be $g_2$ consecutive edges in $(\boundary B_2) \setminus C_1$, starting from $e$ (see \Cref{fig:round2} for an illustration).
        Claim all $2m-s$ edges in $(\boundary B_2) \setminus C_1$ excluding those edges in $G_2$.
    \end{enumerate}

    \item[Third round] \hfill
    \begin{enumerate}
        \item If there is a set of at most $2m-s$ unclaimed edges such that claiming them ensures a win in this round, claim all edges in this set.
        \item Otherwise, forfeit.
    \end{enumerate}
\end{description}
\end{strategy}

While the description of the strategy may seem complicated at first, it is in fact very simple.
For illustrations of the geometric content of this strategy, see \Cref{fig:round1,fig:round2,,fig:round3}.

We now show that \Cref{str:B-box-lmtd} is enough to break the perimetric barrier for the box-limited game.
Note that when using \Cref{str:B-box-lmtd}, Breaker only claims edges from the bounding box of Maker's graph or from its boundary.
Hence, combining \Cref{prop:boxlmtd-unlmted} with the following proposition gives us \Cref{thm:main1}.

\begin{proposition}\label{prop:B-box-lmtd}
    Let $m \ge 36$ and $s \le \tfrac{m-22}{14}$.
    Then by following \Cref{str:B-box-lmtd}, Breaker wins the $(m, 2m - s)$ box-limited percolation game on $\ZZ^2$ in at most three rounds.
\end{proposition}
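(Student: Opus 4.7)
I would verify round by round that Breaker following \Cref{str:B-box-lmtd} never has to forfeit and that the position after round 3 is winning. The main tool throughout is \Cref{lem:perim-box-comp}, applied each time to the appropriate box-connected piece of Maker's graph.

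In round 1, the edges $M_1$ are box-connected, as they lie in the single box-component of $v_0$, so \Cref{lem:perim-box-comp} gives $\card{\boundary B_1} \le 2 e(M_1) + 4 \le 2m + 4$. In case~(1) this lets Breaker enclose $v_0$ immediately. In case~(2) the gap has size $g_1 = \card{\boundary B_1} - (2m-s) \le s + 4$, which is at most $2m - s$ under the hypothesis $s \le (m-22)/14$. In round~2, case~(1), Breaker simply claims $G_1$ and wins. The delicate case is round~2, case~(2). Here the crucial geometric claim is that $(M_2 \cup P_1) \setminus C_1$ is box-connected: $P_1$ is a path just outside the long side of $B_1$ containing $G_1$, Maker's move in $G_1$ anchors a component of $M_2 \setminus C_1$ to a vertex of $V_1 \subseteq V(P_1)$, and every other component of $M_2 \setminus C_1$ must box-intersect $B_1$ (since $M_1 \cup M_2$ is box-connected). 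Because $P_1$ lies immediately adjacent to the relevant side of $B_1$ and spans the full $x$-range of $G_1$, such box-intersections with $B_1$ can be transferred to box-intersections with $P_1$ after the edges in $C_1$ are removed.

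Granted box-connectivity, \Cref{lem:perim-box-comp} yields $\card{\boundary B_2} \le 2(m_2 + g_1 - 1) + 4$, and a short piece of geometric bookkeeping (using $V(B_1) \cap V(B_2) = \emptyset$ and the fact that $B_2$ is flush against the chosen long side of $B_1$) shows that the entire side of $B_2$ adjacent to $B_1$ sits inside $C_1$, absorbing at least $w + 3$ edges, where $w$ is the width of $B_2$ along that side. Combined with $m_1 + m_2 \le 2m$ and $g_1 \le s + 4$, this gives a bound of the form $\card{(\boundary B_2) \setminus C_1} \le (2m - s) + g_2$, with $g_2$ a small explicit linear function of $s$, so Breaker can execute step~(2.2); the residual gap $G_2$ is a single short segment on one side of $B_2$. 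In round~3, any move of Maker that keeps $v_0$ connected to infinity must go through $G_2$, and iterating the same box-connectivity and perimetric arguments shows that the new unclaimed boundary has size at most $2 m_3 + c(s)$ for an explicit linear $c$. The hypothesis $s \le (m-22)/14$ is precisely calibrated so that this fits inside Breaker's budget of $2m - s$, allowing him to enclose $v_0$ and win.

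The main obstacle will be the box-connectivity claim for $(M_2 \cup P_1) \setminus C_1$ and its round-3 analogue: they require upgrading a box-intersection with $B_1$ to a box-intersection with a substitute structure after $C_1$ is deleted, and this uses crucially the placement of $G_1$ in the middle of a longer side of $B_1$ together with the fact that $P_1$ spans the full range of $G_1$. Once this is in place, the constants $22$ and $14$ fall out of the $+4$ corner terms in \Cref{lem:perim-box-comp} and the $g_1 - 1$ extra edges of $P_1$; they are presumably far from tight, consistent with the authors' remark that the current strategy is not optimised.
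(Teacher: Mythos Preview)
Your plan has the right outline but contains two genuine gaps.

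\textbf{Round 2, box-connectivity.} The claim that $(M_2 \cup P_1)\setminus C_1$ is box-connected is not correct as stated, and your justification does not go through. It is not true that every connected component of $M_2\setminus C_1$ must box-intersect $B_1$: box-connectivity of $M_1\cup M_2$ only says the iterated bounding-box process terminates in a single box, so a component of $M_2$ may reach $B_1$ only via a chain of other $M_2$-components. Even if a component does box-intersect $B_1$, it need not box-intersect $P_1$, since $P_1$ only spans the $x$-range of the short gate $G_1$, not the full long side of $B_1$. The paper sidesteps this entirely: it simply restricts attention to the box-component of $P_1$ inside $(M_2\cup P_1)\setminus C_1$ and ``returns'' all other edges to Maker for later rounds (she can afford this under the budget constraint $\sum m_j\le im$, and Breaker never touches them since he plays only on $\partial B_2\cup B_2$). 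You should adopt this device rather than try to prove box-connectivity.

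\textbf{Round 2, the stripe constraint, and Round 3.} You assert that $B_2$ is ``flush against the chosen long side of $B_1$'', but this has to be \emph{proved}: a priori Maker could push $B_2$ sideways past the short sides of $B_1$. The paper rules this out by a quantitative argument using the centering of $G_1$ (so that each flank $L_1,R_1$ has length $c_1\ge \tfrac14 m-\tfrac12 s-2$) together with the bound $|(\partial B_2)\cap C_1|\le 2g_1+s-1+2(m_2-m)$; the hypothesis $s\le(m-22)/14$ enters precisely here. Once this stripe property is in hand, Round 3 is \emph{not} a mere iteration of Round~2. The paper introduces a new auxiliary box $A$ (bounded by the right side of $B_2$ and by $R_1\setminus\partial B_2$), splits into three cases according to whether $M_3$ meets $G_2$ and whether it escapes $A$, and in the hardest case bounds the boundary of $A'\cup Q_1\cup\dotsb\cup Q_t$ (with the $Q_i$ bounding boxes of ``snakes'' leaving $A'$). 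The numerical constraint $s\le(m-22)/14$ is used again in a separate technical claim comparing $\min\{|R_1\setminus\partial B_2|,\,b_2-g_2\}$ with $g_2+s+2(m_3-m)$. Your one-line ``iterate the same arguments'' does not capture this structure and would not by itself produce the constants $14$ and $22$.
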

\begin{proof}
We analyse the game by following \Cref{str:B-box-lmtd} step by step, showing that Breaker can indeed follow it without forfeiting at any point, and thus to win the game by the end of the third round.
In fact, we show that by the end of the third round, Breaker claims all edges in the boundary of the bounding box of Maker's graph, or a subgraph of it containing the origin.

Note that if during the game, Breaker grants Maker with extra edges and wins when playing as if she claimed them, then he also wins the games without granting her those edges.
We will use this assumption as it simplifies the analysis of the game.

Recall that for each $j \ge 1$, we denote by $M_j$ the set of edges that Maker claimed in her $j$-th turn.
Let $m_j \defined \card{M_j}$, so we have $\sum_{j=1}^i m_j \le im$ for any $i \ge 1$.

Refer to \Cref{fig:round1,fig:round2,,fig:round3} for a representation of Rounds 1, 2 and 3 respectively.
We use in several points of this analysis that the game is invariant under translations, rotations by $\pi/4$ angles and horizontal and vertical reflections.

\subsection*{First Round}

Maker plays all her $m_1$ edges $M_1$ in a box-component containing the origin.
Set $B_1 \defined \bb(M_1)$, and let $a_1$ and $b_1$ be the number of vertices in the sides of $B_1$, with $a_1 \geq b_1$.
Assume, without loss of generality, that the top and bottom sides of $B_1$ are at least as large as the left and right ones, that is, they consists of $a_1$ vertices.
Note that as $\card{\boundary B_1} = 2a_1 + 2b_1$, we get
\begin{equation}
\label{eq:a1}
    a_1 \ge \tfrac{1}{4} \card{\boundary B_1}.
\end{equation}

\begin{description}[leftmargin=0pt]
    \item[Step (1)]
    If $\card{\boundary B_1} \le 2m-s$, then by claiming all edges in $\boundary B_1$, Breaker surrounds Maker's graph and wins the game.

    \item[Step (2)]
    Assume otherwise, so we have
    \begin{equation}
    \label{eq:B1bdrylwr}
        \card{\boundary B_1} \ge 2m - s + 1.
    \end{equation}
    Moreover, by \Cref{lem:perim-box-comp} we get
    \begin{equation}
    \label{eq:B1bdryupr}
        \card{\boundary B_1} \le 2m_1 + 4,
    \end{equation}
    so in particular,
    \begin{equation}
    \label{eq:m1lwr}
        m_1 \ge m - \tfrac{1}{2}s - \tfrac{3}{2}.
    \end{equation}
    Denote
    \begin{equation}
    \label{eq:g1}
        g_1 \defined \card{\boundary B_1} - 2m + s.
    \end{equation}
    Breaker chooses $g_1$ boundary-edges in the middle (up to being possibly shifted by one edge, for parity reasons) of the bottom side of $\boundary B_1$, and denotes them by $G_1$.
    We refer to this set of edges as the `gate' for the first round, see \Cref{fig:round1}.
    Then Breaker claims all edges in $(\boundary B_1) \setminus G_1$.

    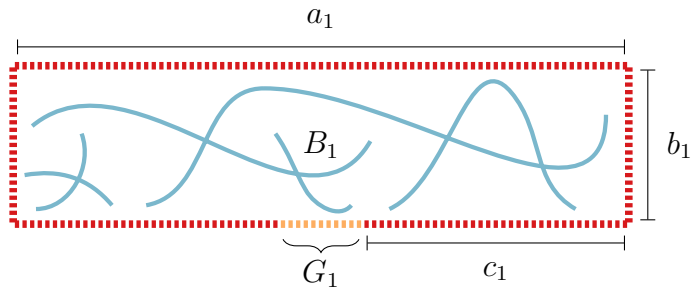
\begin{figure}[ht!]
    \centering
    \begin{tikzpicture}[scale=0.5]
        \tikzstyle{s-helper}=[lightgray, thin];
        \definecolor{s-b-col}{RGB}{215,25,28};
        \tikzstyle{s-b}=[s-b-col, ultra thick];
        \definecolor{s-o-col}{RGB}{253,174,97};
        \tikzstyle{s-o}=[s-o-col, ultra thick];
        \definecolor{s-m-col}{RGB}{122, 183, 204};
        \tikzstyle{s-m}=[s-m-col, ultra thick];
        \definecolor{s-g-col}{RGB}{220,220,220};
        \tikzstyle{s-g}=[s-g-col, line width=0.1pt];
        \clip
            (-10,-2) rectangle (10, 6);
        \foreach \x in {-8,-7.8,...,-1.2}
            \filldraw[xshift=\x cm, s-b] (0,0) -- (0,-0.2);
        \foreach \x in {1.2,1.4,...,8}
            \filldraw[xshift=\x cm, s-b] (0,0) -- (0,-0.2);
        \foreach \x in {-8,-7.8,...,8}
            \filldraw[xshift=\x cm, yshift=4cm, s-b] (0,0) -- (0,0.2);
        \foreach \y in {0,0.2,...,4}
            \filldraw[xshift=-8cm, yshift=\y cm, s-b] (0,0) -- (-0.2,0);
        \foreach \y in {0,0.2,...,4}
            \filldraw[xshift=8cm, yshift=\y cm, s-b] (0,0) -- (0.2,0);
        \draw[draw opacity=0] (-0.5,1.5) rectangle (0.5,2.5)
            node[pos=.5, black] {$B_1$};
        \draw[s-m]
        (1.8,0.3) to[out=25, in=130] (5,3.4)
        (5,3.4) to[out=-50, in=150] (6.7,0.3)
        (-7.6,2.5) to[out=40, in=-120] (1.3,2.1)
        (-4.6,0.4) to[out=10, in=180] (-1.5,3.5)
        (-1.5,3.5) to[out=0, in=-90] (7.5, 2.8)
        (-1.2,2.3) to[out=-55, in=120] (-0.4,0.8)
        (-0.4,0.8) to[out=-60, in=-130] (0.8,0.4)
        (-7.8,1.2) to[out=10, in=130] (-5.5,0.4)
        (-7.5,0.3) to[out=0, in=-70] (-6.3, 2.3);
        \foreach \x in {-1,-0.8,...,0.8,1}
            \filldraw[xshift=\x cm, s-o] (0,0) -- (0,-0.2);
        \draw[decorate,decoration={brace,amplitude=5pt, mirror}]
            (-1,-0.5) -- (1,-0.5) node [midway,below=4pt] {$G_1$};
        \draw [|-|] (-8,4.6) -- (8,4.6) node [midway,above=3pt] {$a_1$};
        \draw [|-|] (8.6,0) -- (8.6,4) node [midway,right=3pt] {$b_1$};
        \draw [|-|] (1.2,-0.6) -- (8,-0.6) node [midway,below=3pt] {$c_1$};
    \end{tikzpicture}
    \caption{End of Round 1, where Maker is in light blue and Breaker in dark red. The set $G_1$ of $g_1$ edges in the gate, in orange, is unclaimed.}
    \label{fig:round1}
    \end{figure}

    This is possible because the bottom side of $\boundary B_1$ contains at least $g_1$ edges, as we observe below.
    \begin{align}
    \label{eq:g1uprs}
        g_1 &\leq s + 4 - 2(m - m_1) \leq s + 4
        && \p*{\text{By (\ref{eq:g1}), (\ref{eq:B1bdryupr}), and $m_1  \leq m$}} \\
        &\leq \frac{1}{4}\p{2m - s + 1} \leq \frac{1}{4}\card{\boundary B_1}
        && \p*{\text{As $s \leq \tfrac{m-22}{14}$, $m \geq 29$ and (\ref{eq:B1bdrylwr})}} \nonumber\\
    \label{eq:g1upr}
        &\leq a_1.
        && \p*{\text{By (\ref{eq:a1})}}
    \end{align}

    Assume further, without loss of generality, that the box is fully contained in the top half-plane and that the origin $(0,0)$ is as close as possible to the centre of the bottom side of the box $B_1$. In particular, it is also in the centre of the gate $G_1$.
\end{description}

\subsection*{Second Round}

First note that by (\ref{eq:g1upr}),
we have $\card{G_1} = g_1 \le a_1 \le m+1 \le 2m - s$.
\begin{description}[leftmargin=0pt]
    \item[Step (1)]
    If $M_2 \cap G_1 = \emptyset$, then by claiming all at most $2m - s$ edges in $G_1$, Breaker surrounds $B_1$ completely and thus wins the game.

    \item[Step (2)]
    Otherwise, we have $M_2 \cap G_1 \neq \emptyset$.
    Let $P_1$, $L_1$ and $R_1$ be as in \Cref{str:B-box-lmtd}, and assume without loss of generality that $L_1$ and $R_1$ are sets of boundary-edges to the left and to the right of $G_1$, respectively.

    Following the strategy, we set $C_1 \defined E(B_1) \cup \boundary B_1$.
    In fact, Breaker can regard the edges in $B_1 \cup G_1$ as being played by Maker, and thus, regard $C_1$ as the set of already claimed edges.
    Note that as $M_2 \cap G_1 \neq \emptyset$, we get $M_2 \cap C_1 \neq \emptyset$.

    Consider the graph spanned by the set of edges $M'_2 \defined (M_2 \cup P_1) \setminus C_1$.
    As this is a proper subset of $M_2 \cup P_1$, it might not be box-connected.
    Consider its box-component containing $P_1$, and assume without loss of generality that it is $M'_2$ itself, as otherwise it has only less edges.
    Indeed, we can `return' the edges outside the box-component of $P_1$ back to Maker, since they will not be claimed by Breaker in this turn and Maker can reclaim them immediately in the next turn if she wishes to do so.

    Denote by $B_2 \defined \bb(M'_2)$ the new bounding box.
    Let $a_2$ and $b_2$ be the number of vertices in the top and bottom sides and in the left and right sides of $B_2$, respectively.
    Since $M_2 \cap G_1 \neq \emptyset$, we have $\card{M_2 \setminus C_1} \le m_2 - 1$. Furthermore,
    \begin{equation*}
        \card{P_1} = \card{V_1} - 1 = \card{G_1} - 1 = g_1 - 1,
    \end{equation*}
    so we have
    \begin{equation*}
        \card{M'_2} \le m_2 + g_1 - 2.
    \end{equation*}
    Therefore, \Cref{lem:perim-box-comp} implies that
    \begin{equation}
    \label{eq:B2bdry}
        \card{\boundary B_2} = 2a_2 + 2b_2 \le 2m_2 + 2g_1.
    \end{equation}
    Moreover, as $P_1 \subseteq B_2$, we get
    \begin{equation}
    \label{eq:g1a2}
        g_1 \leq a_2.
    \end{equation}

    \item[Step (2.1)]
    If $\card{(\boundary B_2) \setminus C_1} \le 2m - s$, then Breaker claims all edges in $(\boundary B_2) \setminus C_1$.
    Note that $G_1 \subseteq \boundary B_1 \cap (E(B_2) \cup \boundary B_2)$.
    Thus, Breaker surrounds $B_1 \cup B_2$ completely and wins the game.

    \item[Step (2.2)]
    Assume otherwise, and denote
    \begin{equation}
    \label{eq:g2}
        g_2 \defined \card{(\boundary B_2) \setminus C_1} - 2m + s.
    \end{equation}
    First, note that by (\ref{eq:B2bdry}) we get
    \begin{equation}
    \label{eq:g2upr}
        g_2 \le 2g_1 + s + 2(m_2 - m).
    \end{equation}
    Combining the assumption that $g_2 \geq 1$ and (\ref{eq:B2bdry}),
    it follows that
    \begin{equation*}
        2m - s + 1 \le \card{(\boundary B_2) \setminus C_1} \le 2m_2 + 2g_1 - \card{(\boundary B_2) \cap C_1},
    \end{equation*}
    and hence
    \begin{equation}
    \label{eq:B2projB1}
        \card{(\boundary B_2) \cap C_1} \le 2g_1 + s - 1 + 2(m_2 - m).
    \end{equation}

    As $L_1$ and $R_1$ are sets of boundary-edges of $B_1$ on both sides of $G_1$ claimed by Breaker, define $c_1 \defined \min \set{\card{L_1},\card{R_1}}$, and we have
    \begin{align}
        c_1  &= \floor{\tfrac{1}{2} (a_1 - g_1)}
        \geq \tfrac{1}{2}(a_1 - g_1 - 1) \nonumber \\
        &\ge \tfrac{1}{2} \p[\big]{\tfrac{1}{4}\card{\boundary B_1} - \card{\boundary B_1} + 2m-s -1 }
        && \p{\text{By (\ref{eq:a1}) and (\ref{eq:g1})}} \nonumber \\
        &= \tfrac{1}{2} \p[\big]{2m - s - 1 - \tfrac{3}{4}\card{\boundary B_1} } \nonumber \\
        &\ge \tfrac{1}{2} \p[\big]{2m - s - 1 - \tfrac{3}{4}(2m+4)}
        && \p{\text{By (\ref{eq:B1bdryupr}) and $m_1 \le m$}} \nonumber \\
    \label{eq:c1}
        &= \tfrac{1}{4}m - \tfrac{1}{2}s - 2.
    \end{align}

    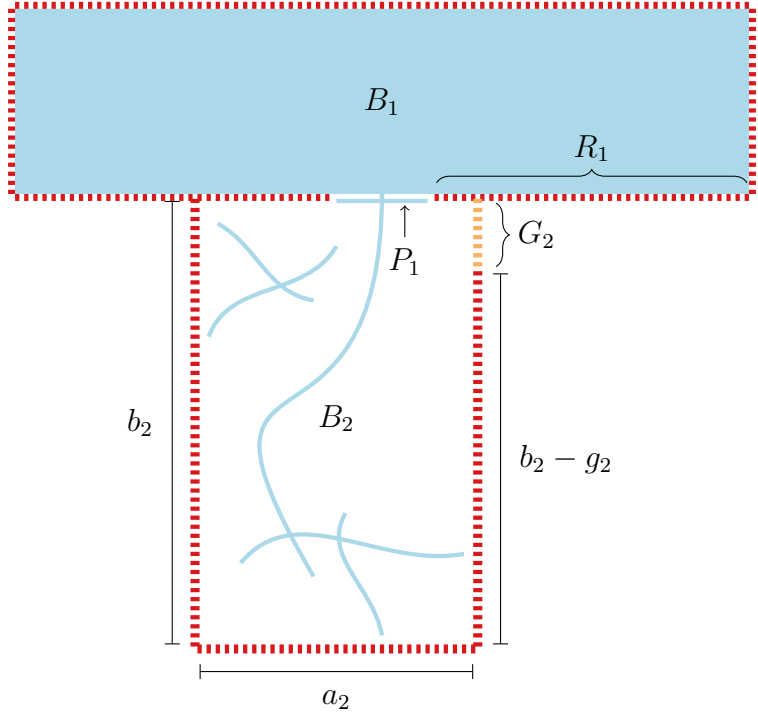
\begin{figure}
    \centering
    \begin{tikzpicture}[scale=0.6]
        \tikzstyle{s-helper}=[lightgray, thin];
        \definecolor{s-b-col}{RGB}{215,25,28};
        \tikzstyle{s-b}=[s-b-col, ultra thick];
        \definecolor{s-o-col}{RGB}{253,174,97};
        \tikzstyle{s-o}=[s-o-col, ultra thick];
        \definecolor{s-m-col}{RGB}{171,217,233};
        \tikzstyle{s-m}=[s-m-col, ultra thick];
        \definecolor{s-g-col}{RGB}{220,220,220};
        \tikzstyle{s-g}=[s-g-col, line width=0.1pt];
        \clip
            (-9,-11.6) rectangle (9, 4.6);
        \foreach \x in {-8,-7.8,...,-1.2}
            \filldraw[xshift=\x cm, s-b]
                (0,0) -- (0,-0.2);
        \foreach \x in {1.2,1.4,...,8}
            \filldraw[xshift=\x cm, s-b]
                (0,0) -- (0,-0.2);
        \foreach \x in {-8,-7.8,...,8}
            \filldraw[xshift=\x cm, yshift=4cm, s-b]
                (0,0) -- (0,0.2);
        \foreach \y in {0,0.2,...,4}
            \filldraw[xshift=-8cm, yshift=\y cm, s-b]
                (0,0) -- (-0.2,0);
        \foreach \y in {0,0.2,...,4}
            \filldraw[xshift=8cm, yshift=\y cm, s-b]
                (0,0) -- (0.2,0);
        \foreach \y in {-0.2,-0.4,...,-10}
            \filldraw[xshift=-4cm, yshift=\y cm, s-b]
                (0,0) -- (-0.2,0);
        \foreach \x in {-4,-3.8,...,2}
            \filldraw[xshift=\x cm, yshift=-10cm, s-b]
                (0,0) -- (0,-0.2);
        \foreach \y in {-1.8,-2.0,...,-10}
            \filldraw[xshift=2cm, yshift=\y cm, s-b]
                (0,0) -- (0.2,0);
        \filldraw[s-m, step=0.2] (-8,0) rectangle (8,4);
        \filldraw[s-m] (-0.5,1.5) rectangle (0.5,2.5)
            node[pos=.5, black] {$B_1$};
        \foreach \y in {-0.2,-0.4,...,-1.6}
            \draw[yshift=\y cm, xshift=2 cm, s-o] (0,0) -- (0.2,0);
        \draw[s-m] (0,0) -- (0,-0.2);
        \draw[s-m] (-1,-0.2) -- (1,-0.2);
        \draw[s-m]
            (0,0) to[out=-90,in=120, looseness=2] (-1.5, -8.5)
            (-3.1,-8.2) to[out=50, in=-170] (1.8,-8)
            (-0.8,-7.1) to[out=-120, in=100] (0
            ,-9.8)
            (-3.6,-0.7) to[out=-30, in=170] (-1.5,-2.4)
            (-3.8,-3.2) to[out=70,in=-120] (-1,-1.2)
        ;
        \draw[draw opacity=0] (-1.5,-4.5) rectangle (-0.5,-5.5)
            node[pos=.5, black] {$B_2$};
        \draw [decorate,decoration={brace,amplitude=5pt}]
          (2.5,-0.25) -- (2.5,-1.65) node [midway,right=4pt] {$G_2$};
        \draw [|-|]
          (-4,-10.6) -- (2,-10.6) node [midway,below=3pt] {$a_2$};
        \draw [|-|]
          (-4.6,-0.2) -- (-4.6,-10) node [midway,left=3pt] {$b_2$};
        \draw [|-|]
          (2.6,-1.8) -- (2.6,-10) node [midway,right=3pt] {$b_2 - g_2$};
        \draw [decorate,decoration={brace,amplitude=5pt}]
          (1.2,0.2) -- (8,0.2) node [midway,above=4pt] {$R_1$};
        \draw [<-]  (0.5,-0.3) -- (0.5,-1) node[below] {$P_1$};
    \end{tikzpicture}
    \caption{End of Round 2, where Maker is in light blue and Breaker in dark red. In orange, the set $G_2$ of $g_2$ consecutive unclaimed edges, forms the gate for the second round.}
    \label{fig:round2}
    \end{figure}

    Let $x_0$ be minimal such that
    \begin{equation*}
        \abs{x_0} \geq \max \set*{ \abs{x} \st (x,y) \in V \p*{ \ZZ^2[\boundary B_1]} }.
    \end{equation*}
    Assume that Maker claimed any edge which is either to the right or to the left of $B_1$, that is, an edge which has a vertex $v = (x_0,y_0)$.
    In particular, as Maker plays box-connected, we get that $B_2 \setminus C_1$ contains a horizontal path of length at least $c_1 + g_1$, and thus $(\boundary B_2) \cap C_1$ contains such a path as well.
    Hence, we get that
    \begin{align*}
        \card{(\boundary B_2) \cap C_1} &\ge c_1 + g_1 \\
        &\ge \tfrac{1}{4}m - \tfrac{1}{2}s - 2 + g_1
        && \p{\text{By (\ref{eq:c1})}} \\
        &\ge 2s + 4 + g_1
        && \p[\big]{\text{As $s  \leq \tfrac{m - 22}{14}$ and $m \geq 29$}}\\
        &\geq 2s + 4 - 2(m - m_1) + 2(m_2 - m) + g_1
        && \p{\text{By (\ref{eq:misum})}} \\
        &\ge 2g_1 + s + 2(m_2 - m),
        && \p{\text{By (\ref{eq:g1})}}
    \end{align*}
    contradicting (\ref{eq:B2projB1}).
    Hence, Maker did not claim any such edge.

    It follows that $B_2$ is completely contained in the infinite vertical stripe defined by the right and left sides of $B_1$.
    We remark that requiring $m \ge 29$ is necessary for this very step.
    More formally, we have
    \begin{equation}
    \label{eq:a2}
        a_2 = \card{(\boundary B_2) \cap C_1}.
    \end{equation}
    Furthermore, note that we must have $B_1 \cap B_2 = \emptyset$, which implies $G_1 \subseteq \boundary B_2$.
    In particular, by (\ref{eq:B2projB1}), we get
    \begin{equation}
    \label{eq:a2upr}
         a_2 \le 2g_1 + s - 1 + 2(m_2 - m),
    \end{equation}
    and by (\ref{eq:g2}), we get that
    \begin{equation}
    \label{eq:g2precise}
        g_2 = \card{\boundary B_2} - a_2 - 2m + s.
    \end{equation}
    Recall that $\card{R_1 \cap (\boundary B_2)} \le \card{L_1 \cap (\boundary B_2)}$, so in total we get
    \begin{equation}
    \label{eq:a1-a2}
        \card{R_1 \setminus (\boundary B_2)} \ge \tfrac{1}{2} (a_1 - a_2).
    \end{equation}

    Let $G_2$ be a set of $g_2$ consecutive edges starting at an edge $e\in (\boundary B_2) \setminus C_1$ of minimal distance to $G_1$.
    Note that $e$ is in fact the top-right horizontal edge in $\boundary B_2$, and thus $G_2$ the set of $g_2$ most top boundary-edges on the right side of $B_2$.
    Following \Cref{str:B-box-lmtd}, Breaker claims all $2m - s$ edges of $(\boundary B_2) \setminus C_1$, leaving the edges of $G_2$ unclaimed as in \Cref{fig:round2}, which we refer as the gate for the second round.
\end{description}

\subsection*{Third Round}

We now show that there exists a set of at most $2m-s$ unclaimed edges such that by claiming them, Breaker wins the game in this round.
We do this by a bit more careful geometric analysis, and by combining together all the information described above in the previous two rounds of the game.

We start by giving some notations analogously to those in the first two rounds in \Cref{str:B-box-lmtd}.
Let $M_3$ be the set of edges that Maker claimed on her third turn.
So we have $m_3 \defined \card{M_3}$.
Let $C_2 \defined C_1 \cup E(B_2) \cup \boundary B_2$.
Again, we sometimes regard the edges in $B_2 \cup G_2$ as being played by Maker, and thus regard $C_2$ as the set of already claimed edges.
Denote by $D$ the set of boundary-edges adjacent to the right side of $B_2$.
So we have $\card{D} = b_2$.
Denote by $A$ the box for which $D$ is the set of boundary-edges of the left side of it, and $R_1 \setminus (\boundary B_2)$ is the set of boundary-edges of the top side of it (see \Cref{fig:round3}).
We consider three cases.

\begin{description}[leftmargin=0pt]
    \item[Case 1]
    $M_3 \cap G_2 = \emptyset$.

    Similarly to the second round, by (\ref{eq:g2upr}) and (\ref{eq:g1uprs}), and the fact that $m_1 + m_2 \le 2m$, we have $\card{G_2} = g_2 \le 3s + 8 \le 2m - s$.
    Thus in this case we have $\card{M_3 \cap G_2} \le 2m-s$, and by claiming all edges in the gate $G_2$, Breaker surrounds $B_1 \cup B_2$ completely and wins the game.

    \item[Case 2]
    $M_3 \cap G_2 \neq \emptyset$ and $(M_3 \setminus C_2) \cap (\boundary A) = \emptyset$.

    In this case, by claiming all unclaimed edges in $\boundary A$, Breaker surrounds Maker's graph completely and wins the game.
    We show that he can indeed do so.

    Note that there are $\card{\boundary A} = 2\card{D} + 2\card{R_1\setminus (\boundary B_2)}$ edges in the boundary of $A$, from which only $\card{D} + \card{R_1\setminus (\boundary B_2)}$ are unclaimed.
    We get that,
    \begin{align}
        \card{(\boundary A) \setminus C_2} &\le \card{D} + \card{R_1 \setminus (\boundary B_2)} \nonumber\\
        &\le \card{D} + \tfrac{1}{2}m_1
        && \p{\text{As $\card{R_1} \le \tfrac{1}{2}m_1$}} \nonumber\\
        &\le m_2 + \tfrac{1}{2}m_1
        && \p{\text{As $\card{D} = b_2 \leq m_2$}} \nonumber\\
        &\le 2m - \tfrac{1}{2}m_1
        && \p{\text{As $m_1 + m_2 \le 2m$}} \nonumber\\
    \label{eq:case2}
        &\le 2m - s,
        && \p{\text{As $m_1 \geq 2s$ by (\ref{eq:m1lwr})}}
    \end{align}
    and therefore, Breaker wins the game.

    \begin{figure}
    \centering
    \begin{tikzpicture}[scale=0.6]
        \tikzstyle{s-helper}=[lightgray, thin];
        \definecolor{s-b-col}{RGB}{215,25,28};
        \tikzstyle{s-b}=[s-b-col, ultra thick];
        \definecolor{s-o-col}{RGB}{253,174,97};
        \tikzstyle{s-o}=[s-o-col, ultra thick];
        \definecolor{s-m-col}{RGB}{171,217,233};
        \tikzstyle{s-m}=[s-m-col, ultra thick];
        \definecolor{s-g-col}{RGB}{220,220,220};
        \tikzstyle{s-g}=[s-g-col, line width=0.1pt];
        \clip
            (-9,-12.2) rectangle (10, 5.6);
        \foreach \x in {-8,-7.8,...,-1.2}
            \filldraw[xshift=\x cm, s-b]
                (0,0) -- (0,-0.2);
        \foreach \x in {1.2,1.4,...,8}
            \filldraw[xshift=\x cm, s-b]
                (0,0) -- (0,-0.2);
        \foreach \x in {-8,-7.8,...,8}
            \filldraw[xshift=\x cm, yshift=4cm, s-b]
                (0,0) -- (0,0.2);
        \foreach \y in {0,0.2,...,4}
            \filldraw[xshift=-8cm, yshift=\y cm, s-b]
                (0,0) -- (-0.2,0);
        \foreach \y in {0,0.2,...,4}
            \filldraw[xshift=8cm, yshift=\y cm, s-b]
                (0,0) -- (0.2,0);
        \foreach \y in {-0.2,-0.4,...,-10}
            \filldraw[xshift=-4cm, yshift=\y cm, s-b]
                (0,0) -- (-0.2,0);
        \foreach \x in {-4,-3.8,...,2}
            \filldraw[xshift=\x cm, yshift=-10cm, s-b]
                (0,0) -- (0,-0.2);
        \foreach \y in {-1.8,-2.0,...,-10}
            \filldraw[xshift=2cm, yshift=\y cm, s-b]
                (0,0) -- (0.2,0);
        \foreach \y in {-0.2,-0.4,...,-9.8}
            \filldraw[xshift=7cm, yshift=\y cm, s-b]
                (0,0) -- (0.2,0);
        \foreach \x in {3.6,3.8,4.0,4.2,4.4,4.6,4.8,5.0}
            \filldraw[xshift=\x cm, yshift=-10cm, s-b]
                (0,0) -- (0,-0.2);
        \foreach \x in {-5.4,-5.2,...,3.4}
            \filldraw[xshift=\x cm, yshift=-11.2cm, s-b]
                (0,0) -- (0,-0.2);
        \foreach \y in {-9.0,-9.2,...,-11.2}
            \filldraw[xshift=-5.6cm, yshift=\y cm, s-b]
                (0,0) -- (0.2,0);
        \foreach \x in {-5.4,-5.2,...,-4.2}
            \filldraw[xshift=\x cm, yshift=-8.8cm, s-b]
                (0,0) -- (0,-0.2);
        \foreach \y in {-10.2,-10.4,...,-11.2}
            \filldraw[xshift=3.4cm, yshift=\y cm, s-b]
                (0,0) -- (0.2,0);
        \foreach \y in {-10.2,-10.4,...,-11.8}
            \filldraw[xshift=5.0cm, yshift=\y cm, s-b]
                (0,0) -- (0.2,0);
        \foreach \x in {5.2,5.4,...,7.6}
            \filldraw[xshift=\x cm, yshift=-12.0cm, s-b]
                (0,0) -- (0,0.2);
        \foreach \y in {-10,-10.2,...,-11.8}
            \filldraw[xshift=7.6cm, yshift=\y cm, s-b]
                (0,0) -- (0.2,0);
        \foreach \x in {7.2,7.4,7.6}
            \filldraw[xshift=\x cm, yshift=-10.0cm, s-b]
                (0,0) -- (0,0.2);
        \filldraw[s-m-col] (-8,0) rectangle (8,4);
        \filldraw[s-m] (-0.5,1.5) rectangle (0.5,2.5)
            node[pos=.5, black] {$B_1$};
        \filldraw[s-m-col, step=0.2] (-4,-0.2) rectangle (2,-10);
        \filldraw[s-m] (-1.5,-4.5) rectangle (-0.5,-5.5)
            node[pos=.5, black] {$B_2$};
        \draw[draw opacity=0] (4.1,-4.5) rectangle (5.1,-5.5)
            node[pos=.5, black] {$A'$};
        \draw[draw opacity=0] (-5.5,-10.3) rectangle (-4.5,-11.3)
            node[pos=.5, black] {$S_1$};
        \draw[draw opacity=0] (6.5,-10.3) rectangle (7.5,-11.3)
            node[pos=.5, black] {$S_2$};
        \draw[s-m] (0,0) -- (0,-0.2);
        \draw[s-m] (2,-1) -- (2.2,-1);
        \draw[s-m] (2.2,-0.2) -- (2.2,-1.6);
        \draw[s-m]
            (2.2,-1) to[out=0,in=90]
            (3.2,-10) -- (3.2,-10.2)
            to[out=-100, in=0] (-1,-10.7)
            to[out=180, in=-110] (-5.2,-9.2)
            (5.3,-5.8) to[out=100, in=-80] (6.8,-3.7)
            (5.4,-3.7) to[out=-100, in=80] (6.3,-5.5)
            (3.0,-7.8) to[out=70, in=-110] (6.2,-8.2)
            (6,-7.6) to[out=-150, in=90] (5.8,-10) --
            (5.8,-10.2) to[out=-90, in=170] (7.5,-11.7)
            (5.3,-11.7) to[out=0,in=-140] (6.0,-11.3)
        ;
        \draw [decorate,decoration={brace,amplitude=5pt, mirror}]
          (1.7,-0.25) -- (1.7,-9.95) node [midway,left=4pt] {$D$};
        \draw [|-|]
          (8.6,-0.2) -- (8.6,-10) node [midway,right=3pt] {$b_3$};
        \draw [|-|]
          (2.25,-0.5) -- (6.95,-0.5) node [midway,below=3pt] {$a_3$};
        \draw [decorate,decoration={brace,amplitude=5pt}]
          (2.4,0.2) -- (7.95,0.2) node [midway,above=4pt] {$R_1 \setminus (\partial B_2)$};
        \draw [|-|] (-8,4.6) -- (8,4.6) node [midway,above=3pt] {$a_1$};
        \draw [|-|]
          (-3.95,0.3) -- (1.95,0.3) node [midway,above=3pt] {$a_2$};
        \draw [densely dashed] (2.2,-10) -- (8,-10) -- (8,-0.2);
        \draw [<-]  (2.4,-1.4) -- (2.9,-1.9) node[right=5pt,below=0pt] {$P_2$};
    \end{tikzpicture}
    \caption{End of Round 3, where Maker is in light blue and Breaker in dark red. The region bounded by the dashed lines is the box $A$.}
    \label{fig:round3}
    \end{figure}
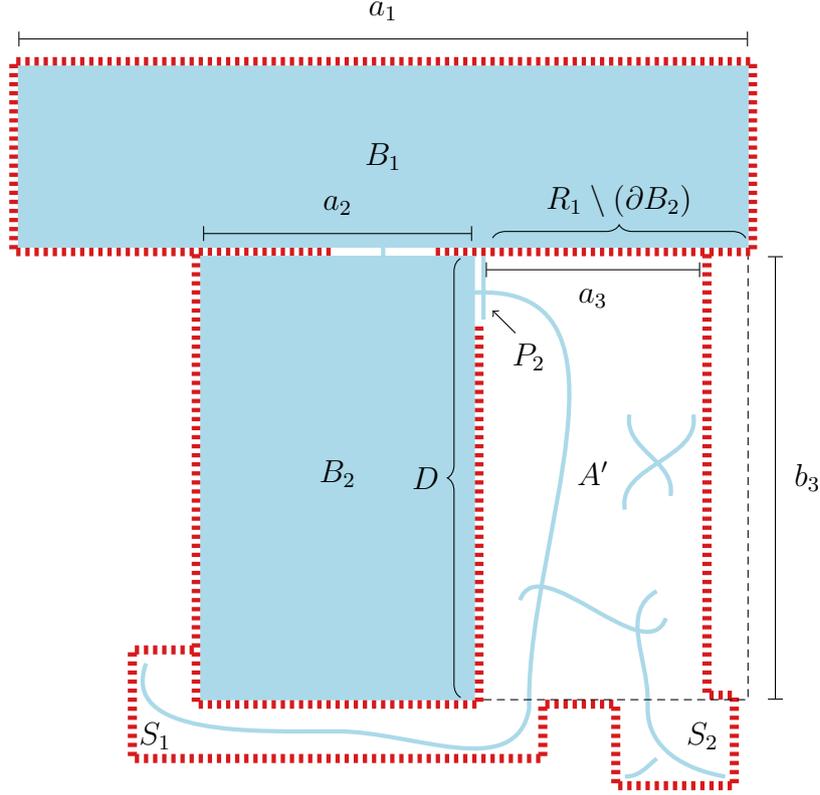

    \item[Case 3]
    $M_3 \cap G_2 \neq \emptyset$ and $(M_3 \setminus C_2) \cap (\boundary A) \neq \emptyset$.

    For this case we need some further notation.
    Similarly to the second round, let $V_2$ be the set of vertices in $\ZZ^2 \setminus B_2$ which are contained in edges of $G_2$.
    Let further $P_2 \defined E \p*{\ZZ^2[V_2]}$ be the set of edges in the path induced by the vertices of $V_2$.
    Note that we have $P_2 \subseteq A$.
    Let $M'_3 \defined (M_3 \cup P_2)\setminus C_2$.
    Similarly to the second round, we have $\card{P_2} = g_2 - 1$, and since $M_3 \cap G_2 \neq \emptyset$, we also have $\card{M_3 \setminus C_2} \le m_3 - 1$.
    In total we get
    \begin{equation}
    \label{eq:M3}
        \card{M'_3} \le m_3 + g_2 - 2.
    \end{equation}

    Let $M_3^A \defined M'_3 \cap E(A)$, and let $A' \defined \bb(M_3^A)$ be its bounding-box (see \Cref{fig:round3}).
    By box-connectivity, and since $(M_3 \setminus C_2) \cap (\boundary A) \neq \emptyset$, we get that the box $A'$ shares at least one full side with the box $A$.
    In \Cref{fig:round3}, they share the left side, for instance.

    If $(\boundary A') \cap M'_3 = \emptyset$, then Breaker wins simply by claiming all the edges in $\boundary A' \setminus C_2$.
    Indeed, as $A' \subseteq A$ and (\ref{eq:case2}), we have
    \begin{align*}
        \card{(\boundary A') \setminus C_2} \le 2m - s.
    \end{align*}

    Hence we may assume that $(\boundary A') \cap M'_3 \neq \emptyset$.
    Let $S_1,\dotsc,S_t$ be the box-components of $M'_3 \setminus M_3^A$ such that intersect the boundary of $A'$.
    That is, for $i \in [t]$ we have $S_i \cap (\boundary A') \neq \emptyset$.
    Further, denote $Q_i \defined \bb(S_i)$.

    We now show that Breaker can claim all unclaimed edges that surround
    \begin{equation*}
        A' \cup Q_1 \cup \dotsb \cup Q_t,
    \end{equation*}
    which, in particular, implies that he wins the game.
    More precisely, we show that there at most $2m-s$ such edges, that is,
    \begin{equation}
    \label{eq:BoxesBdry}
        \card[\big]{\boundary \p[\big]{A' \cup Q_1 \cup \dotsb \cup Q_t} \setminus C_2} \le 2m-s.
    \end{equation}

    Let us emphasise that in the equation above we mean the edge-boundary of a union of boxes, rather than the edge-boundary of their joint bounding box, as in previous rounds.

    To prove (\ref{eq:BoxesBdry}), first note that for any distinct $i,j \in [t]$, we have $Q_i \cap Q_j = \emptyset$ by box-connectivity.
    Therefore,
    \begin{align}
        \card[\big]{\boundary \p[\big]{A' \cup Q_1 \cup \dotsb \cup Q_t} \setminus C_2} &\le \card[\big]{(\boundary A')\setminus C_2} + \sum_{i=1}^t \card[\big]{\boundary (A' \cup Q_i) \setminus \boundary A'} \nonumber \\
    \label{eq:SnakesBdry}
        &\le \card[\big]{(\boundary A')\setminus C_2} + \sum_{i=1}^t \p[\big]{\card{\boundary (A' \cup Q_i)} - \card{\boundary A'}}.
    \end{align}
    Now we bound each of these two terms separately.

    We start with bounding the sum in (\ref{eq:SnakesBdry}).
    Similarly to the argument in the proof of \Cref{lem:perim-box-comp}, for each $i\in [t]$ we have
    \begin{equation*}
        \card{\boundary(A' \cup Q_i)} \le \card{\boundary A'} + \card{\boundary Q_i} - 4.
    \end{equation*}
    Recall that by \Cref{lem:perim-box-comp}, we have $\card{\boundary Q_i} \le 2e(S_i) + 4$ for each $i\in [t]$, so in total we get
    \begin{equation}
    \label{eq:SnakeSum}
        \sum_{i=1}^t \p[\big]{\card{\boundary (A' \cup Q_i)} - \card{\boundary A'}} \le 2\sum_{i=1}^t e(S_i).
    \end{equation}

    As for the first term in (\ref{eq:SnakesBdry}), let $a_3$ and $b_3$ be the numbers of vertices in the top and bottom sides and in the left and right sides of $A'$, respectively.
    Recall that $A'$ shares at least one side with $A$, so in particular we have either $a_3 = \card{R_1 \setminus (\boundary B_2)}$ or $b_3 = b_2$.
    It follows that there are at least $\min \set{ \card{R_1 \setminus (\boundary B_2)}, b_2 - g_2 }$ edges in $\boundary A'$ which are already claimed by Breaker.
    Moreover, we have $G_2 \subseteq \boundary A'$, so we get a reduction of at least $g_2$ more edges from the amount that Breaker has to claim in $\boundary A'$.
    In total, we get
    \begin{equation*}
        \card[\big]{(\boundary A')\cap C_2} \ge g_2 + \min \set[\big]{ \card{R_1 \setminus (\boundary B_2 )} ,\, b_2 - g_2}.
    \end{equation*}

    Denote $m^A_3 \defined \card{M^A_3}$ and recall that $A' = \bb(M^A_3)$.
    By \Cref{lem:perim-box-comp} and by the above, we get that
    \begin{align*}
        \card[\big]{(\boundary A' )\setminus C_2} &= \card[\big]{\boundary A'} - \card[\big]{(\boundary A')\cap C_2} \\
        &\le 2 m_3^A + 4 - g_2 - \min \set[\big]{ \card{R_1 \setminus (\boundary B_2 )} ,\, b_2 - g_2}.
    \end{align*}
    To finish the proof, we need the following technical claim, which we prove later.

    \begin{claim}
    \label{claim:min}
        We have
        \begin{equation}
        \label{eq:round3min}
            \min \set[\big]{ \card{R_1 \setminus (\boundary B_2 )} ,\, b_2 - g_2} \ge g_2 + s + 2(m_3 - m).
        \end{equation}
    \end{claim}

    Assume for now that \Cref{claim:min} holds.
    We get that
    \begin{equation}
    \label{eq:A'Bdry}
        \card[\big]{(\boundary A') \setminus C_2} \le 2(m + m^A_3 - m_3) - 2g_2 - s + 4.
    \end{equation}
    Furthermore, by (\ref{eq:M3}) we have
    \begin{align*}
        m_3 + g_2 - 2 \ge \card{M'_3} \ge m^A_3 + \sum_{i=1}^t e(S_i),
    \end{align*}
    and in particular
    \begin{align*}
        m^A_3 - m_3 \le g_2 - 2 - \sum_{i=1}^t e(S_i).
    \end{align*}
    So by (\ref{eq:A'Bdry}), we get
    \begin{align*}
        \card[\big]{(\boundary A') \setminus C_2} \le 2 \p[\Big]{m + g_2 - 2 - \sum_{i=1}^t e(S_i)} - 2g_2 - s + 4 = 2m - s - 2\sum_{i=1}^t e(S_i).
    \end{align*}
    Finally, by (\ref{eq:SnakeSum}), (\ref{eq:SnakesBdry}), and by the above, we conclude
    \begin{align*}
        \card[\Big]{\boundary \p[\big]{A' \cup Q_1 \cup \dotsb \cup Q_t} \setminus C_2} \leq 2m - s,
    \end{align*}
    proving (\ref{eq:BoxesBdry}), as required.

    Hence, it is only left to prove \Cref{claim:min}.

    \begin{proof}[Proof of \Cref{claim:min}]
        We start with the second term.
        Observe that
        \begin{align*}
            g_2 &= a_2 + 2b_2 - 2m + s
            && \p{\text{By (\ref{eq:g2precise}) and (\ref{eq:B2bdry})}}\\
            &\le b_2 + g_1 + s - (2m - m_2).
            && \p{\text{Again by (\ref{eq:B2bdry})}}
        \end{align*}
        Therefore, we have
        \begin{align}
            b_2 - g_2 &\ge 2m - m_2 - s - g_1 \nonumber\\
            &\ge 2m - m_2 - 2s - 4 + 2(m - m_1)
            && \p{\text{By (\ref{eq:g1uprs})}}\nonumber\\
        \label{eq:min2large}
            &\ge m - 2s - 4,
        \end{align}
        where we used the fact that $m_1 \leq m$ and $m_1 + m_2 \leq 2m$.

        On the other hand, we have
        \begin{align*}
            g_2 + s + 2(m_3 - m) &\le 2g_1 + 2s + 2(m_2 - m) + 2(m_3 - m)
            && \p{\text{By (\ref{eq:g2upr})}} \\
            &\le 4s + 8 + 4m_1 + 2(m_2 + m_3) - 8m
            && \p{\text{By (\ref{eq:g1uprs})}} \\
            &\le 4s + 8
            && \p{\text{By (\ref{eq:misum})}} \\
            &\le m - 2s - 4.
            && \p[\big]{\text{As $s \le \tfrac{m-22}{14}$}}.
        \end{align*}
        Combining this with (\ref{eq:min2large}), we get
        \begin{equation*}
            b_2 - g_2 \ge g_2 + s + 2(m_3 - m),
        \end{equation*}
        proving the claim for the second term in~(\ref{eq:round3min}).

        As for the first term, we start by noticing that
        \begin{align}
            \tfrac{1}{2}a_1 + \tfrac{1}{2}a_2 - 2g_1 &\ge \tfrac{1}{2} (a_1 - 3g_1)
            && \p{\text{By (\ref{eq:g1a2})}} \nonumber\\
            &\ge \tfrac{1}{2} \p[\Big]{\tfrac{1}{4}\card{\boundary B_1} - 3\p[\big]{\card{\boundary B_1} - 2m + s } }
            && \p{\text{By (\ref{eq:a1}) and (\ref{eq:g1})}} \nonumber \\
            &= \tfrac{1}{2}\p[\big]{6m - 3s - \tfrac{11}{4}\card{\boundary B_1} } \nonumber \\
        \label{eq:a1a2g1}
            &\ge \tfrac{1}{2}\p[\big]{6m - 3s - \tfrac{11}{4}(2m_1+4)}.
            && \p{\text{By (\ref{eq:B1bdryupr})}}
        \end{align}
        In addition, using (\ref{eq:g2precise}) and (\ref{eq:B2bdry}), we can also write
        \begin{equation}
        \label{eq:g2bnd}
            g_2 \le 2g_1 + s - a_2 - 2(m - m_2).
        \end{equation}
        Hence we get
        \begin{align*}
            \card{R_1 \setminus (\boundary B_2 )} - g_2&\ge  \tfrac{1}{2} (a_1 - a_2) - g_2
            && \p{\text{By (\ref{eq:a1-a2})}}\\
            &\ge \tfrac{1}{2}a_1 + \tfrac{1}{2}a_2 - 2g_1 - s + 2(m - m_2)
            && \p{\text{By (\ref{eq:g2bnd})}}\\
            &\ge \tfrac{1}{2}\p[\big]{6m - 3s - \tfrac{11}{4}(2m_1+4)} - s + 2(m - m_2)
            && \p{\text{By (\ref{eq:a1a2g1})}}\\
            &= 7m - 2(m_1 + m_2 + m_3) \\
            &\quad- \tfrac{3}{4}m_1 - \tfrac{5}{2}s - \tfrac{11}{2} + 2(m_3 - m) \\
            &\ge m - \tfrac{3}{4}m_1 - \tfrac{5}{2}s - \tfrac{11}{2} + 2(m_3 - m)
            && \p{\text{By (\ref{eq:misum})}} \\
            &\ge \tfrac{1}{4}m - \tfrac{5}{2}s - \tfrac{11}{2} + 2(m_3 - m)
            && \p{\text{As $m_1 \leq m$}} \\
            &\ge s + 2(m_3 - m)
            && \p[\big]{\text{As $s \leq \tfrac{m-22}{14}$}},
        \end{align*}
        finishing the proof of the claim.
    \end{proof}
\end{description}
This completes the proof.
\end{proof}

Note that a more careful analysis, in similar spirit to the one above, might produce a somewhat smaller ratio than $2-\frac{1}{14}+o(1)$ in \Cref{thm:main1}.
However, as our main aim was to break the perimetric barrier, we did not attempt to optimise it in the benefit of clarity.


\section{Fast win of Breaker in the boosted \texorpdfstring{$(m,2m)$}{(m,2m)}-game}
\label{sec:fast-boost}

In this section we prove \Cref{thm:boost}.
Note that, as the game is bias monotone in $b$, it is enough to prove \Cref{thm:boost} for $b = 2m$.

As discussed in \Cref{sec:intro}, the proof contains two important ideas.
Firstly, assuming that Maker plays connected significantly simplifies the analysis of the game.
Hence we consider a slight variation of the game for which the following hold.
\begin{enumerate}
    \item If Breaker wins this variant within the first $k$ rounds using a strategy satisfying certain further condition, then he also wins the $(m,b)$ percolation game within the first $k$ rounds.
    \item Maker must keep the graph spanned by the edges she claimed connected.
\end{enumerate}

Note that there is a certain price we have to pay for this in the form of restrictions on Breaker's strategy; and in allowing Maker, instead of claiming precisely $m$ edges in each round, to claim sometimes a bit more and sometimes a bit less.

The second important idea is to define a strategy of Breaker in such a way that he forces Maker to create a component of a `bad' shape.
More precisely, as we can see by \Cref{lem:perimetric}, if we give Maker $km + c$ edges to build a connected component and Breaker $2km$ edges to place in the boundary of the said component, Breaker can ensure to claim `almost' all edges in the boundary.
Hence, if he can force Maker to create a component that is sufficiently far from equality in \Cref{lem:perimetric}, Breaker can in fact claim all the edges in its boundary.
In the rest of the section, we expand these ideas into a formal proof.

We start by defining the variation of the game that we mentioned before.

\begin{definition}[$c$-boosted $(m,b)$ Maker-Breaker limited percolation game on $\ZZ^2$]
\label{def:bstd-lmtd-game}
    Two players, Maker and Breaker alternate claiming yet unclaimed edges of a board $\ZZ^2$, starting in round $1$ with Maker going first.
    \begin{itemize}
        \item In round $i$, Maker chooses a non-negative integer $m_i$ such that
        \begin{equation*}
            \sum_{j=1}^i m_j \leq im + c,
        \end{equation*}
        and then claims $m_i$ unclaimed edges from $E(\ZZ^2)$.
        Moreover, Maker must play in a way that in the end of each of her turns, her edges must be in the component of $v_0$.
        \item In each round, Breaker claims at most $b$ unclaimed edges.
        \item Breaker wins if the connected component of $v_0$ consisting only of Maker's edges and unclaimed edges becomes finite. If Maker can ensure that this never happens, then she wins.
    \end{itemize}
\end{definition}

The following proposition is a key result of the same spirit as \Cref{prop:boxlmtd-unlmted}, relating these two games.

\begin{proposition}\label{prop:lmtd-unlmtd}
Let $m, b \ge 1$ and $c \ge 0$ be integers.
Assume that Breaker can ensure his win in the $c$-boosted $(m,b)$ limited percolation game on $\ZZ^2$ within the first $k$ rounds by claiming only edges from the boundary of the graph spanned by Maker's edges.
Then he can also ensure his win in the $c$-boosted $(m,b)$ percolation game on $\ZZ^2$ within the first $k$ rounds.
\end{proposition}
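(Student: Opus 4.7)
The plan mirrors the proof of \Cref{prop:boxlmtd-unlmted}. I will establish the contrapositive: if Maker has a strategy $\mu$ that survives the first $k$ rounds of the (unlimited) $c$-boosted $(m,b)$ percolation game on $\ZZ^2$, then she also has a strategy $\mu'$ that survives the first $k$ rounds of the limited variant from \Cref{def:bstd-lmtd-game}, provided that Breaker is restricted to claiming only edges in the boundary of the graph spanned by Maker's edges. The proposition then follows.

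The strategy $\mu'$ is built by parallel simulation. Maker runs an unlimited game in her head, playing $\mu$ there. In each round, after $\mu$ claims its $m$ (or $m+c$, in round $1$) edges, let $G_\mu$ denote the simulated graph of Maker's claimed edges and let $C_\mu$ be the connected component of $v_0$ in $G_\mu$. In the real limited game, Maker claims exactly those edges of $C_\mu$ that she has not yet claimed there. Edges of $G_\mu \setminus C_\mu$ are held back as \emph{imaginary}; they will be claimed in a later round once $\mu$ attaches them to the component of $v_0$ in the simulation. After her move, Breaker responds in the limited game with his boundary-restricted strategy; Maker copies his choice back into the simulated game as Breaker's move there, keeping the two simulations synchronized.

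Three items need verification. The connectivity requirement of \Cref{def:bstd-lmtd-game} holds because, by construction, the set of edges Maker has really claimed coincides with $C_\mu$ at the end of every turn, which is a connected subgraph containing $v_0$. The budget $\sum_{j \le i} m_j \le im + c$ holds because Maker's real cumulative count is bounded by the number of distinct edges $\mu$ has claimed in the simulation, which equals $im + c$. Finally, Breaker's moves can be faithfully transplanted into the simulated game: any edge $e \in \boundary C_\mu$ shares a vertex with $V(C_\mu)$ but is not in $E(C_\mu)$, so if $e$ were in $G_\mu$ then the component of $v_0$ would extend along $e$, contradicting $e \notin E(C_\mu)$. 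Hence $e \notin G_\mu$, and in particular Breaker never tries to claim an imaginary edge or one already owned by $\mu$ in the simulation.

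Letting $B$ denote the set of Breaker's claimed edges so far, after any round the union of Maker's claimed edges with the unclaimed edges equals $E(\ZZ^2) \setminus B$ in \emph{both} games, so the connected component of $v_0$ in this union has the same vertex set in both. Since $\mu$ keeps this component infinite throughout the first $k$ rounds, so does $\mu'$. The main subtlety is the consistency check for the parallel simulation, which rests on the structural fact that any edge of $G_\mu$ incident to $V(C_\mu)$ must itself lie in $C_\mu$ — a direct consequence of the definition of a connected component.
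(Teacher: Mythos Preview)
Your proof is correct and follows essentially the same approach as the paper. The paper does not spell out a proof of this proposition, stating only that it is similar to the proof of \Cref{prop:boxlmtd-unlmted}; your argument is precisely that adaptation, carried out with the appropriate replacement of ``box-component of $v_0$'' by ``connected component of $v_0$'' and with the key observation that Breaker's boundary restriction prevents him from ever touching an imaginary edge.
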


The proof of \Cref{prop:lmtd-unlmtd} is similar to the proof of \Cref{prop:boxlmtd-unlmted} except for some minor details, so we do not include it.

Throughout the rest of the section, we consider only the $c$-boosted limited percolation game on $\ZZ^2$, and prove the following.

\begin{proposition}
\label{prop:lmtdBwin}
Let $m \ge 1$ and $c \geq 0$ be integers.
Then Breaker can guarantee to win the $c$-boosted $(m,2m)$ limited percolation game on $\ZZ^2$ within the first $(2c+4)(2c+5)\p[big]{\ceil{ \frac{2c+2}{m} } + 1}$ rounds of the game.
\end{proposition}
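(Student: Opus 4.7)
By \Cref{prop:lmtd-unlmtd} it suffices to exhibit a Breaker strategy for the $c$-boosted $(m,2m)$ \emph{limited} percolation game on $\ZZ^2$ that plays only on the edge boundary of Maker's connected component $C_k$, and to bound the number of rounds by $T=(2c+4)(2c+5)(\lceil(2c+2)/m\rceil+1)$. By \Cref{lem:perimetric}, $|\partial C_k|\le 2e(C_k)+4\le 2(km+c)+4$. Since $V(C_k)$ grows monotonically with $k$, any boundary edge Breaker has claimed remains in $\partial C_j$ for all $j\ge k$; so if Breaker plays $2m$ boundary edges every round, the unclaimed portion of $\partial C_k$ has size at most $2c+4$ after round $k$, and Breaker wins as soon as this count plus the boundary created by Maker's next move is at most $2m$.

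The analysis is driven by the \emph{slack} $E_k\defined|\partial C_k|-2e(C_k)\le 4$, which a short case analysis of Maker's moves shows to be monotone non-increasing: when Maker extends to a new vertex with $j\ge 1$ neighbours in $V(C_k)$, $E$ changes by $1-j$, and when she plays an edge between two existing vertices, $E$ drops by $3$. Thus $E_k$ is preserved exactly on \emph{leaf} moves and strictly decreases on every other move. A direct computation shows that Breaker can close out the boundary on his next turn as soon as $E_k\le -2c$, which requires Maker to have suffered a cumulative drop of at least $2c+4$ in $E_k$ relative to the starting value of $4$.

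Breaker forces this drop by \emph{geometric confinement}: he plays on $\partial\bb(C_k)$, a subset of $\partial C_k$ by \Cref{lem:perim-bb}, iteratively pinning down the bounding box over phases of length $\lceil(2c+2)/m\rceil+1$ rounds. Each phase either traps another unit of Maker's bounding box or costs her a strict decrease in $E_k$, and after at most $(2c+4)(2c+5)$ phases Maker's component is confined to a region of dimensions $\le(2c+4)\times(2c+5)$. Inside this finite region she has exhausted her leaf options, every further move decreases $E_k$, and within the remaining rounds of the budget $T$ the slack drops to $\le -2c$, at which point Breaker claims all remaining boundary edges and wins.

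The main obstacle is making the confinement argument precise and quantitative: each phase must guarantee measurable progress despite Maker being free to redirect her growth, and the $c$-boost enlarges the round-$1$ bounding box to have perimeter up to $2m+2c+4$, which must be absorbed into the phase accounting. The bookkeeping is geometric in nature, combining $E_k$ as a potential function with careful tracking of which boundary edges Breaker claims along each of the four sides of $\bb(C_k)$, in the same spirit as the box-gate strategy of \Cref{sec:ratio} but tuned to quantify rounds rather than break the perimetric ratio.
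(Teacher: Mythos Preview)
Your plan contains two genuine errors that prevent it from going through.

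First, you claim that $\partial\bb(C_k)\subseteq\partial C_k$ and cite \Cref{lem:perim-bb}. That lemma gives only the cardinality inequality $|\partial\bb(D)|\le|\partial D|$; it does \emph{not} assert containment, and containment is false in general. If $C_k$ is, say, an L-shape, then the corner of $\bb(C_k)$ opposite the bend is not a vertex of $C_k$, and the two edges of $\partial\bb(C_k)$ incident to it lie outside $\partial C_k$. Since you explicitly need Breaker to play only on $\partial C_k$ (in order to invoke \Cref{prop:lmtd-unlmtd}), a strategy that plays on $\partial\bb(C_k)$ is not admissible.

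Second, the confinement claim is inconsistent with edge counts. By \Cref{lem:sum-mi}, if Breaker has not won after $k$ rounds then $e(C_k)\ge km-1$. With $T=(2c+4)(2c+5)(\lceil(2c+2)/m\rceil+1)$, Maker's component has at least $Tm-1$ edges near the end of the game, which for every $m\ge1$ far exceeds the number of edges that fit inside a box of dimensions $(2c+4)\times(2c+5)$. So Maker cannot be ``confined'' to a box of that size while the game is still running; whatever progress your phases are meant to measure, it is not the literal bounding-box dimensions. Without a precise invariant replacing this, the plan does not yield the stated bound.

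The paper's argument avoids geometry entirely. It classifies boundary edges as \emph{good}, \emph{bad}, or \emph{awful} according to their effect on $|\partial C|$ (your $E_k$), has Breaker claim them with priority good $>$ bad $>$ awful, and tracks the \emph{pair} $(v_k,w_k)$ where $v_k=4-E_k$ is your slack and $w_k$ counts unclaimed awful edges. Both coordinates are shown to lie in $\{0,\dots,2c+3\}\times\{0,\dots,2c+4\}$, and the pair is shown to strictly increase in lexicographic order within every window of $\lceil(2c+2)/m\rceil+1$ rounds. The bound $(2c+4)(2c+5)(\lceil(2c+2)/m\rceil+1)$ then falls out immediately as the product of the state-space size and the window length. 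The second coordinate $w_k$ is exactly what your single potential $E_k$ lacks: it handles the rounds in which Maker plays only non-awful edges (so $E_k$ is stationary) but Breaker still makes progress by forcing an awful edge into play on the next turn.
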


\Cref{thm:boost} then follows by combining \Cref{prop:lmtd-unlmtd,prop:lmtdBwin}.

We start by providing various definitions and assumptions that we need in our proof.
Firstly, without loss of generality, we may assume that Maker's graph is not only connected after each turn of hers, but also that she is adding edges to it, one by one, so that her graph, which we denote by $C$, is connected at any single point during her turn.
We then let $C(\ell)$ be her graph after Maker claimed $\ell$ edges in total, and we denote by $C_{k} \defined C(\sum_{i=1}^{k} m_i)$ Maker's graph after she played $k$ full turns.

Recall that we denote by $\boundary C(\ell)$ the edge boundary of $C(\ell)$, and note that we include in this boundary also the edges that Breaker has already claimed.
The set of those edges in the boundary of $C(\ell)$ that are yet unclaimed by Breaker at this point of the game is denoted by $\boundary_F C(\ell)$, which stands intuitively for the `free' boundary of $C(\ell)$.
While this definition may be ambiguous as $\boundary_F C(\ell)$ changes during the turn of Breaker, we will always make it clear to which particular point we refer to when using this notation.

\begin{definition}
\label{def:edgetypes}
    For any $\ell \ge 1$, we call an edge $e \in \boundary C(\ell)$:
    \begin{itemize}
        \item \emph{Awful} if $\card{\boundary(C(\ell)+e)} - \card{\boundary C(\ell)} \leq 1$.
        \item \emph{Bad} if it is not awful, but by claiming $e$, Maker creates at least one new awful edge $f$ in $\boundary(C(\ell) + e)$ such that $f$ touches $e$.
        \item \emph{Good} otherwise.
    \end{itemize}
\end{definition}

We now provide Breaker with a strategy.

\begin{strategy}[Breaker's strategy for the $(m,b)$ limited percolation game on $\ZZ^2$]
\label{str:B-bstd-lmtd}
    Let $k \ge 1$ and assume that by the end of her $k$-th turn Maker has claimed $\ell$ edges so far in the game for some $\ell \ge 0$.
    In his $k$-th turn, Breaker claims edges one by one from $\boundary_F(C(\ell))$ in the following order of priority:
    \begin{enumerate}
        \item good edges,
        \item bad edges,
        \item awful edges.
    \end{enumerate}
\end{strategy}

Note that if at some point Breaker cannot follow \Cref{str:B-bstd-lmtd}, then it means that there are no unclaimed boundary edges in Maker's graph, which in particular means that Breaker has won the game.

We make the following straightforward observation that does not require proof.

\begin{observation}\label{obs:awfulbad}
An awful edge stays awful until it is claimed by either player.
A bad edge either stays bad or becomes awful until it is claimed by either player.
\end{observation}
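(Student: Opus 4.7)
My plan is to reduce both parts of the observation to a simple monotonicity principle for the vertex set of Maker's graph. The key preliminary remark is that the awful/bad classification depends only on the subgraph $C(\ell)$ and the local geometry of $\ZZ^2$, not on which edges have been claimed by Breaker. Thus Breaker's moves are irrelevant to the status of a still-unclaimed edge, and it suffices to compare $C(\ell)$ with $C(\ell+1)=C(\ell)+e'$ for each single edge $e'$ added by Maker; across such an update one has the monotonicity $V(C(\ell))\subseteq V(C(\ell+1))$.

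First I would compute $\card{\boundary(C(\ell)+e)}-\card{\boundary C(\ell)}$ explicitly for an edge $e=\set{u,v}\in\boundary C(\ell)$ with $u\in V(C(\ell))$: the change equals $-1$ if $v\in V(C(\ell))$, and $2-2k$ if $v\notin V(C(\ell))$, where $k$ is the number of other neighbours of $v$ lying in $V(C(\ell))$. Hence $e$ is awful if and only if either both endpoints lie in $V(C(\ell))$, or exactly one endpoint $v$ lies outside $V(C(\ell))$ and $v$ has at least one further neighbour in $V(C(\ell))$. Both disjuncts are monotone when $V(C(\ell))$ is enlarged, so any awful edge in $\boundary C(\ell)$ which has not been claimed remains awful in $\boundary C(\ell+1)$, proving the first part.

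For the second part, fix a bad edge $e=\set{u,v}\in\boundary C(\ell)$. Since $e$ is not awful, the characterisation above forces $v\notin V(C(\ell))$ and $u$ to be the unique neighbour of $v$ in $V(C(\ell))$. Using that $\ZZ^2$ is triangle-free (so no neighbour of $u$ is a neighbour of $v$), a short case check shows that any new awful edge $f\in\boundary(C(\ell)+e)$ touching $e$ must take the form $f=\set{v,w}$ with $w\notin V(C(\ell))$, and that the awfulness of $f$ amounts to the existence of some neighbour $x\neq v$ of $w$ with $x\in V(C(\ell))$. Now pass to $C(\ell+1)$. If $v\in V(C(\ell+1))$ or $v$ gains a new neighbour in $V(C(\ell+1))$, then by the characterisation above $e$ has become awful, and we are done. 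Otherwise $v$'s only neighbour in $V(C(\ell+1))$ is still $u$, so $e$ is still not awful; but the same pair $(w,x)$ continues to witness badness, because $x\in V(C(\ell))\subseteq V(C(\ell+1))$, $w$ still lies outside $V(C(\ell+1))$ (else $v$ would acquire $w$ as a new neighbour in the component, contradicting the case), and $f=\set{v,w}$ is still a boundary edge of $C(\ell+1)+e$ and still awful there.

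Iterating this one-step argument across each of Maker's single-edge additions yields the observation. The only delicate point is the bad case: one must both confirm that $e$ can only leave the \emph{bad} status by becoming \emph{awful}, and exhibit a bad-witness in $C(\ell+1)$; both follow essentially for free from monotonicity of $V(C(\ell))$ together with the triangle-free structure of $\ZZ^2$, so I do not expect any real obstacle beyond careful case-checking.
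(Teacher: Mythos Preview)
The paper gives no proof of this observation --- it is stated as ``straightforward'' and left without argument. Your proposal supplies a correct and complete proof of what the paper omits, via the explicit characterisation of awful edges in terms of $V(C(\ell))$ and its monotonicity.

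One small slip: when $v\notin V(C(\ell))$, the boundary change is $2-k$, not $2-2k$. Indeed, adding $e=\{u,v\}$ removes $e$ itself from the boundary and introduces exactly those edges $\{v,w_i\}$ with $w_i\notin V(C(\ell))$, of which there are $3-k$; the edges $\{v,w_i\}$ with $w_i\in V(C(\ell))$ were already in $\partial C(\ell)$. This does not affect your argument, since the awful criterion $2-k\le 1$ and your $2-2k\le 1$ both reduce to $k\ge 1$, and everything downstream uses only the resulting characterisation (both endpoints in $V(C(\ell))$, or the outside endpoint has a second neighbour in $V(C(\ell))$), which is correct.
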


Further, let
\begin{equation*}
    v_k \defined 2\card{E(C_k)} + 4 - \card{\boundary C_k},
\end{equation*}
and let $w_k$ be the number of awful edges in $\boundary_F C_k$ after $k$ turns of both Maker and of Breaker, and set $v_0 = w_0 = 0$.
Let $\boundary_G C_k \subseteq \boundary_F C_k$ denote the subset of good edges.

We consider the lexicographic order on ordered pairs $\set{ (x;y) : x,y \in \RR }$.
That is, we have
\begin{equation*}
    (x;y) > (z;w) \leftrightarrow \set{ x > z } \text{ or } \set{ x = z \text{ and } y > w }.
\end{equation*}
When we write inequalities between ordered pairs, we always refer to this ordering.

\Cref{thm:boost} is implied by the following observation and proposition.

\begin{observation}
\label{obs:vkwk}
    Let $k \geq 1$ and assume that after $k$ turns of both Maker and Breaker, Breaker has not yet won.
    Then we have $0 \leq v_k \leq 2c+3$ and $0 \leq w_k \leq 2c+4$.
\end{observation}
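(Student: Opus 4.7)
The plan is to derive both bounds from a single double counting of $|\partial C_k|$. The nonnegativity $v_k \geq 0$ is just \Cref{lem:perimetric} applied to the connected graph $C_k$ (which gives $|\partial C_k| \leq 2|E(C_k)|+4$), and $w_k \geq 0$ is trivial since it is a count.

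For the upper bounds, I would first observe that Breaker has played exactly $2m$ edges in each of the first $k$ rounds. Indeed, if in some round $j \leq k$ Breaker had at most $2m$ unclaimed boundary edges available when his turn started, he would simply claim all of them, surround $C$ and win, contradicting the hypothesis. The second, slightly delicate step is to verify that every edge ever claimed by Breaker remains in $\partial C_k$: such an edge $e$ had at least one endpoint in $V(C_j) \subseteq V(C_k)$ at the time it was claimed (since Breaker plays only on the boundary in \Cref{str:B-bstd-lmtd}), and Maker cannot later claim it, so $e \notin E(C_k)$ while still touching $V(C_k)$. Combining this with $|\partial_F C_k| \geq 1$, which again follows from Breaker not yet having won (otherwise the component of $v_0$ in Maker's edges together with unclaimed edges would be $C_k$, hence finite), one obtains
\begin{equation*}
    |\partial C_k| \geq 2mk + |\partial_F C_k| \geq 2mk + 1.
\end{equation*}

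To conclude, I would apply \Cref{lem:perimetric} together with the limited-game constraint $|E(C_k)| = \sum_{j=1}^{k} m_j \leq km + c$ to get $|\partial C_k| \leq 2(km+c) + 4$, and substitute both bounds into the definitions. This yields
\begin{equation*}
    v_k \leq 2(km+c) + 4 - (2mk + 1) = 2c + 3,
    \qquad
    |\partial_F C_k| = |\partial C_k| - 2mk \leq 2c + 4,
\end{equation*}
and $w_k \leq |\partial_F C_k| \leq 2c + 4$ since the awful unclaimed boundary edges form a subset of $\partial_F C_k$. The main obstacle, beyond routine bookkeeping, is the observation that all of Breaker's past moves stay on the current boundary of Maker's graph; it uses crucially that in the limited game Maker's graph only grows and always contains $v_0$, so the sequence $V(C_j)$ is monotone in $j$, which is precisely what \Cref{def:bstd-lmtd-game} guarantees.
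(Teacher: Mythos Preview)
Your argument is correct and follows the same route as the paper's proof: both derive the bounds from $\card{\boundary_F C_k} = \card{\boundary C_k} - 2mk$ together with \Cref{lem:perimetric} and the constraint $\card{E(C_k)} \le km + c$. The only difference is that you spell out why all $2mk$ of Breaker's past moves remain in $\boundary C_k$ (using monotonicity of $V(C_j)$ in the limited game), which the paper uses implicitly when writing that identity.
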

\begin{proof}
Firstly, recall that an awful edge is in particular an unclaimed edge, so by the definition of $w_k$ and by \Cref{lem:perimetric} we have
\begin{equation*}
    0 \le w_k \le \card{\boundary_F C_k} = \card{\boundary C_k} - 2mk \le 2c + 4.
\end{equation*}

Secondly, note that $v_k \geq 0$ simply by \Cref{lem:perimetric}.
Since Breaker has not won the game yet by the end of his $k$-th turn, and as he claims only edges from the boundary of Maker's graph, we have $\card{\boundary C_k} \ge 2mk+1$, and thus
\begin{equation*}
    v_k = 2\card{E(C_k)} + 4 - \card{\boundary C_k} \le 2c+3. \qedhere
\end{equation*}
\end{proof}

\begin{proposition}
\label{prop:vkwk}
    Let $k \ge 1$ and $c' = \ceil{\frac{2c+2}{m}}$.
    Assume that after $k+c'$ turns of both Maker and Breaker, Breaker has not yet won the game.
    Then there exists some $1 \leq r \leq c'+1$, such that $(v_{k+r};w_{k+r}) > (v_k;w_k)$.
\end{proposition}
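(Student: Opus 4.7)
My plan is to proceed by contradiction: suppose $(v_{k+r}; w_{k+r}) \leq (v_k; w_k)$ for every $r \in \{1, \ldots, c'+1\}$. The key initial observation is that $v$ is non-decreasing along the game. When Maker plays a good or bad edge, she attaches a new leaf vertex to $V(C)$, so $e(C)$ grows by $1$ and $\card{\boundary C}$ grows by exactly $2$, leaving $v$ unchanged; when she plays an awful edge, $\card{\boundary C}$ grows by at most $1$, so $v$ strictly increases by at least $1$; and Breaker's moves do not affect $v$ at all. The hypothesis therefore forces $v_{k+r} = v_k$ throughout the window, so Maker plays no awful edges. Every one of her moves attaches a new leaf to $V(C)$, hence $C$ remains a tree and no new chord boundary edges are created throughout the window.

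I would then decompose $w = h_F + \alpha$, where $h_F$ counts the unclaimed chord boundary edges (non-increasing, bounded by $v_k$) and $\alpha$ the unclaimed non-chord awful edges. A careful local analysis shows that a good Maker move creates no new awful boundary edge whatsoever: the three new boundary edges at the attached leaf are non-awful by the definition of good, and by direct inspection no existing boundary edge elsewhere changes its awfulness status. By contrast, a bad move creates at least one new non-chord awful edge adjacent to the played edge. Moreover, by Breaker's priority rule (good $>$ bad $>$ awful), any round in which Breaker takes an awful edge is one in which he first exhausts every good and bad edge of $\boundary_F C$; in such rounds the post-Maker non-awful boundary has size at most $2m-1$.

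The main quantitative engine is the Breaker surplus. The cumulative constraint $\sum_{j \leq K} m_j \leq Km + c$ applied at $K = k + c' + 1$ yields $M \defined \sum_{i=1}^{c'+1} m_{k+i} \leq (c'+1)m + c$, whereas Breaker claims $2m(c'+1)$ edges; so
\begin{equation*}
    2m(c'+1) - M \,\geq\, m(c'+1) - c \,\geq\, m + c + 2,
\end{equation*}
using $c'\, m \geq 2c + 2$. Since each Maker move adds $2$ to $\card{\boundary_F C}$ and each Breaker move removes $1$, the free boundary drops by at least $2(m + c + 2)$ across the window. The hardest part I anticipate is combining this surplus with the awful-edge analysis: one must track that Breaker's chord takes are bounded by $v_k \leq 2c + 3$ and that his non-chord awful takes are tied to the number of Maker's bad moves via the requirement $w \leq w_k$, while the priority rule forces consumption of Breaker's surplus whenever awful takes occur. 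I expect the contradiction to arise either from the free boundary being driven to zero strictly inside the window, which (given the drop of $2(m+c+2)$ cannot be matched while $v$ stays fixed and Maker honours her cumulative budget) forces $v$ to have jumped earlier, or from a round in which a newly-created awful edge must remain in $\boundary_F C$, giving $w_{k+r} > w_k$ and contradicting the hypothesis.
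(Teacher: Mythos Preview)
Your local analysis is essentially correct and matches the paper's setup: $v$ is non-decreasing and strictly increases precisely when Maker plays an awful edge; non-awful moves attach a genuinely new leaf (so no new chord boundary edges); and a good move creates no new awful boundary edge anywhere, while a bad move creates at least one non-chord awful edge incident to the played edge. These are exactly the structural facts one needs.

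The problem is your ``main quantitative engine''. You correctly compute the surplus $2m(c'+1) - M \geq m+c+2$, but then claim that $\card{\partial_F C}$ drops by at least $2(m+c+2)$. This is an arithmetic slip. Each Maker move adds $2$ and each Breaker move subtracts $1$, so the net change over the window is
\begin{equation*}
    2M - 2m(c'+1) \;=\; -2\bigl(m(c'+1) - M\bigr),
\end{equation*}
and $m(c'+1) - M$ can be as small as $-c-1$ (take $M = (c'+1)m + c + 1$, which is allowed by the cumulative constraint together with \Cref{lem:sum-mi}). So the free boundary may \emph{increase} by up to $2c+2$ across the window; there is no guaranteed drop at all, and the ``driven to zero'' scenario does not materialise.

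The surplus $2m(c'+1) - M$ is indeed the right quantity, but it controls \emph{good} edges, not the whole free boundary. By \Cref{lem:fewgoodedges} each Maker move produces at most one new good edge, so over the window at most $\card{\partial_G C_k} + M$ good edges are ever available, while Breaker (by priority) would consume $2m(c'+1)$ of them if he never touched a bad or awful edge. This is the counting the paper uses in \Cref{clm:crucial}: it forces some round $k+j$ in which Breaker claims a bad or awful edge, hence $\partial_G C_{k+j} = \emptyset$ at the end of that round, hence Maker's first move in round $k+j+1$ is bad or awful. From there a short case analysis on whether that move (or a subsequent one) is awful versus bad, and whether Breaker reaches awful edges in round $k+j+1$, yields either $v_{k+j+1} > v_k$, or $w_{k+j+1} > w_k$, or $v_{k+j+2} > v_k$.

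Your endgame sketch (``I expect the contradiction to arise either from\ldots'') is too vague to stand on its own, and as written depends on the incorrect free-boundary drop. Redirect the surplus to count good edges as above, and then your awful-edge bookkeeping (the $h_F + \alpha$ decomposition, and the observation that a bad move creates a new non-chord awful edge) will slot in to finish the $w$-increases case cleanly.
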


Before proving \Cref{prop:vkwk}, let us see why it is useful for us.

\begin{claim}
\Cref{prop:lmtdBwin} is implied by \Cref{obs:vkwk} and \Cref{prop:vkwk}.
\end{claim}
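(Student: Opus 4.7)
The plan is to derive the bound purely by a pigeonhole argument on the pair $(v_k, w_k)$: Observation 4.6 confines this pair to a small finite lex-ordered set, while Proposition 4.7 says it strictly lex-increases within every window of $c'+1$ rounds, where $c' = \ceil{(2c+2)/m}$. Combining these forces the game to terminate.

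First I would invoke Observation 4.6: whenever Breaker has not yet won by round $k \ge 1$, the pair $(v_k, w_k)$ lies in $\set{0, 1, \ldots, 2c+3} \times \set{0, 1, \ldots, 2c+4}$, a lex-ordered set of cardinality exactly $(2c+4)(2c+5)$. In particular, any strictly lex-increasing sequence drawn from this set has length at most $(2c+4)(2c+5)$.

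Next, set $N \defined (2c+4)(2c+5)(c'+1)$ and suppose, for contradiction, that Breaker has not won by round $N$. Starting from $k_0 \defined 1$, I would apply \Cref{prop:vkwk} inductively: given $k_i$ with $k_i + c' \le N$ (which is then ensured by Breaker not having won by round $N$), the proposition produces $r_i \in \set{1, \ldots, c'+1}$ such that $(v_{k_{i+1}}, w_{k_{i+1}}) > (v_{k_i}, w_{k_i})$, where $k_{i+1} \defined k_i + r_i$. A trivial induction gives $k_i \le 1 + i(c'+1)$. Taking $T \defined (2c+4)(2c+5)$, the largest index at which \Cref{prop:vkwk} is invoked satisfies
\begin{equation*}
    k_{T-1} + c' \le 1 + (T-1)(c'+1) + c' = T(c'+1) = N,
\end{equation*}
so the hypothesis of \Cref{prop:vkwk} is met at every step up to $i = T-1$.

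The resulting chain $(v_{k_0}, w_{k_0}) < (v_{k_1}, w_{k_1}) < \cdots < (v_{k_T}, w_{k_T})$ consists of $T + 1 = (2c+4)(2c+5) + 1$ distinct elements of a set of size $(2c+4)(2c+5)$, a contradiction. Hence Breaker must win within the first $N$ rounds, which is exactly the bound claimed in \Cref{prop:lmtdBwin}. There is essentially no obstacle here beyond bookkeeping to align the arithmetic: all the real work has already been done in \Cref{obs:vkwk} and \Cref{prop:vkwk}.
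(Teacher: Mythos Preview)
Your argument is essentially identical to the paper's: both extract a strictly lex-increasing subsequence $(v_{k_i},w_{k_i})$ with gaps at most $c'+1$, confine it via \Cref{obs:vkwk} to a set of size $(2c+4)(2c+5)$, and finish by pigeonhole. The paper's version is terser (it simply writes ``the result follows''), while your explicit bookkeeping spells out the same computation.
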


\begin{proof}
    Consider the set
    \begin{equation*}
        S \defined \set[\big]{ (x;y) \st x \in  \set{0,1,2,\dotsc,2c+3} ,\, y \in \set{0,1,2,\dotsc,2c+4} }.
    \end{equation*}
    We clearly have $\card{S} = (2c+4)(2c+5)$.
    By \Cref{obs:vkwk} we have $(v_k;w_k) \in S$ for every $k$ such that Breaker has not yet won after $k$ rounds of the game.
    By \Cref{prop:vkwk} we can find strictly increasing sequence
    \begin{equation*}
        (v_{i_1};w_{i_1}) < (v_{i_2};w_{i_2}) < \dotsb
    \end{equation*}
    with $i_1=1$ and $i_{t+1}-i_t \leq c'+1$ for each $t \geq 1$.
    The result follows.
\end{proof}

It is left to prove \Cref{prop:vkwk}. We start by proving two easy lemmas.

\begin{lemma}\label{lem:vk}
    Let $k \geq 0$ and assume that after $k$ rounds of the game Breaker has not won yet.
    Assume that out of $m_{k+1}$ edges that Maker claimed in her $(k+1)$-th turn, $t$ were awful at the time they were claimed, for some $0 \le t \le m_{k+1}$.
    Then
    \begin{equation*}
        \card{\boundary C_{k+1}} - \card{\boundary C_k} \leq 2 m_{k+1} -t.
    \end{equation*}
    In particular we get
    \begin{equation*}
        v_{k+1} \ge v_k + t.
    \end{equation*}
\end{lemma}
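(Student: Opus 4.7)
The plan is to proceed via a telescoping-sum argument: I will bound the change in $\card{\boundary C}$ contributed by each individual edge Maker adds in round $k+1$ and then sum over her $m_{k+1}$ claimed edges. The key setup, assumed throughout this section, is that Maker plays by adding edges one at a time while keeping her graph $C$ connected, so every newly claimed edge $e = \set{u,v}$ has at least one endpoint already in $V(C)$. This reduces the problem to a short case analysis on each added edge.

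The main claim to verify is that a single new edge changes $\card{\boundary C}$ by at most $+2$. When both endpoints of $e$ already lie in $V(C)$, the edge $e$ (which was in $\boundary C$) leaves the boundary and no new boundary edges appear, so the change is $-1$. When exactly one endpoint, say $v$, is new to $V(C)$, the edge $e$ is removed from the boundary while the three remaining $\ZZ^2$-edges at $v$ become incident to $V(C+e)$; but any such edge whose other endpoint was already in $V(C)$ was already a boundary edge and so contributes nothing new. Letting $j$ denote the number of these three edges with other endpoint in $V(C)$, the net change is $(3-j)-1 = 2-j$, which is at most $2$.

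Recall from \Cref{def:edgetypes} that an edge is awful at the moment it is claimed exactly when the change in $\card{\boundary C}$ it induces is at most $+1$. Partitioning Maker's $m_{k+1}$ claimed edges according to whether they were awful at the time of claiming ($t$ of them, each contributing at most $+1$ to the boundary) or not (the remaining $m_{k+1}-t$, each contributing at most $+2$), telescoping yields
\begin{equation*}
    \card{\boundary C_{k+1}} - \card{\boundary C_k} \le 2(m_{k+1} - t) + t = 2m_{k+1} - t,
\end{equation*}
which is the first inequality of the lemma.

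For the second inequality, unpacking the definition of $v_k$ and using $\card{E(C_{k+1})} - \card{E(C_k)} = m_{k+1}$ gives $v_{k+1} - v_k = 2m_{k+1} - \p[\big]{\card{\boundary C_{k+1}} - \card{\boundary C_k}} \ge t$, by the bound just established. The only point that requires care is the per-edge case analysis, which is clean thanks to $\ZZ^2$ being $4$-regular and to Maker's connectedness convention; I do not foresee any serious obstacle.
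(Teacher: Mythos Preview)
Your proof is correct and follows essentially the same approach as the paper: both argue edge-by-edge via telescoping, observing that each claimed edge removes itself from the boundary and adds at most three new boundary edges (giving a change of at most $+2$), while awful edges by definition give a change of at most $+1$, and then both unwind the definition of $v_k$ for the second inequality. Your case analysis distinguishing whether the new edge has one or two endpoints already in $V(C)$ is slightly more explicit than the paper's, but the underlying argument is identical.
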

\begin{proof}
When claimed by Maker, an edge is removed from the edge boundary of the new component, and at most three new edges are added to it, which means that $\card{\boundary C(\ell+1)} - \card{\boundary C(\ell)} \leq 2$, for any $\ell \ge 1$.
If the $\ell$-th edge claimed by Maker is awful, then by definition we have $\card{\boundary C(\ell+1)} - \card{\boundary C(\ell)} \leq 1$.
The first result follows by combining these two observations.

Then in particular, by the definition of $v_k$ and by the above, we have
\begin{align*}
    v_{k+1} &= 2\card{E(C_{k+1})} + 4 - \card{\boundary C_{k+1}} \\
    &\ge 2 \p[\big]{\card{E(C_k)} + m_{k+1} } + 4 - \card{\boundary C_k} - 2m_{k+1} + t = v_k + t. \qedhere
\end{align*}
\end{proof}

\begin{lemma}\label{lem:sum-mi}
Let $k \geq 1$ and assume that after $k$ rounds of the game, Breaker has not yet won.
Then
\begin{equation*}
    \sum_{i=1}^k m_i \geq km-1.
\end{equation*}
\end{lemma}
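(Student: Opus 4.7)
The plan is to lower-bound Maker's total edge count by combining the perimetric inequality (\Cref{lem:perimetric}) with the fact that, under \Cref{str:B-bstd-lmtd}, every edge Breaker claims is a boundary edge of Maker's component at the time of its claim.

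First, I would observe that the $2mk$ edges Breaker has claimed by the end of round $k$ all lie in $\boundary C_k$. Indeed, an edge $e$ claimed by Breaker at some intermediate time $\ell_0$ satisfies $e \notin E(C(\ell_0))$ and has an endpoint in $V(C(\ell_0))$. Since $V(C(\ell))$ is nondecreasing in $\ell$ and Breaker's edges are never added to $E(C)$, we have $e \in \boundary C(\ell)$ for every $\ell \ge \ell_0$, and in particular $e \in \boundary C_k$. (Implicitly here I would also note that if Breaker ever claimed fewer than $2m$ edges in a round it would be because the free boundary was empty, in which case he would already have won.)

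Next, I would show that if Breaker has not yet won, then at least one edge of $\boundary C_k$ is still unclaimed. Otherwise, every edge leaving $V(C_k)$ would belong to Breaker, so the connected component of $v_0$ in the graph spanned by Maker's edges together with the unclaimed edges would be precisely the finite graph $C_k$, contradicting the assumption that Breaker has not won the limited game. Combined with the previous paragraph, this forces
\begin{equation*}
    \card{\boundary C_k} \ge 2mk + 1.
\end{equation*}

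Finally, since Maker keeps her graph connected after each turn (by the rules of the limited game), \Cref{lem:perimetric} applies to $C_k$ and gives $\card{\boundary C_k} \le 2e(C_k) + 4$. Chaining the two inequalities yields $e(C_k) \ge mk - \tfrac{3}{2}$, and integrality of $e(C_k)$ improves this to $e(C_k) \ge mk - 1$. As $e(C_k) = \sum_{i=1}^k m_i$, the desired bound follows. I expect no real obstacle here; the only subtle point is the monotonicity argument in the first step, ensuring that Breaker's previously claimed boundary edges remain in $\boundary C_k$.
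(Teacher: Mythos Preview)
Your proposal is correct and takes essentially the same approach as the paper: both combine the fact that Breaker's $2mk$ claimed edges all lie in $\boundary C_k$ with \Cref{lem:perimetric}. The paper phrases this as a contradiction (assume $\sum m_i \le km-2$, deduce $\card{\boundary C_k}\le 2mk$, hence Breaker has already won), while you give the direct version and spell out the monotonicity argument that the paper leaves implicit.
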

\begin{proof}
Assume for contradiction that $\sum_{i=1}^k m_i \leq km-2$.
Then by \Cref{lem:perimetric}, we have $\card{\boundary C_k} \le 2(km-2)+4 = 2km$.
However, within his first $k$ turns, Breaker claims precisely $2km$ edges, all in $\boundary C_k$.
Thus he must have won at latest after $k$ rounds, which is a desired contradiction.
\end{proof}

Next we show that we never create many good edges.

\begin{lemma}\label{lem:fewgoodedges}
For any $\ell \geq 1$, the number of good edges in $\boundary C(\ell+1)$ is at most one more than the number of good edges in $\boundary C(\ell)$, that is
\begin{equation*}
    \card{\boundary_G C(\ell+1)} - \card{\boundary_G C(\ell)} \le 1.
\end{equation*}
Moreover, $\card{\boundary_G C(1)}, \card{\boundary_G C(2)} \le 2$.
\end{lemma}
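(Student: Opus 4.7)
The plan is to derive a clean geometric characterisation of good edges and then exploit the bipartite structure of $\ZZ^2$. Unpacking \Cref{def:edgetypes}, a boundary edge $f = \{a,b\}$ with $a \in V(C(\ell))$ and $b \notin V(C(\ell))$ is good if and only if $V(C(\ell)) \cap N_{\leq 2}(b) \subseteq \{a,\, 2a-b\}$, where $N_{\leq 2}(b)$ is the set of vertices of $\ZZ^2$ at graph distance at most $2$ from $b$ and $2a-b$ is the antipode of $b$ through $a$. Indeed, $f$ not being awful rules out each of $b$'s three other neighbours from $V(C(\ell))$ (three distance-$1$ constraints), and $f$ not being bad additionally rules out, for each such neighbour $c$, the three neighbours of $c$ other than $b$ (nine distance-$2$ constraints which together cover exactly seven of the eight vertices at distance $2$ from $b$; the omitted one is precisely the antipode $2a-b$, reachable from $b$ only via $a$).

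To prove the main inequality, consider the edge $e = \{u,v\}$ that Maker adds. If $v \in V(C(\ell))$ then $\boundary C(\ell+1) \subseteq \boundary C(\ell)\setminus\{e\}$ and the inequality is immediate. Otherwise let $w_1,w_2,w_3$ be $v$'s other three neighbours; the potentially new boundary edges are the $\{v,w_i\}$ with $w_i \notin V(C(\ell))$. Applying the characterisation with $a=v$, $b=w_i$ in $C(\ell+1)$, the edge $\{v,w_i\}$ is good iff $(V(C(\ell)) \cup \{v\}) \cap N_{\leq 2}(w_i) \subseteq \{v,\,2v-w_i\}$. Since $\ZZ^2$ is triangle-free, $u$ and $w_i$ are not adjacent, so $u$ lies at distance exactly $2$ from $w_i$ and thus $u \in V(C(\ell)) \cap N_{\leq 2}(w_i)$. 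Hence we need $u = 2v - w_i$, equivalently $w_i = 2v-u$, which singles out a unique direction; so at most one of the three new boundary edges can be good. Moreover, the controlling intersection $V(C(\cdot)) \cap N_{\leq 2}(b)$ only grows when passing from $C(\ell)$ to $C(\ell+1)$, so no existing boundary edge can be promoted to good status. Combining, $\card{\boundary_G C(\ell+1)} - \card{\boundary_G C(\ell)} \leq 1$, and in fact the difference is at most $0$ when $e$ itself was good.

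For the base cases the characterisation makes everything explicit. When $\ell = 1$, $C(1)$ consists of two adjacent vertices $u_0,v_0$; for any boundary edge $f = \{a,b\}$ with $a \in \{u_0,v_0\}$, the other vertex of $C(1)$ lies at distance exactly $2$ from $b$, so the characterisation forces it to coincide with $2a-b$, uniquely pinning down $b$. Each of $u_0, v_0$ thus contributes exactly one good edge to $\boundary C(1)$, giving $\card{\boundary_G C(1)} = 2$. For $\ell = 2$, since $\ZZ^2$ is simple and Maker plays connected, $C(2)$ is a path $p_0 p_1 p_2$ of length two. A boundary edge through the middle vertex $p_1$ cannot be good: both $p_0$ and $p_2$ lie at distance $2$ from $b$, and the characterisation would require both to equal $2p_1 - b$, contradicting $p_0 \neq p_2$. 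A boundary edge through an endpoint, say $p_0$, is good only when $p_1 = 2p_0 - b$ (which fixes $b$), and a direct coordinate check shows $p_2$ then lies at distance $3$ from $b$, so the characterisation is satisfied. Hence $\card{\boundary_G C(2)} \leq 2$.

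The real content lies in the geometric characterisation in the first paragraph; one must correctly identify that among the thirteen vertices of $N_{\leq 2}(b)$ the only two exempted from the constraint are $a$ and the antipode $2a-b$. Once this is in place, the remainder is a short direct verification, with the triangle-freeness of $\ZZ^2$ entering only implicitly, through the fact that each $w_i$ lies at distance exactly $2$ (and not $1$) from $u$.
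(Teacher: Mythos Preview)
Your proof is correct. The core geometric content is the same as the paper's: among the (at most three) new boundary edges created when Maker plays $e=\{u,v\}$, only the collinear continuation $\{v,2v-u\}$ can be good, and no existing edge is promoted. The paper establishes this by direct case analysis on the direction of an edge of $C(\ell)$ incident to $u$, leaving the verification that the two perpendicular new edges are not good as implicit; the base cases are dismissed ``by inspection''.

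Your route differs in that you first extract an explicit characterisation of goodness, namely that $\{a,b\}$ with $a\in V(C(\ell))$, $b\notin V(C(\ell))$ is good iff $V(C(\ell))\cap N_{\le 2}(b)\subseteq\{a,2a-b\}$. This repackaging buys you a uniform argument: both the non-promotion statement (the controlling set only grows) and the uniqueness of the new good edge (forcing $u=2v-w_i$) drop out of the same criterion, and the base cases $\ell=1,2$ become short computations rather than unguided inspection. The paper's version is terser but relies more on the reader filling in why the perpendicular new edges fail to be good; your characterisation makes that step transparent. Both proofs ultimately hinge on the same bipartite/triangle-free fact that $u$ sits at distance exactly $2$ from each $w_i$.
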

\begin{proof}
That $\card{\boundary_G C(1)}, \card{\boundary_G C(2)} \le 2$ follows by inspection, so it only remains to prove the first assertion.

Let $\ell \geq 1$.
By \Cref{obs:awfulbad}, no edge that was bad or awful in $\boundary C(\ell)$ can be good in $\boundary C(\ell+1)$.
So it is enough if we rule out the case that out of at most three edges in $\boundary C(\ell+1) \setminus \boundary C(\ell)$, two different ones would be good.

Let $e$ be the $l$th edge claimed by Maker. By symmetry, we may assume that $e$ is horizontal, i.e. that  $e= \set{ (x,y), (x+1,y) }$ for some $x,y \in \ZZ$.
Further, since Maker always claims an edge in the edge boundary of her only connected component and due to symmetry again, we only need to consider the following two cases.
\begin{enumerate}
    \item The edge $\set[\big]{(x-1,y), (x,y)}$ was already in $C(\ell)$.
    \item The edge $\set[\big]{(x,y+1), (x,y)}$ was already in $C(\ell)$.
\end{enumerate}
In either case, we have that $\boundary C(\ell+1) \setminus \boundary C(\ell)$ is contained in the set
\begin{equation*}
    \set[\Big]{ \set[\big]{(x+1,y), (x+2,y)}, \set[\big]{(x+1,y), (x+1,y-1)}, \set[\big]{(x+1,y), (x+1,y+1)} },
\end{equation*}
and out of these three edges, only $\set[\big]{(x+1,y), (x+2,y)}$ may be in $\boundary_G C(\ell+1)$.
\end{proof}

\Cref{lem:fewgoodedges} has the following corollary.

\begin{corollary}\label{cor:fewgoodedges}
    For any $k \ge 1$, at the end of Breaker's $k$-th turn, we have
    \begin{equation*}
        \card{\boundary_G C_k} \le (c - mk)_+,
    \end{equation*}
    where $n_+ \defined \max \set{n, 0}$.
\end{corollary}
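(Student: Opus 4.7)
The plan is to induct on $k$, using \Cref{lem:fewgoodedges} to bound how fast good edges can appear during Maker's turn and the good-first priority of \Cref{str:B-bstd-lmtd} to bound how fast they disappear during Breaker's. Write $g_k \defined \card{\boundary_G C_k}$ at the end of Breaker's $k$-th turn, with $g_0 = 0$. During Maker's $k$-th turn she claims $m_k$ edges, each raising $\card{\boundary_G C}$ by at most one by \Cref{lem:fewgoodedges}, so just before Breaker plays there are at most $g_{k-1} + m_k$ good edges in $\boundary_F C$. By \Cref{str:B-bstd-lmtd} Breaker then claims $\min(2m,\, g_{k-1} + m_k)$ good edges (since he exhausts his good-priority bucket before moving to bad or awful edges), yielding the Lindley-type recurrence
\begin{equation*}
    g_k \;\le\; \max\!\bigl(0,\; g_{k-1} + m_k - 2m\bigr).
\end{equation*}

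The plan from here is to iterate this recurrence together with Maker's cumulative budget constraint $\sum_{i=1}^k m_i \le km + c$ from \Cref{def:bstd-lmtd-game}. In any round where Breaker saturates his $2m$ good-claim budget, the recurrence simplifies to the clean inequality $g_k \le g_{k-1} + m_k - 2m$. Starting from $g_0 = 0$ and telescoping over all such rounds yields $g_k \le \sum_{i=1}^k m_i - 2mk$, which by the cumulative constraint is at most $(km + c) - 2mk = c - mk$; since $g_k \ge 0$ trivially, this gives the desired $g_k \le (c - mk)_+$.

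The main obstacle is handling the outer $\max(0, \cdot)$ in the recurrence honestly. In a round where Breaker's good-claim budget is not saturated (that is, $g_{k-1} + m_k < 2m$), the recurrence truncates at zero and in principle Maker could bank budget for a future ``burst''. The careful version of the argument needs a case split on which rounds saturate Breaker and which do not, together with the observation that every unsaturated round actually forces $g_k = 0$ at its end, so that applying the cumulative constraint from the last unsaturated round $j$ onward rather than from round $1$ still yields $\sum_{i=j+1}^k m_i - 2m(k-j) \le (c - mk)_+$. This final step, in which the full cumulative form of Maker's constraint is used rather than only its per-round consequence $m_k \le m + c$, is where the heart of the proof lies.
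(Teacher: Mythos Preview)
Your recurrence $g_k \le \max(0,\, g_{k-1} + m_k - 2m)$ is set up correctly (modulo the initialisation $g_0 = 0$, which needs the small-$\ell$ cases of \Cref{lem:fewgoodedges}), but the final step does not close. Unwinding a Lindley recursion of this form gives
\[
g_k \;\le\; \max_{0 \le j \le k}\Bigl[\,\sum_{i=j+1}^k m_i - 2m(k-j)\Bigr],
\]
and Maker's cumulative budget $\sum_{i=1}^k m_i \le km + c$ controls only the $j = 0$ term. For $j \ge 1$ the budget yields merely $\sum_{i=j+1}^k m_i - 2m(k-j) \le c - mk + 2mj$, strictly larger than $c - mk$. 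Concretely, the feasible schedule $m_1 = m$, $m_2 = m + c$ makes the $j = 1$ tail equal to $c - m$, which exceeds $(c - 2m)_+$. Your assertion that ``applying the cumulative constraint from the last unsaturated round $j$ onward'' recovers the bound is exactly the point of failure: the constraints bound $\sum_{i=1}^{k} m_i$ from above but give no lower bound on $\sum_{i=1}^{j} m_i$, so nothing forces Maker to have spent $2mj$ edges by round $j$, and the missing $2mj$ cannot be recovered.

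The paper's argument is structurally different: rather than a one-step recurrence, it uses \Cref{lem:fewgoodedges} \emph{globally}. After $\ell \ge 2$ Maker edges there are at most $\ell$ good edges in the boundary, irrespective of how the rounds were split, and Breaker's $2mk$ good-first claims are deducted from that single pool of size $\card{E(C_k)}$ to reach $\bigl(\card{E(C_k)} - 2mk\bigr)_+ \le (c - mk)_+$. Your per-round recurrence throws away precisely this global accounting each time it truncates at zero --- you reset to $g_j = 0$ and forget that Breaker has already consumed part of the at-most-$\card{E(C_j)}$ good-edge budget --- and that discarded information is what is needed to bound the $j \ge 1$ tails.
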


\begin{proof}
If $m \ge 3$, then we can assume $\card{E(C_1)} \ge 2$, otherwise Breaker wins already in the first round.
And in the case when $\card{E(C_1)} \ge 2$, by \Cref{lem:fewgoodedges}, we get $\card{\boundary_G C_1} \le \card{E(C_1)}$ at the end of Maker's first turn.
In his $k$-th turn, Breaker claims at least $\min \set{ 2m, \card{\boundary_G C_k} }$ good edges.
As this holds for any $k \ge 1$, at the end of Breaker's $k$-th turn we have
\begin{equation*}
    \card{\boundary_G C_k} \le \p[\big]{\card{E(C_k)} - 2mk }_+ \le (c - mk)_+.
\end{equation*}

If $m \in \set{1,2}$ and $m_1 \le 1$, then although we can have $\card{\boundary_G C_1} > \card{E(C_1)} $ at the end of Maker's first turn, we still have $\card{\boundary_G C_1} = 0$ by the end of Breaker's turn, and the result follows similarly to the first case.
\end{proof}

We are now ready to prove \Cref{prop:vkwk}.

\begin{proof}[Proof of \Cref{prop:vkwk}]
The crucial part of our proof is the following claim.

\begin{claim}\label{clm:crucial}
There exists $1 \le j \le c'-1$, such that in his $(k+j)$-th turn Breaker claimed a bad or an awful edge.
\end{claim}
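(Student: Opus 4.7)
The plan is to argue by contradiction. Suppose that in none of his turns $(k+1), (k+2), \ldots, (k+c'-1)$ does Breaker claim a bad or an awful edge. By \Cref{str:B-bstd-lmtd}, Breaker's priority ordering (good, then bad, then awful) forces him to claim exactly $2m$ good edges in each such turn, since otherwise he would either win outright (running out of unclaimed boundary edges) or be compelled to move on to a bad or awful edge once the good supply is exhausted. Hence over the interval he consumes $2m(c'-1)$ good edges from the free boundary.

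For the supply side, \Cref{cor:fewgoodedges} gives $\card{\boundary_G C_k} \leq (c-mk)_+ \leq c$ at the end of Breaker's $k$-th turn, while \Cref{lem:fewgoodedges} (applied $m_{k+j}$ times across Maker's $(k+j)$-th turn) bounds the growth of $\card{\boundary_G C}$ by $m_{k+j}$ per Maker's turn. Summing, the total supply of good edges over the interval is at most $(c-mk)_+ + \sum_{j=1}^{c'-1} m_{k+j}$. Combining the game's bias constraint $\sum_{j=1}^{k+c'-1} m_j \leq (k+c'-1)m + c$ with \Cref{lem:sum-mi}'s lower bound $\sum_{j=1}^{k} m_j \geq km - 1$ (valid since, by hypothesis of \Cref{prop:vkwk}, Breaker has not yet won), I obtain $\sum_{j=1}^{c'-1} m_{k+j} \leq (c'-1)m + c + 1$.

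Plugging this into the supply-demand inequality yields $2m(c'-1) \leq (c-mk)_+ + (c'-1)m + c + 1$, which rearranges to
\[
  m(c'-1) \;\leq\; (c-mk)_+ + c + 1.
\]
On the other hand, $c' = \ceil{(2c+2)/m}$ gives $mc' \geq 2c + 2$, hence $m(c'-1) \geq 2c + 2 - m$. A short case split finishes the argument: if $mk \geq c$ then $(c-mk)_+ = 0$ and we need $2c + 2 - m > c + 1$, which holds as soon as $c \geq m$; if $mk < c$ then $(c-mk)_+ = c - mk$ and we need $2c + 2 - m > 2c - mk + 1$, i.e.\ $mk > m - 1$, which follows from $k \geq 1$. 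The main obstacle is the narrow regime $c \leq m - 1$ where Case A applies but does not directly yield a contradiction; there, however, one must have $c' \leq 2$, so the interval has length at most one, and it suffices to use the sharper single-turn bound $G_1' \leq G_0 + m_{k+1} \leq 0 + (m + c + 1)$. This already contradicts $G_1' \geq 2m$ whenever $c \leq m - 2$; the remaining case $c = m - 1$ saturates the bound only if Maker plays $2m$ bad or awful edges in her $(k+1)$-th turn, which by our earlier observations strictly increases $v$ or $w$ and therefore yields the conclusion of the proposition by an alternative route.
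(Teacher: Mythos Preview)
Your overall approach matches the paper's: assume Breaker claims only good edges in turns $k+1,\ldots,k+c'-1$, bound the total supply via \Cref{cor:fewgoodedges} and \Cref{lem:fewgoodedges}, and compare with the demand $2m(c'-1)$ to reach $m(c'-1)\le(c-mk)_++c+1$. The paper then simply writes ``contradicting $c'=\lceil(2c+2)/m\rceil$'' without checking; you rightly observe that this is not automatically a contradiction when $mk\ge c$ and $c\le m-1$, and attempt to handle those cases separately.

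The patch, however, has a genuine gap. For $c=m-1$ you abandon the Claim and try to establish \Cref{prop:vkwk} directly, but the two key assertions are not justified: that saturation of the bound forces every one of Maker's $2m$ edges in turn $k+1$ to be bad or awful requires the observation that playing a good edge cannot \emph{strictly} increase $\card{\partial_G C}$, which is stronger than \Cref{lem:fewgoodedges} as stated; and ``by our earlier observations strictly increases $v$ or $w$'' needs an actual argument that, when all $2m$ played edges are bad, the newly created awful edges are pairwise distinct, distinct from the $w_k$ pre-existing awful edges, and survive Breaker's all-good turn, so that $w_{k+1}>w_k$. Moreover, your single-turn analysis tacitly assumes $c'=2$; when $c'=1$ (equivalently $2c+2\le m$) the range $1\le j\le c'-1$ is empty and the Claim as written is vacuously false, so no proof can exist there --- this is a defect in the paper's formulation that you should flag rather than silently absorb into your case split.
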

\begin{proof}[Proof of \Cref{clm:crucial}]
Assume for contradiction that in his $(k+j)$-th turn Breaker claimed only good edges for all $1 \le j \le c'-1$, meaning he claimed a total of $2m(c'-1)$ good edges during these rounds.

By \Cref{cor:fewgoodedges}, at the end of Breaker's $k$-th turn we have $\card{\boundary_G C_k} \le (c - mk)_+$.
Moreover, by \Cref{lem:sum-mi} Maker claimed at most
\begin{equation*}
    \sum_{i=k+1}^{k+c'-1} m_i \le c + (k+c'-1)m - km + 1 = c + 1 + m(c'-1)
\end{equation*}
edges after her $k$-th turn.
Hence, by \Cref{lem:fewgoodedges} at most $c + 1 + m(c'-1)$ new good edges were added to the boundary of Maker's graph in rounds $k+1, \dotsc, k+c'-1$. And by \Cref{obs:awfulbad} no awful or bad edge became good, so we get
\begin{equation*}
    2m(c'-1) \le (c - mk)_+ + c + 1 + m(c'-1),
\end{equation*}
contradicting $c' = \ceil{\frac{2c+2}{m}}$.
\end{proof}

Recall that by \Cref{lem:vk} the sequence $v_i$ is non-decreasing.
Pick the smallest such $j$ from \Cref{clm:crucial}.
In $(k+j+1)$-st turn of Maker, Maker claims some bad or awful edge.

If Maker claims an awful edge (or claimed an awful edge in any other of the rounds $k+1,\dotsc,k+j+1$), we have $v_{k+j+1} > v_k$ and we are done.

If Maker claims no awful edge (and did not in any other of the rounds $k+1,\dotsc,k+j+1$) but claims a bad edge his $(k+j+1)$-th turn, note that Breaker also must have claimed no awful edge in his $(k+j)$-th turn. Next, there are two options.
Either Breaker claims no awful edge in his $(k+j+1)$-th turn, and then $w_{k+j+1}>w_k$ and we are done (as $v_{k+j+1}=v_k$).
Or Breaker claims some awful edge in his $(k+j+1)$-th turn, but then Maker claims some awful edge in her $(k+j+2)$-th turn, hence $v_{k+j+2}>v_k$ and we are again done.

Hence, the proof of \Cref{prop:vkwk} is finished.
\end{proof}

This now also concludes the proof of \Cref{thm:boost}.


\section{Polluted board}
\label{sec:polluted}

In this section, we consider the Maker-Breaker percolation game on a random board, as suggested by Day and Falgas-Ravry~\cite{day2020maker2}.
As we considered so far the setting of $(\Lambda, v_0) = (\ZZ^2, (0,0))$, it is natural to consider the same game after bond percolation is performed.

Consider an infinite connected graph $\Lambda = (V,E)$ and $p \in [0,1]$.
In \indef{bond percolation}, each edge of $\Lambda$ is declared \indef{open} with probability $p$, independently from all the other edges.
An edge is said to be \indef{closed} if it is not open.
We denote by $(\Lambda)_p$ the random subgraph of $\Lambda$ with vertex set $V$ and edge set $\set{ e \in E \st e \text{ is open} }$.
Denote by $\Pperc{p}$ the probability measure induced by this process.

The most striking property of this probabilistic model when $\Lambda = \ZZ^2$ is that it undergoes a phase transition at $p = 1/2$.
Indeed, if by an \indef{open cluster} we mean a connected component of open edges, then $\pperc{p}{\text{exists infinite open cluster}} = 0$ for all $p \leq 1/2$, as shown by Harris~\cite{harris1960lower}, and $\pperc{p}{\text{exists infinite open cluster}} = 1$ for all $p > 1/2$, as shown by Kesten~\cite{kesten1980upper}.
For simpler proofs, see Bollobás and Riordan~\cites{bollobas-riordan2006short,bollobas-riordan2007note},
and for a comprehensive introduction to the subject, see the book of Bollobás and Riordan~\cite{bollobas2006percolation}.

In this section, we are concerned with the Maker-Breaker percolation game on $(\ZZ^2)_p$, so to avoid confusion, we refer to the board after performing bond percolation as a \indef{polluted board}.

If $p \leq 1/2$, then a.s. (almost surely) the open cluster that contains the origin is finite, so Breaker a.s. wins trivially.
On the other hand, if $p > 1/2$, then a.s. there is an infinite open cluster.
There is no guarantee however that the origin will be part of this infinite cluster.
To make the game more interesting, as suggested by Day and Falgas-Ravry~\cite{day2020maker2}, we allow Maker to choose the initial vertex $v_0$ after the randomness of the bond percolation is realised.

\begin{definition}
\label{def:polluted}
    We define the $(m, b)$ Maker-Breaker percolation game on a $p$-polluted board $\Lambda$ as follows.
    Before the game starts, bond percolation with probability parameter $p$ is performed in the graph $\Lambda$ to obtain the polluted board $(\Lambda)_p$.
    After all the randomness was realised, Maker chooses a vertex $v_0$.
    From now onward, the game proceeds as the usual $(m,b)$ Maker-Breaker percolation game on $((\Lambda)_p, v_0)$ that we defined previously.
\end{definition}

We abbreviate as before, so we refer to the game above as the \indef{$(m,b)$-game on $p$-polluted $\Lambda$}.
Note that all the randomness was realised before the game started, so it is a deterministic game played on a random board.
We are interested in strategies that win almost surely on the choice of the board.

The main result of this section is \Cref{thm:polluted}, concerning the $(1,1)$-game on $p$-polluted $\ZZ^2$, so from now on, unless stated otherwise, we are considering this game.

To prove \Cref{thm:polluted}, we present a winning strategy for Breaker that heavily relies on the closely related model of the north-east oriented percolation on $\ZZ^2$, which we will now briefly describe.

Orient each edge of $\ZZ^2$ in the direction of the increasing coordinate value.
That is, each horizontal edge is oriented to the east and each vertical edges is oriented to the north.
Next, each oriented edge is said to be \indef{open} with probability $p$,
independently of all the other edges.
An oriented edge is said to be \indef{closed} if it is not open.
We refer to the probability measure $\Porient{p}$ we obtain as the \indef{north-east oriented percolation on $\ZZ^2$}.
For a detailed overview of the results in this model, see the survey of Durrett~\cite{durrett1984oriented}.

In the bond percolation, we are interested in whether there is or there is not an infinite open cluster. In the north-east oriented percolation, we are interested whether there exists an infinite oriented open path.
Define the critical probability
\begin{equation*}
    p^{*} \defined \inf \set*{ p \in [0,1] \st
    \porient*{p}{(0,0) \text{ is on an infinite open oriented path}} > 0 }.
\end{equation*}
It is known that $0.6298 \leq p^{*} \leq 0.6735$, where the lower bound is due to Dhar~\cite{dhar1982directed}, improving on the ideas of Gray, Wierman and Smythe~\cite{gray1980lower}, and the upper bound is due to Balister, Bollobás and Stacey~\cite{balister1994improved}.
Like the threshold in the bond percolation, there is an infinite oriented open path a.s. if $p > p^{*}$ and there is no infinite oriented open path a.s. if $p \leq p^{*}$ in the north-east oriented percolation.

This model is relevant to us via a simple coupling argument.
Note that by forgetting the orientation of the edges, we obtain that for $p > p^{*}$,
\begin{align*}
    &\pperc*{p}{\text{there is an infinite up-right open path from } 0} \\
    &= \porient*{p}{(0,0) \text{ is a starting point of infinite oriented open path}} > 0.
\end{align*}

We say that a subgraph of $\ZZ^2$ is \emph{barred} if there is no infinite path going only in two directions.
That is, if for all vertices $v \in \ZZ^2$, there is no north-east, no north-west, no south-east and no south-west infinite path starting from $v$.
Coupling the bond percolation with four rotated copies of the north-east oriented percolation model as above gives us the following result.

\begin{proposition}
\label{prop:barred}
If $p < p^{*}$, then a.s. each open cluster of $(\ZZ^2)_p$ is barred.
\end{proposition}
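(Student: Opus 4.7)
The plan is to deduce the proposition by a direct coupling of bond percolation on $\ZZ^2$ with four copies of north-east oriented percolation, one for each of the four monotone direction pairs (NE, NW, SE, SW).

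First, I would verify the one-directional statement: if $p < p^{*}$, then $\Porient{p}$-a.s.\ no vertex of $\ZZ^2$ is the starting point of an infinite open oriented path. By the definition of $p^{*}$ we have
\begin{equation*}
    \porient*{p}{(0,0) \text{ starts an infinite open oriented path}} = 0,
\end{equation*}
and by translation invariance of the north-east oriented percolation, the same holds with $(0,0)$ replaced by any fixed $v \in \ZZ^2$. A countable union bound over all $v \in \ZZ^2$ finishes this step.

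Next, I would set up the coupling. Given the bond percolation configuration $(\ZZ^2)_p$, orient each horizontal edge to the east and each vertical edge to the north, and call an oriented edge open iff the underlying undirected edge is open. The resulting oriented configuration has exactly the law of north-east oriented percolation at parameter $p$. An infinite NE-monotone open path in $(\ZZ^2)_p$ starting from $v$ corresponds to an infinite open oriented path from $v$ in this coupled model; by the previous step, $\Pperc{p}$-a.s.\ no such path exists from any vertex. Applying the same argument to the three rotated orientations (edges pointing west instead of east, or south instead of north, in all four combinations), each of which individually again has the law of north-east oriented percolation up to a rotation/reflection of $\ZZ^2$, I obtain that $\Pperc{p}$-a.s.\ no vertex is the starting point of an infinite NW, SE, or SW monotone path either.

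Intersecting these four a.s.\ events yields that, $\Pperc{p}$-a.s., no vertex $v \in \ZZ^2$ is the starting point of an infinite monotone path in any of the four diagonal directions. In particular, for every open cluster $C$ of $(\ZZ^2)_p$ and every $v \in V(C)$, no such infinite monotone open path starting at $v$ exists inside $C$, so $C$ is barred. The only subtlety is the first step, making sure that the zero probability at $p < p^{*}$ for a single vertex upgrades to a statement about all vertices simultaneously; this is handled cleanly by translation invariance and countable additivity, so I do not expect any real obstacle beyond bookkeeping the four symmetric copies of the oriented model.
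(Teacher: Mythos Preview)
Your proposal is correct and follows exactly the approach the paper indicates: the paper does not give a formal proof of this proposition but simply remarks, just before stating it, that ``coupling the bond percolation with four rotated copies of the north-east oriented percolation model as above gives us the following result.'' Your write-up fleshes out precisely this coupling, together with the translation-invariance and countable-union-bound step needed to pass from a single vertex to all vertices simultaneously.
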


We will prove the following proposition, which together with \Cref{prop:barred} implies \Cref{thm:polluted}.

\begin{proposition}
\label{prop:barred-strategy}
    Fix any barred percolation configuration of $\ZZ^2$.
    Then regardless of which point is fixed by Maker as the origin, Breaker has a winning strategy for the $(1,1)$ Maker-Breaker percolation game on this board.
\end{proposition}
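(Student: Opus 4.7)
The plan is to exhibit an explicit pairing strategy for Breaker, in the spirit of the pairing strategy that Day and Falgas-Ravry used for Maker in the $(1,1)$-game on $\ZZ^2$, but deployed now in the opposite role: to cut $v_0$ off from infinity. The first step is to dispose of the trivial case: if the open cluster of $v_0$ in the fixed configuration is already finite, then the $v_0$-component of Maker's edges together with the unclaimed open edges is contained in this finite cluster from round zero, and Breaker wins automatically by playing any edge he likes.

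Assume then that the open cluster $C$ of $v_0$ is infinite. Invoking the barred hypothesis, for each of the four diagonal directions the set of vertices reachable from $v_0$ by an open monotone path in that direction is finite; call the union of these four sets $H$, a ``monotone hull'' of $v_0$ inside $C$. Any infinite open path from $v_0$ in $C$ must leave $H$ through an open edge pointing in the ``wrong'' direction for its quadrant (say a westbound or southbound edge exiting the NE-reachable set), and, by applying barredness at each subsequent vertex, the continuation of the path outside $H$ cannot become monotone in any diagonal direction either. The construction I have in mind is to pair up a finite collection of such exit edges, with pairs placed at the corners of $H$ and, if needed, at the turning vertices of a finite halo beyond, in such a way that every infinite open path from $v_0$ in $C$ is forced to traverse both edges of at least one pair.

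Given such a pairing of size $k$, Breaker's strategy is the standard one: whenever Maker claims an edge belonging to a pair, respond with its partner in the same round; otherwise claim the next unclaimed paired edge from a fixed enumeration. After at most $k$ rounds Breaker has claimed at least one edge of every pair, and by the covering property of the pairing this, together with the closed edges of the configuration, yields a finite cut separating $v_0$ from infinity. The main obstacle, and the technical heart of the proof, is designing the pairing so that it simultaneously (i) is supported on a finite edge set, (ii) consists only of open edges, and (iii) intersects every infinite open path from $v_0$ in the two edges of some pair. This is precisely where the barred hypothesis is used in its full strength: without it, a monotone open ray leaving $H$ in some quadrant could bypass any finite collection of turning-point pairs, and no bounded strategy for Breaker could succeed.
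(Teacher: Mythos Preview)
Your overall plan---a pairing strategy for Breaker, exploiting that every infinite open path from $v_0$ in a barred configuration must eventually ``turn''---is exactly the right idea, and matches the paper's approach in spirit. But the proposal stops precisely at the hard step: you never actually construct the pairing, and the description you give (``pairs placed at the corners of $H$ and, if needed, at the turning vertices of a finite halo beyond'') is too vague to verify properties (i)--(iii). In particular, trying to pair only along the boundary of the monotone hull $H$ is problematic: a path can exit and re-enter $H$ repeatedly, the shape of $H$ depends delicately on the configuration, and it is not at all clear that the exit edges can be matched in pairs so that every escaping path uses \emph{both} members of some pair. As written, the proposal is a wish list for the pairing rather than a proof that one exists.

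The paper resolves this with a much simpler, configuration-independent construction. At every non-axial vertex $(x,y)$ one pairs the two incident edges pointing back toward the origin (for instance, when $x,y>0$, pair $\{(x,y),(x-1,y)\}$ with $\{(x,y),(x,y-1)\}$). The key observation is then purely combinatorial: if $P=v_0v_1\cdots v_\ell$ is any path with $v_\ell$ outside a box $B_{d+1}$, and $k$ is maximal with $v_0\cdots v_k$ unbarred, then $v_k$ is necessarily non-axial and the consecutive edges $\{v_{k-1},v_k\}$ and $\{v_k,v_{k+1}\}$ are precisely the pair at $v_k$. Breaker therefore only needs to respect this pairing inside the finite box $B_{d+1}$ (with $d$ supplied by \Cref{lem:inframsey}), playing arbitrarily inside the box whenever the pairing does not dictate a move; once every edge in the box is claimed, no Maker path can escape it. Note also that your requirement (ii), that the pairing consist only of open edges, is an unnecessary complication: if the partner of a Maker edge is closed, that pair is already blocked and Breaker is free to play elsewhere in the box.
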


We say that a path is \emph{unbarred} if it only ever uses at most two directions.
Therefore, a barred configuration in $\ZZ^2$ is precisely a configuration with no infinite unbarred paths.

For $d \in \NN$ and $v \in \ZZ^2$, let $B_d(v)= v + [-d,d]^2$ be the ball of radius $d$, centred at $v$ in the $\ell_\infty$-norm.
We say that an edge is contained in $B_d(v)$ if both of its endpoints are.
We also write $B_d$ for the same ball centred at $(0,0)$.
Using a very simple argument in the style of an infinite Ramsey theory, we prove the following lemma.

\begin{lemma}
\label{lem:inframsey}
    Consider any fixed barred percolation configuration of $\ZZ^2$.
    Then for every $v \in \ZZ^2$, there exists $d = d(v) \in \NN$
    such that any unbarred path starting from $v$ is contained within $B_{d}(v)$.
\end{lemma}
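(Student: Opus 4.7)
The plan is to prove this via a direct application of Kőnig's lemma (finite branching) applied separately to each of the four possible direction pairs. First I would fix a vertex $v \in \ZZ^2$ and observe that, since the path is simple, an unbarred path from $v$ uses at most two of the four cardinal directions, and these two directions cannot be opposite (e.g., a simple path cannot use only $N$ and $S$, since it would then be monotone in the $y$-coordinate). Hence every unbarred path from $v$ falls into one of four classes, indexed by the pairs $\Pi \in \set{(N,E),(N,W),(S,E),(S,W)}$, corresponding exactly to the four notions of barredness in the definition.

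Next, for each such $\Pi$ I would build the rooted tree $T_\Pi$ whose vertices are the simple paths from $v$ using only open edges with directions from $\Pi$, and whose parent-child relation is extension by one edge. Each node of $T_\Pi$ has at most $\card{\Pi} = 2$ children, so $T_\Pi$ has finite branching. Since the configuration is barred, there is no infinite simple path from $v$ of type $\Pi$, i.e.\ $T_\Pi$ has no infinite branch. By Kőnig's lemma, $T_\Pi$ is therefore finite, so there is a bound $L_\Pi \in \NN$ on the length of any path in $T_\Pi$. In particular, every vertex reached by such a path lies within $B_{L_\Pi}(v)$, since each step changes the $\ell_\infty$-distance to $v$ by at most one.

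Finally I would set $d(v) \defined \max_\Pi L_\Pi$. Any unbarred path starting from $v$ is of type $\Pi$ for some $\Pi$, hence is contained in $B_{L_\Pi}(v) \subseteq B_{d(v)}(v)$, as required.

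The argument is quite short and I do not anticipate serious obstacles; the only point that requires a little care is the justification that for a simple path only $N$ and $S$ (or only $E$ and $W$) cannot both appear, so that the list of four direction pairs truly exhausts all unbarred paths and matches exactly the four notions of non-existence of infinite paths in the definition of barred. Once that observation is made, Kőnig's lemma delivers the bound uniformly for each pair, and taking a maximum over the four pairs concludes the proof.
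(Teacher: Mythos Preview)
Your proposal is correct and takes essentially the same approach as the paper: both are K\H{o}nig's lemma arguments, with the paper carrying out the standard ``pick a child with infinitely many descendants'' induction by hand rather than naming the lemma explicitly. Your treatment is slightly more systematic in setting up the four trees $T_\Pi$ and in isolating the observation that a simple path using only opposite directions degenerates to a single direction, whereas the paper absorbs this into a pigeonhole/``without loss of generality'' step; but the underlying idea is identical.
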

\begin{proof}
The subtle case we need to treat here is the possibility that there are infinitely many finite unbarred paths from $v$, but still, no infinite unbarred path.

Let $S$ be the set of the vertices of $\ZZ^2$ which are reachable from $v$ using unbarred paths.
If $S$ is finite, then the result follows.
Now we assume $S$ is infinite and we go for a contradiction.

Without loss of generality, the set $S_0 \subseteq S$ of the vertices reachable from $v$ via north-east paths is infinite.
So we can find a vertex $v_1$, such that: $v_1$ is a neighbour of $v$, either at north or east of $v$; the edge $\set{v,v_1}$ is open in the configuration; and such that the set $S_1 \subseteq S_0$ of the vertices reachable by north-east paths from $v_1$ is infinite.

Continuing inductively, we find a sequence of vertices $v_0, v_1, v_2 \dotsc$ such that $v_{i+1}$ is a neighbour of $v_i$ at north or east, the edge $\set{v_i, v_{i+1}}$ is open and $v_0 = v$.
But this is precisely an unbarred infinite path from $v$, which contradicts the configuration being barred.
\end{proof}

We now define a strategy for Breaker that wins on any barred board, and any choice of $v \in \ZZ^2$ as an origin.
In fact, since a translated board is still barred, we assume without loss of generality that $v = (0,0)$.
By \Cref{lem:inframsey}, there is an integer $d$ such that any unbarred path from $(0,0)$ is contained in $B_d$.
Breaker's strategy exploits the existence of this box.

A vertex in $\ZZ^2$ is called \indef{axial} if one of its coordinates coincides with a coordinate of $v$ (i.e. if one of its coordinates is $0$, as we took $v=(0,0)$).
An edge is called \indef{non-axial} if one of its endpoints is not axial.
Breaker's strategy relies on the following pairing.

\begin{definition}
\label{def:pairing}
The \indef{barrier pairing}
is the following pairing of the of the non-axial edges of $\ZZ^2$.
Let $(x,y) \in \ZZ^2$ be a non-axial vertex.
\begin{enumerate}
    \item If $x > 0$ and $y > 0$,
        then pair $\set{(x,y), (x,y-1)}$ with $\set{(x,y),(x-1,y)}$;
    \item If $x > 0$ and $y < 0$,
        then pair $\set{(x,y), (x,y+1)}$ with $\set{(x,y),(x-1,y)}$;
    \item If $x < 0$ and $y > 0$,
        then pair $\set{(x,y), (x,y-1)}$ with $\set{(x,y),(x+1,y)}$;
    \item If $x < 0$ and $y < 0$,
        then pair $\set{(x,y), (x,y+1)}$ with $\set{(x,y),(x+1,y)}$.
\end{enumerate}
\end{definition}

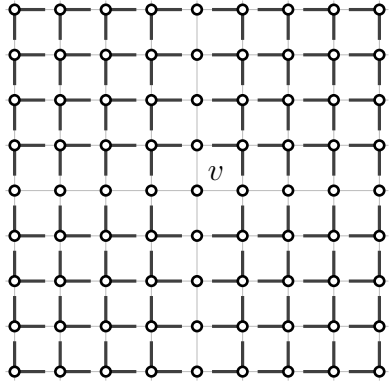
\begin{figure}[ht!]
\centering
\begin{tikzpicture}[scale=0.6]
    \tikzstyle{s-helper}=[lightgray, thin];
    \tikzstyle{s-bold}=[darkgray, very thick];
    \draw[step=1, s-helper] (-4.2,-4.2) grid (4.2, 4.2);
    \foreach \x in {0,...,3}
    \foreach \y in {0,...,3}
    {
        \draw[xshift= \x cm, yshift= \y cm, s-bold]
            (1, 0.33) -- (1, 1) -- (0.33,1);
        \draw[xshift=-\x cm, yshift= \y cm, s-bold]
            (-1, 0.33) -- (-1, 1) -- (-0.33,1);
        \draw[xshift= \x cm, yshift=-\y cm, s-bold]
            (1, -0.33) -- (1, -1) -- (0.33,-1);
        \draw[xshift=-\x cm, yshift=-\y cm, s-bold]
            (-1, -0.33) -- (-1, -1) -- (-0.33,-1);
    }
    \foreach \x in {-4,...,4}
    \foreach \y in {-4,...,4}
        \filldraw[xshift=\x cm, yshift=\y cm, black, fill=white, line width=1.0pt] (0,0) circle (3pt);
    \draw (0,0) node[anchor=south west] {$v$};
\end{tikzpicture}
\caption{The barrier pairing around $v$}
\label{fig:pairing}
\end{figure}

We now define the following strategy for Breaker for the $(1,1)$-game on the $p$-polluted square lattice $\ZZ^2$.

\begin{strategy}[Breaker's strategy for the $(1,1)$-game on a barred board]
\label{str:barred}
Consider a barred configuration of edges of $\ZZ^2$ and a fixed origin at $(0,0)$.
If the open cluster of the origin is finite, then Breaker wins immediately.
Let $d$ be minimum such that any unbarred path starting from the origin is contained within $B_d$, which exists by \Cref{lem:inframsey}.
Breaker will react to each move of Maker using the barrier pairing in \Cref{def:pairing} as follows.
\begin{enumerate}
    \item If Maker claims a non-axial edge inside $B_{d+1}$, and its paired edge is both open and unclaimed, then Breaker claims that edge;
    \item Otherwise, if there is any open unclaimed edge inside $B_{d+1}$, breaker claims one of them arbitrarily;
    \item Otherwise, Breaker claims an arbitrary open unclaimed edge in the open cluster of the origin.
\end{enumerate}
\end{strategy}

\begin{figure}[ht!]
\centering
\begin{tikzpicture}[scale=0.4]
    \tikzstyle{s-helper}=[lightgray, thin];
    \tikzstyle{s-bold}=[darkgray, very thick];
    \definecolor{s-col}{RGB}{253,174,97};
    \tikzstyle{s-hi}=[ultra thick, s-col];
    \draw[step=1, s-helper] (-15.2,-15.2) grid (15.2, 15.2);
    \draw[thick]
    (-14,15) -- (-14,14)    (-13,15) -- (-13,14)
    (-12,15) -- (-12,14)    (-11,15) -- (-11,14)
    (-10,15) -- (-10,14)    (-8,15) -- (-8,14)
    (-6,15) -- (-6,14)    (-2,15) -- (-2,14)
    (-1,15) -- (-1,14)    (4,15) -- (4,14)
    (6,15) -- (6,14)    (8,15) -- (8,14)
    (9,15) -- (9,14)    (15,15) -- (15,14)
    (-15,14) -- (-15,13)    (-14,14) -- (-14,13)
    (-13,14) -- (-13,13)    (-9,14) -- (-9,13)
    (-8,14) -- (-8,13)    (-5,14) -- (-5,13)
    (-2,14) -- (-2,13)    (-1,14) -- (-1,13)
    (1,14) -- (1,13)    (4,14) -- (4,13)
    (6,14) -- (6,13)    (7,14) -- (7,13)
    (9,14) -- (9,13)    (11,14) -- (11,13)
    (12,14) -- (12,13)    (13,14) -- (13,13)
    (15,14) -- (15,13)
    (-15,13) -- (-15,12)    (-13,13) -- (-13,12)
    (-12,13) -- (-12,12)    (-9,13) -- (-9,12)
    (-8,13) -- (-8,12)    (-7,13) -- (-7,12)
    (-6,13) -- (-6,12)    (-2,13) -- (-2,12)
    (-1,13) -- (-1,12)    (1,13) -- (1,12)
    (2,13) -- (2,12)    (4,13) -- (4,12)
    (6,13) -- (6,12)    (8,13) -- (8,12)
    (14,13) -- (14,12)    (15,13) -- (15,12)
    (-13,12) -- (-13,11)    (-12,12) -- (-12,11)
    (-10,12) -- (-10,11)    (-8,12) -- (-8,11)
    (-5,12) -- (-5,11)    (-1,12) -- (-1,11)
    (0,12) -- (0,11)    (2,12) -- (2,11)
    (3,12) -- (3,11)    (4,12) -- (4,11)
    (5,12) -- (5,11)    (7,12) -- (7,11)
    (8,12) -- (8,11)    (12,12) -- (12,11)
    (13,12) -- (13,11)    (14,12) -- (14,11)
    (15,12) -- (15,11)
    (-14,11) -- (-14,10)    (-12,11) -- (-12,10)
    (-11,11) -- (-11,10)    (-10,11) -- (-10,10)
    (-5,11) -- (-5,10)    (-1,11) -- (-1,10)
    (0,11) -- (0,10)    (8,11) -- (8,10)
    (9,11) -- (9,10)    (13,11) -- (13,10)
    (-15,10) -- (-15,9)    (-14,10) -- (-14,9)
    (-12,10) -- (-12,9)    (-8,10) -- (-8,9)
    (-4,10) -- (-4,9)    (0,10) -- (0,9)
    (1,10) -- (1,9)    (2,10) -- (2,9)
    (3,10) -- (3,9)    (4,10) -- (4,9)
    (7,10) -- (7,9)    (9,10) -- (9,9)
    (11,10) -- (11,9)    (13,10) -- (13,9)
    (14,10) -- (14,9)    (15,10) -- (15,9)
    (-15,9) -- (-15,8)    (-14,9) -- (-14,8)
    (-12,9) -- (-12,8)    (-10,9) -- (-10,8)
    (-8,9) -- (-8,8)    (-2,9) -- (-2,8)
    (0,9) -- (0,8)    (1,9) -- (1,8)
    (8,9) -- (8,8)    (11,9) -- (11,8)
    (12,9) -- (12,8)    (13,9) -- (13,8)
    (14,9) -- (14,8)    (15,9) -- (15,8)
    (-15,8) -- (-15,7)    (-11,8) -- (-11,7)
    (-9,8) -- (-9,7)    (-8,8) -- (-8,7)
    (0,8) -- (0,7)    (1,8) -- (1,7)
    (7,8) -- (7,7)    (11,8) -- (11,7)
    (12,8) -- (12,7)    (13,8) -- (13,7)
    (14,8) -- (14,7)    (15,8) -- (15,7)
    (-15,7) -- (-15,6)    (-13,7) -- (-13,6)
    (-12,7) -- (-12,6)    (-11,7) -- (-11,6)
    (-6,7) -- (-6,6)    (0,7) -- (0,6)
    (13,7) -- (13,6)    (14,7) -- (14,6)
    (-15,6) -- (-15,5)    (-14,6) -- (-14,5)
    (-11,6) -- (-11,5)    (-9,6) -- (-9,5)
    (-8,6) -- (-8,5)    (-6,6) -- (-6,5)
    (-4,6) -- (-4,5)    (0,6) -- (0,5)
    (3,6) -- (3,5)    (11,6) -- (11,5)
    (14,6) -- (14,5)    (15,6) -- (15,5)
    (-15,5) -- (-15,4)    (-13,5) -- (-13,4)
    (-12,5) -- (-12,4)    (-11,5) -- (-11,4)
    (-9,5) -- (-9,4)    (-8,5) -- (-8,4)
    (-7,5) -- (-7,4)    (-6,5) -- (-6,4)
    (-3,5) -- (-3,4)    (-2,5) -- (-2,4)
    (0,5) -- (0,4)    (1,5) -- (1,4)
    (6,5) -- (6,4)    (8,5) -- (8,4)
    (9,5) -- (9,4)    (10,5) -- (10,4)
    (11,5) -- (11,4)    (12,5) -- (12,4)
    (13,5) -- (13,4)    (14,5) -- (14,4)
    (-15,4) -- (-15,3)    (-13,4) -- (-13,3)
    (-12,4) -- (-12,3)    (-9,4) -- (-9,3)
    (-7,4) -- (-7,3)    (-3,4) -- (-3,3)
    (6,4) -- (6,3)    (7,4) -- (7,3)
    (8,4) -- (8,3)    (12,4) -- (12,3)
    (14,4) -- (14,3)    (15,4) -- (15,3)
    (-15,3) -- (-15,2)    (-12,3) -- (-12,2)
    (-10,3) -- (-10,2)    (-9,3) -- (-9,2)
    (-8,3) -- (-8,2)    (-7,3) -- (-7,2)
    (-4,3) -- (-4,2)    (3,3) -- (3,2)
    (4,3) -- (4,2)    (5,3) -- (5,2)
    (6,3) -- (6,2)    (8,3) -- (8,2)
    (10,3) -- (10,2)    (12,3) -- (12,2)
    (13,3) -- (13,2)
    (-12,2) -- (-12,1)    (-11,2) -- (-11,1)
    (-10,2) -- (-10,1)    (-9,2) -- (-9,1)
    (-8,2) -- (-8,1)    (-5,2) -- (-5,1)
    (-2,2) -- (-2,1)    (5,2) -- (5,1)
    (7,2) -- (7,1)    (10,2) -- (10,1)
    (13,2) -- (13,1)    (15,2) -- (15,1)
    (-15,1) -- (-15,0)    (-14,1) -- (-14,0)
    (-13,1) -- (-13,0)    (-12,1) -- (-12,0)
    (-10,1) -- (-10,0)    (-9,1) -- (-9,0)
    (-8,1) -- (-8,0)    (-7,1) -- (-7,0)
    (-5,1) -- (-5,0)    (3,1) -- (3,0)
    (4,1) -- (4,0)    (6,1) -- (6,0)
    (9,1) -- (9,0)    (10,1) -- (10,0)
    (11,1) -- (11,0)    (13,1) -- (13,0)
    (14,1) -- (14,0)    (15,1) -- (15,0)
    (-15,0) -- (-15,-1)    (-14,0) -- (-14,-1)
    (-12,0) -- (-12,-1)    (-11,0) -- (-11,-1)
    (-10,0) -- (-10,-1)    (-9,0) -- (-9,-1)
    (-8,0) -- (-8,-1)    (-7,0) -- (-7,-1)
    (-6,0) -- (-6,-1)    (-5,0) -- (-5,-1)
    (3,0) -- (3,-1)    (4,0) -- (4,-1)
    (5,0) -- (5,-1)    (6,0) -- (6,-1)
    (7,0) -- (7,-1)    (8,0) -- (8,-1)
    (10,0) -- (10,-1)    (13,0) -- (13,-1)
    (14,0) -- (14,-1)
    (-15,-1) -- (-15,-2)    (-14,-1) -- (-14,-2)
    (-11,-1) -- (-11,-2)    (-10,-1) -- (-10,-2)
    (-9,-1) -- (-9,-2)    (-6,-1) -- (-6,-2)
    (-3,-1) -- (-3,-2)    (5,-1) -- (5,-2)
    (7,-1) -- (7,-2)    (12,-1) -- (12,-2)
    (13,-1) -- (13,-2)    (14,-1) -- (14,-2)
    (-15,-2) -- (-15,-3)    (-12,-2) -- (-12,-3)
    (-11,-2) -- (-11,-3)    (-10,-2) -- (-10,-3)
    (-8,-2) -- (-8,-3)    (-4,-2) -- (-4,-3)
    (4,-2) -- (4,-3)    (6,-2) -- (6,-3)
    (7,-2) -- (7,-3)    (9,-2) -- (9,-3)
    (10,-2) -- (10,-3)    (11,-2) -- (11,-3)
    (12,-2) -- (12,-3)
    (-13,-3) -- (-13,-4)    (-12,-3) -- (-12,-4)
    (-10,-3) -- (-10,-4)    (-9,-3) -- (-9,-4)
    (-5,-3) -- (-5,-4)    (-4,-3) -- (-4,-4)
    (-3,-3) -- (-3,-4)    (3,-3) -- (3,-4)
    (5,-3) -- (5,-4)    (7,-3) -- (7,-4)
    (9,-3) -- (9,-4)    (12,-3) -- (12,-4)
    (13,-3) -- (13,-4)    (15,-3) -- (15,-4)
    (-15,-4) -- (-15,-5)    (-14,-4) -- (-14,-5)
    (-9,-4) -- (-9,-5)    (-6,-4) -- (-6,-5)
    (-5,-4) -- (-5,-5)    (-3,-4) -- (-3,-5)
    (-2,-4) -- (-2,-5)    (-1,-4) -- (-1,-5)
    (3,-4) -- (3,-5)    (6,-4) -- (6,-5)
    (8,-4) -- (8,-5)    (9,-4) -- (9,-5)
    (10,-4) -- (10,-5)    (13,-4) -- (13,-5)
    (14,-4) -- (14,-5)
    (-11,-5) -- (-11,-6)    (-9,-5) -- (-9,-6)
    (-8,-5) -- (-8,-6)    (-7,-5) -- (-7,-6)
    (1,-5) -- (1,-6)    (2,-5) -- (2,-6)
    (5,-5) -- (5,-6)    (6,-5) -- (6,-6)
    (7,-5) -- (7,-6)    (9,-5) -- (9,-6)
    (12,-5) -- (12,-6)    (15,-5) -- (15,-6)
    (-14,-6) -- (-14,-7)    (-12,-6) -- (-12,-7)
    (-11,-6) -- (-11,-7)    (-10,-6) -- (-10,-7)
    (-6,-6) -- (-6,-7)    (1,-6) -- (1,-7)
    (3,-6) -- (3,-7)    (4,-6) -- (4,-7)
    (5,-6) -- (5,-7)    (9,-6) -- (9,-7)
    (11,-6) -- (11,-7)    (15,-6) -- (15,-7)
    (-14,-7) -- (-14,-8)    (-11,-7) -- (-11,-8)
    (-4,-7) -- (-4,-8)    (2,-7) -- (2,-8)
    (4,-7) -- (4,-8)    (5,-7) -- (5,-8)
    (7,-7) -- (7,-8)    (8,-7) -- (8,-8)
    (10,-7) -- (10,-8)    (11,-7) -- (11,-8)
    (13,-7) -- (13,-8)    (15,-7) -- (15,-8)
    (-15,-8) -- (-15,-9)    (-14,-8) -- (-14,-9)
    (-13,-8) -- (-13,-9)    (-9,-8) -- (-9,-9)
    (-8,-8) -- (-8,-9)    (-7,-8) -- (-7,-9)
    (-4,-8) -- (-4,-9)    (-3,-8) -- (-3,-9)
    (-1,-8) -- (-1,-9)    (1,-8) -- (1,-9)
    (3,-8) -- (3,-9)    (5,-8) -- (5,-9)
    (7,-8) -- (7,-9)    (8,-8) -- (8,-9)
    (10,-8) -- (10,-9)    (12,-8) -- (12,-9)
    (13,-8) -- (13,-9)    (14,-8) -- (14,-9)
    (-15,-9) -- (-15,-10)    (-14,-9) -- (-14,-10)
    (-13,-9) -- (-13,-10)    (-10,-9) -- (-10,-10)
    (-4,-9) -- (-4,-10)    (-1,-9) -- (-1,-10)
    (0,-9) -- (0,-10)    (1,-9) -- (1,-10)
    (2,-9) -- (2,-10)    (4,-9) -- (4,-10)
    (5,-9) -- (5,-10)    (7,-9) -- (7,-10)
    (11,-9) -- (11,-10)    (13,-9) -- (13,-10)
    (14,-9) -- (14,-10)    (15,-9) -- (15,-10)
    (-12,-10) -- (-12,-11)    (-11,-10) -- (-11,-11)
    (-10,-10) -- (-10,-11)    (-9,-10) -- (-9,-11)
    (-7,-10) -- (-7,-11)    (-6,-10) -- (-6,-11)
    (-5,-10) -- (-5,-11)    (1,-10) -- (1,-11)
    (2,-10) -- (2,-11)    (4,-10) -- (4,-11)
    (5,-10) -- (5,-11)    (7,-10) -- (7,-11)
    (8,-10) -- (8,-11)    (9,-10) -- (9,-11)
    (10,-10) -- (10,-11)    (11,-10) -- (11,-11)
    (12,-10) -- (12,-11)    (15,-10) -- (15,-11)
    (-15,-11) -- (-15,-12)    (-14,-11) -- (-14,-12)
    (-13,-11) -- (-13,-12)    (-12,-11) -- (-12,-12)
    (-10,-11) -- (-10,-12)    (-9,-11) -- (-9,-12)
    (-7,-11) -- (-7,-12)    (-6,-11) -- (-6,-12)
    (-5,-11) -- (-5,-12)    (-2,-11) -- (-2,-12)
    (1,-11) -- (1,-12)    (2,-11) -- (2,-12)
    (3,-11) -- (3,-12)    (5,-11) -- (5,-12)
    (8,-11) -- (8,-12)    (9,-11) -- (9,-12)
    (10,-11) -- (10,-12)    (14,-11) -- (14,-12)
    (15,-11) -- (15,-12)
    (-14,-12) -- (-14,-13)    (-13,-12) -- (-13,-13)
    (-11,-12) -- (-11,-13)    (-9,-12) -- (-9,-13)
    (-1,-12) -- (-1,-13)    (0,-12) -- (0,-13)
    (1,-12) -- (1,-13)    (3,-12) -- (3,-13)
    (4,-12) -- (4,-13)    (10,-12) -- (10,-13)
    (11,-12) -- (11,-13)    (12,-12) -- (12,-13)
    (13,-12) -- (13,-13)    (14,-12) -- (14,-13)
    (15,-12) -- (15,-13)
    (-11,-13) -- (-11,-14)    (-9,-13) -- (-9,-14)
    (-8,-13) -- (-8,-14)    (-7,-13) -- (-7,-14)
    (-6,-13) -- (-6,-14)    (-5,-13) -- (-5,-14)
    (-4,-13) -- (-4,-14)    (-2,-13) -- (-2,-14)
    (0,-13) -- (0,-14)    (1,-13) -- (1,-14)
    (2,-13) -- (2,-14)    (3,-13) -- (3,-14)
    (7,-13) -- (7,-14)    (9,-13) -- (9,-14)
    (12,-13) -- (12,-14)    (13,-13) -- (13,-14)
    (14,-13) -- (14,-14)    (15,-13) -- (15,-14)
    (-15,-14) -- (-15,-15)    (-12,-14) -- (-12,-15)
    (-11,-14) -- (-11,-15)    (-10,-14) -- (-10,-15)
    (-9,-14) -- (-9,-15)    (-7,-14) -- (-7,-15)
    (-6,-14) -- (-6,-15)    (-5,-14) -- (-5,-15)
    (-4,-14) -- (-4,-15)    (-3,-14) -- (-3,-15)
    (-1,-14) -- (-1,-15)    (0,-14) -- (0,-15)
    (1,-14) -- (1,-15)    (4,-14) -- (4,-15)
    (5,-14) -- (5,-15)    (6,-14) -- (6,-15)
    (10,-14) -- (10,-15)    (11,-14) -- (11,-15)
    ;

    \draw[thick]
    (-15, 15) -- (-12, 15)    (-11, 15) -- (-10, 15)
    (-9, 15) -- (-7, 15)    (-6, 15) -- (-2, 15)
    (2, 15) -- (3, 15)    (4, 15) -- (6, 15)
    (7, 15) -- (8, 15)    (10, 15) -- (11, 15)
    (14, 15) -- (15, 15)
    (-15, 14) -- (-14, 14)    (-13, 14) -- (-12, 14)
    (-9, 14) -- (-6, 14)    (-2, 14) -- (-1, 14)
    (1, 14) -- (2, 14)    (4, 14) -- (5, 14)
    (8, 14) -- (9, 14)    (11, 14) -- (15, 14)
    (-12, 13) -- (-10, 13)    (-8, 13) -- (-6, 13)
    (-1, 13) -- (2, 13)    (4, 13) -- (6, 13)
    (8, 13) -- (9, 13)    (11, 13) -- (13, 13)
    (14, 13) -- (15, 13)
    (-14, 12) -- (-13, 12)    (-10, 12) -- (-7, 12)
    (-3, 12) -- (-2, 12)    (-1, 12) -- (0, 12)
    (1, 12) -- (2, 12)    (3, 12) -- (4, 12)
    (5, 12) -- (9, 12)    (13, 12) -- (15, 12)
    (-15, 11) -- (-14, 11)    (-12, 11) -- (-9, 11)
    (-7, 11) -- (-5, 11)    (-3, 11) -- (-1, 11)
    (1, 11) -- (2, 11)    (3, 11) -- (8, 11)
    (11, 11) -- (13, 11)    (14, 11) -- (15, 11)
    (-15, 10) -- (-11, 10)    (-9, 10) -- (-8, 10)
    (-7, 10) -- (-6, 10)    (-5, 10) -- (-4, 10)
    (1, 10) -- (4, 10)    (7, 10) -- (10, 10)
    (12, 10) -- (15, 10)
    (-15, 9) -- (-14, 9)    (-12, 9) -- (-10, 9)
    (-8, 9) -- (-4, 9)    (-1, 9) -- (1, 9)
    (3, 9) -- (4, 9)    (7, 9) -- (11, 9)
    (12, 9) -- (13, 9)    (14, 9) -- (15, 9)
    (-13, 8) -- (-10, 8)    (-3, 8) -- (-2, 8)
    (-1, 8) -- (0, 8)   (1, 8) -- (2, 8)
    (8, 8) -- (10, 8)   (11, 8) -- (15, 8)
    (-14, 7) -- (-10, 7)    (-9, 7) -- (-7, 7)
    (-1, 7) -- (0, 7)    (1, 7) -- (2, 7)
    (3, 7) -- (4, 7)    (12, 7) -- (13, 7)
    (14, 7) -- (15, 7)
    (-15, 6) -- (-14, 6)    (-11, 6) -- (-9, 6)
    (-8, 6) -- (-7, 6)    (-6, 6) -- (-5, 6)
    (0, 6) -- (2, 6)    (3, 6) -- (4, 6)
    (14, 6) -- (15, 6)
    (-15, 5) -- (-14, 5)    (-12, 5) -- (-10, 5)
    (-9, 5) -- (-5, 5)    (-4, 5) -- (-2, 5)
    (0, 5) -- (2, 5)    (10, 5) -- (12, 5)
    (-15, 4) -- (-14, 4)    (-13, 4) -- (-12, 4)
    (-11, 4) -- (-9, 4)    (-7, 4) -- (-6, 4)
    (-4, 4) -- (-2, 4)    (-1, 4) -- (1, 4)
    (6, 4) -- (8, 4)    (9, 4) -- (11, 4)
    (12, 4) -- (14, 4)
    (-14, 3) -- (-13, 3)    (-11, 3) -- (-10, 3)
    (-7, 3) -- (-5, 3)    (-4, 3) -- (-3, 3)
    (8, 3) -- (9, 3)    (10, 3) -- (11, 3)
    (12, 3) -- (14, 3)
    (-15, 2) -- (-11, 2)    (-8, 2) -- (-5, 2)
    (3, 2) -- (4, 2)    (5, 2) -- (6, 2)
    (7, 2) -- (8, 2)    (9, 2) -- (11, 2)
    (12, 2) -- (15, 2)
    (-15, 1) -- (-14, 1)    (-12, 1) -- (-9, 1)
    (-8, 1) -- (-7, 1)    (-6, 1) -- (-4, 1)
    (5, 1) -- (9, 1)    (12, 1) -- (13, 1)
    (-14, 0) -- (-12, 0)    (-11, 0) -- (-10, 0)
    (-9, 0) -- (-6, 0)    (4, 0) -- (6, 0)
    (9, 0) -- (12, 0)
    (-15, -1) -- (-14, -1)    (-12, -1) -- (-10, -1)
    (-9, -1) -- (-7, -1)    (4, -1) -- (8, -1)
    (9, -1) -- (12, -1)    (13, -1) -- (14, -1)
    (-14, -2) -- (-13, -2)    (-12, -2) -- (-11, -2)
    (-10, -2) -- (-8, -2)    (6, -2) -- (7, -2)
    (9, -2) -- (10, -2)    (13, -2) -- (15, -2)
    (-15, -3) -- (-14, -3)    (-13, -3) -- (-12, -3)
    (-11, -3) -- (-9, -3)    (-7, -3) -- (-5, -3)
    (-4, -3) -- (-3, -3)    (4, -3) -- (5, -3)
    (6, -3) -- (8, -3)    (9, -3) -- (13, -3)
    (14, -3) -- (15, -3)
    (-13, -4) -- (-12, -4)    (-11, -4) -- (-8, -4)
    (-7, -4) -- (-4, -4)    (-3, -4) -- (-2, -4)
    (5, -4) -- (6, -4)    (9, -4) -- (10, -4)
    (11, -4) -- (12, -4)    (13, -4) -- (14, -4)
    (-15, -5) -- (-13, -5)    (-12, -5) -- (-11, -5)
    (-10, -5) -- (-9, -5)    (3, -5) -- (7, -5)
    (8, -5) -- (13, -5)    (14, -5) -- (15, -5)
    (-14, -6) -- (-13, -6)    (-11, -6) -- (-6, -6)
    (-1, -6) -- (0, -6)    (1, -6) -- (6, -6)
    (7, -6) -- (11, -6)    (12, -6) -- (15, -6)
    (-13, -7) -- (-8, -7)    (-5, -7) -- (-3, -7)
    (-1, -7) -- (1, -7)    (2, -7) -- (3, -7)
    (6, -7) -- (7, -7)    (9, -7) -- (10, -7)
    (13, -7) -- (15, -7)
    (-15, -8) -- (-14, -8)    (-13, -8) -- (-12, -8)
    (-11, -8) -- (-9, -8)    (-8, -8) -- (-7, -8)
    (-4, -8) -- (-3, -8)    (-2, -8) -- (0, -8)
    (1, -8) -- (2, -8)    (4, -8) -- (5, -8)
    (6, -8) -- (8, -8)    (9, -8) -- (10, -8)
    (11, -8) -- (15, -8)
    (-14, -9) -- (-12, -9)    (-2, -9) -- (2, -9)
    (3, -9) -- (4, -9)    (8, -9) -- (14, -9)
    (-15, -10) -- (-14, -10)    (-13, -10) -- (-12, -10)
    (-7, -10) -- (-6, -10)    (-1, -10) -- (0, -10)
    (1, -10) -- (3, -10)    (4, -10) -- (5, -10)
    (7, -10) -- (9, -10)    (10, -10) -- (14, -10)
    (-15, -11) -- (-12, -11)    (-8, -11) -- (-5, -11)
    (-3, -11) -- (-1, -11)    (0, -11) -- (3, -11)
    (4, -11) -- (5, -11)    (7, -11) -- (11, -11)
    (12, -11) -- (15, -11)
    (-15, -12) -- (-13, -12)    (-10, -12) -- (-9, -12)
    (-8, -12) -- (-5, -12)    (-3, -12) -- (-2, -12)
    (-1, -12) -- (0, -12)    (3, -12) -- (7, -12)
    (9, -12) -- (11, -12)    (12, -12) -- (15, -12)
    (-15, -13) -- (-14, -13)    (-11, -13) -- (-10, -13)
    (-9, -13) -- (-7, -13)    (-6, -13) -- (-5, -13)
    (-4, -13) -- (-3, -13)    (1, -13) -- (4, -13)
    (5, -13) -- (7, -13)    (8, -13) -- (10, -13)
    (12, -13) -- (14, -13)
    (-15, -14) -- (-14, -14)    (-12, -14) -- (-11, -14)
    (-10, -14) -- (-9, -14)    (-4, -14) -- (2, -14)
    (3, -14) -- (5, -14)    (6, -14) -- (9, -14)
    (10, -14) -- (11, -14)    (13, -14) -- (15, -14)
    (-15, -15) -- (-12, -15)    (-11, -15) -- (-4, -15)
    (-3, -15) -- (-2, -15)    (-1, -15) -- (2, -15)
    (4, -15) -- (6, -15)    (7, -15) -- (8, -15)
    (11, -15) -- (14, -15)
    ;
    \draw[s-hi]
    (0,0) -- (0,3) -- (-1,3) -- (-1,9) -- (-3,9)
    (-2,9) -- (-2,10) -- (-3,10) -- (-3,13) -- (-4,13)
        -- (-4, 14) -- (-5,14)
    (-3,12) -- (-4,12) -- (-4, 13)
    (-1,7) -- (-2,7) -- (-2,6)
    (-1,6) -- (-4,6)
    (-3,6) -- (-3,8) -- (-7,8)
    (-3,7) -- (-4,7) -- (-4,8)
    (-4,7) -- (-6,7) -- (-6,8)
    (0,0) -- (-4,0) -- (-4,-1) -- (-5,-1)
        -- (-5,-2)  -- (-7,-2)
    (-3,0) -- (-3,2)
    (-2,0) -- (-2,-1)
    (-1,0) -- (-1,2) -- (-2,2) -- (-2,3)
    (-1,1) -- (1,1)
    (0,0) -- (2,0) -- (2,1) -- (3,1)
    (1,0) -- (1,2) -- (0,2) -- (2,2) -- (2,3) -- (0,3) -- (5,3)
    (2,2) -- (2,8) -- (7,8)
    (4,3) -- (4,8) (2,4) -- (4,4)
    (4,6) -- (7,6) -- (7,5) -- (8,5) -- (8,7) -- (9,7)
        -- (9,6) -- (10,6) -- (10,7)
    (4,4) -- (5,4) -- (5,5) -- (9,5) -- (9,6) -- (12,6)
    (5,6) -- (5,10)
    (6,6) -- (6,7) -- (5,7) -- (5,9) -- (6,9)
    (-1,0) -- (-1,-2) -- (-3,-2)
    (-2,-2) -- (-2,-3)
    (-1,-1) -- (1,-1) -- (1,0)
    (0,-2) -- (3,-2)
    (0,-3) -- (2,-3) -- (2,-2)
    (0,0) -- (0,-5) -- (-5,-5) -- (-5,-9)
    (-1,-5) -- (-1,-7) -- (-2,-7) -- (-2,-10) -- (-4,-10)
        -- (-4,-11) -- (-3,-11) -- (-3,-10)
    (-2,-5) -- (-2,-6) -- (-3,-6) -- (-3,-5)
    (-3,-6) -- (-5,-6) -- (-5,-7) -- (-6,-7)
    (-5,-8) -- (-6,-8) -- (-6,-9) -- (-9,-9)
    (-8,-9) -- (-8,-11)
    ;
    \foreach \x / \y in
        {
        -9/-9,-8/-9,-8/-10,-8/-11,-7/8,-7/-2,-7/-9,-6/7,-6/8,-6/-2,-6/-7,
        -6/-8,-6/-9,-5/7,-5/8,-5/14,-5/-1,-5/-2,-5/-5,-5/-6,-5/-7,-5/-8,
        -5/-9,-4/0,-4/6,-4/7,-4/8,-4/12,-4/13,-4/14,-4/-1,-4/-5,-4/-6,
        -4/-10,-4/-11,-3/0,-3/1,-3/2,-3/6,-3/7,-3/8,-3/9,-3/10,-3/11,
        -3/12,-3/13,-3/-2,-3/-5,-3/-6,-3/-10,-3/-11,-2/0,-2/2,-2/3,-2/6,
        -2/7,-2/9,-2/10,-2/-1,-2/-2,-2/-3,-2/-5,-2/-6,-2/-7,-2/-8,-2/-9,
        -2/-10,-1/0,-1/1,-1/2,-1/3,-1/4,-1/5,-1/6,-1/7,-1/8,-1/9,
        -1/-1,-1/-2,-1/-5,-1/-6,-1/-7,0/0,0/1,0/2,0/3,0/-1,0/-2,0/-3,
        0/-4,0/-5,1/0,1/1,1/2,1/2,1/3,1/-1,1/-2,1/-3,
        2/0,2/1,2/2,2/3,2/4,2/5,2/6,2/7,2/8,2/-2,2/-3,
        3/1,3/3,3/4,3/8,3/-2,4/3,4/4,4/5,4/6,4/7,4/8,
        5/3,5/4,5/5,5/6,5/7,5/8,5/9,5/10,
        6/5,6/6,6/7,6/8,6/9,7/5,7/6,7/8,8/5,8/6,8/7,
        9/5,9/6,9/7,10/6,10/7,11/6,12/6
        }
        \filldraw[xshift=\x cm, yshift=\y cm, s-hi, fill=white, line width=1.0pt] (0,0) circle (4pt);
    \filldraw[black, fill=white, line width=1.0pt]
        (0,0) circle (4pt) node[anchor=south west] {$v$};
    \draw[densely dashed, very thick, s-hi] (-14.5,-14.5) rectangle (14.5, 14.5);
\end{tikzpicture}
\caption{Polluted configuration with $p = 0.56$. The box $B_{d}$ is highlighted, as well as the vertices accessible via unbarred paths from $v$.}
\label{fig:polluted}
\end{figure}
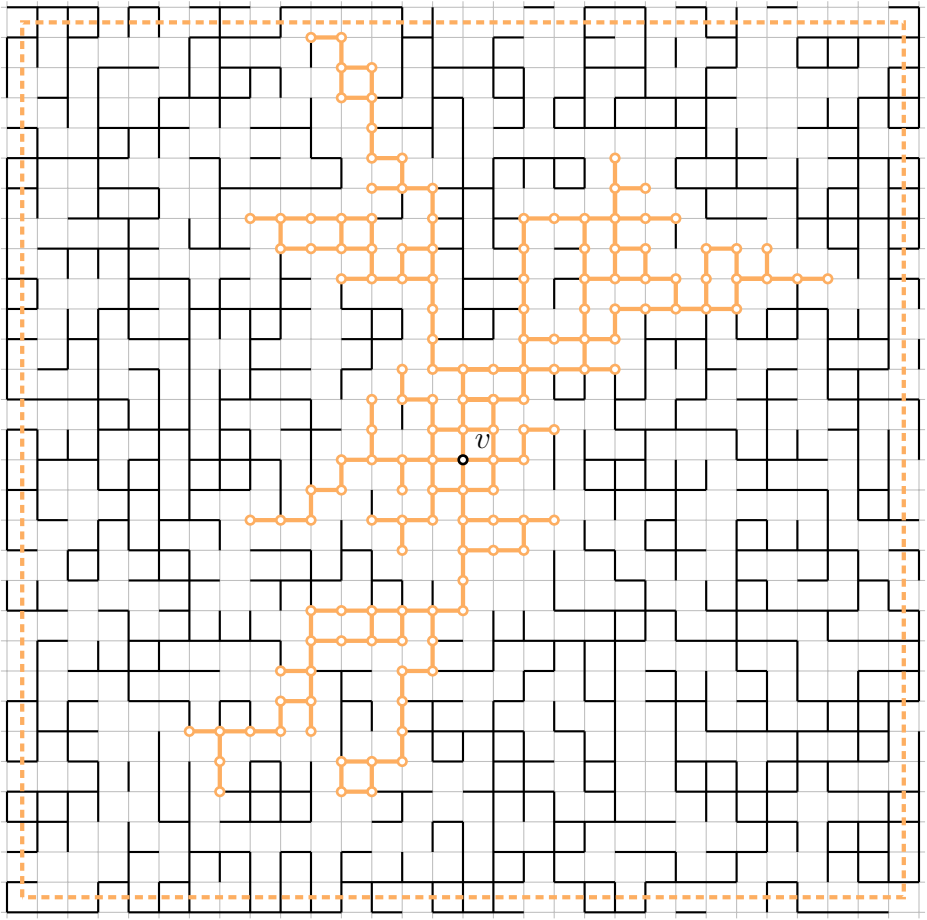

Finally, we show that a.s. \Cref{str:barred} is a winning strategy for Breaker.

\begin{proof}[Proof of \Cref{prop:barred-strategy}]
Consider a barred configuration of edges of $\ZZ^2$ and a vertex $v$ chosen by Maker.
Translate the board so that $v = (0,0)$, and note that the configuration is still barred.
We show that Breaker wins the $(1,1)$-game on this board by following \Cref{str:barred}.

If the origin is in a finite open cluster, Breaker wins immediately, so assume it is in an infinite open cluster.
By \Cref{lem:inframsey}, there is $d$ such that every unbarred path starting from the origin is contained within $B_d$.

Since Breaker always claims an unclaimed edge inside the box $B_{d+1}$ if it is possible, all of $B_{d+1}$ is claimed after a finite number of rounds.
We claim that once every edge inside $B_{d+1}$ is claimed, there is no path from $(0,0)$ to $B_{d+2} \setminus B_{d+1}$ consisting solely of open edges claimed by Maker.
This clearly implies Breaker's win.

Assume for contradiction there was such a Maker's path $P$.
It cannot be an unbarred path, since every unbarred path is fully contained in $B_d$.

Denote $P = \set{v_1, \dotsc, v_\ell}$, where $v_0 = (0,0)$ and $v_\ell \in B_{d+2} \setminus B_{d+1}$.
Let $k$ be maximal such that subpath $\set{v_0, \dotsc, v_{k-1}, v_{k}}$ is unbarred.
This implies that $2 \leq k \leq \ell - 2$, and further that the edges $\set{v_{k-1},v_k}$ and $\set{v_k,v_{k+1}}$ are paired and contained in $B_{d+1}$.
Therefore, by \Cref{str:barred} and the barrier pairing in \Cref{def:pairing}, Maker could not have claimed both these edges.
Therefore, no such path $P$ exists.
\end{proof}

Thus, the proof of \Cref{prop:barred-strategy} and hence also of \Cref{thm:polluted} is finished.


\section{Open problems}
\label{sec:open}

In this paper we have made progress on some of the questions that Day and Falgas-Ravry~\cite{day2020maker2} asked.
Notably, we have improved the upper bound on the value of the ratio parameter $\rho^*$ (if such $\rho^*$ indeed exists).

However, some of their other questions still remain open, as well as some new problems, which we present below.
There are several directions for further research that we consider to be of a great interest.

\begin{description}[labelindent=\parindent, leftmargin=2\parindent]

\item[Maker's win]
We still do not know any integers $(m,b)$ with $b>1$ and $m<2b$ for which Maker has a winning strategy in the $(m,b)$-game on $\ZZ^2$.
In fact, we do not know any such integers even in the boosted version of the game, where Maker is allowed to claim arbitrary but finite number of edges before her first turn.
Maker's strategies in the papers of Day and Falgas-Ravry~\cites{day2020maker1,day2020maker2} suggest that it may be beneficial for Maker to play as a dual Breaker in some auxiliary game.
Hence, perhaps some of the techniques developed in the present paper could help in answering those questions.

\item[Breaking the perimetric ratio with boosted Maker]
In \Cref{sec:ratio}, we have shown that there exists $\delta>0$ such that provided $m$ is large enough and $b \geq (2-\delta)m$, Breaker has a winning strategy in the $(m,b)$-game on $\ZZ^2$.
It would be interesting to study the boosted variant of this game, where Maker is allowed to claim finitely many edges before her first turn.
There, we still do not know of any integers $(m,b)$ with $b<2m$ for which Breaker has a winning strategy in the boosted $(m,b)$-game (though as we have shown in \Cref{sec:fast-boost}, $b=2m$ suffices even in this variant).
Again, it is possible that some of the techniques of the present paper can be developed further to handle this case as well.

\item[Polluted board]
In \Cref{sec:polluted} we considered the case when the $(1,1)$-game is played on a random board obtained by running a usual percolation process with parameter $p$ on $\ZZ^2$ (with origin chosen by Maker).
This board has almost surely an infinite connected component whenever $p>1/2$, yet we have seen that when $p<0.6298$, Breaker almost surely has a winning strategy.
We still do not know if there exists any $\varepsilon > 0$ such that whenever $p > 1-\varepsilon$, Maker almost surely has a winning strategy.
Moreover, note that such a strategy would have to be very different from Maker's winning strategy in the $(1,1)$-game on $\ZZ^2$ which was suggested in~\cite{day2020maker2}.
It might also be of an interest to investigate the general $(m,b)$-game on a polluted board.
Even more generally, it could be interesting to consider the polluted game played on other boards $\Lambda$, and see if similar phenomena occur there.
\end{description}

One could also try to improve the bounds in \Cref{thm:main1} and \Cref{thm:polluted} derived in this paper.
But while improving these constants significantly would be of some interest (note that improving \Cref{thm:main1} by little should not be very hard as we did not try to optimise the constant there fully), we believe that breaking the three barriers highlighted above is even more important.

For a further overview of more open questions, see the paper of Day and Falgas-Ravry~\cite{day2020maker2}.
Note that many seemingly easy questions are still open, for instance, it is not known who wins the $(2,3)$ or $(3,2)$-game on $\ZZ^2$.

\section*{Acknowledgements}
\label{sec:ack}

The authors would like to thank their PhD supervisor Professor Béla Bollobás for suggesting this problem and for his comments.


\begin{bibdiv}
\begin{biblist}

\bib{balister1994improved}{article}{
      author={Balister, P.},
      author={Bollobás, B.},
      author={Stacey, A.},
       title={Improved upper bounds for the critical probability of oriented
  percolation in two dimensions},
        date={1994},
     journal={Random Structures \& Algorithms},
      volume={5},
      number={4},
       pages={573\ndash 589},
}

\bib{beck2008combinatorial}{book}{
      author={Beck, J.},
       title={Combinatorial {G}ames: {T}ic-{T}ac-{T}oe {T}heory},
   publisher={Cambridge University Press},
        date={2008},
}

\bib{bollobas-riordan2006short}{article}{
      author={Bollobás, B.},
      author={Riordan, O.},
       title={{A Short Proof of the Harris–Kesten Theorem}},
        date={2006},
     journal={Bulletin of the London Mathematical Society},
      volume={38},
      number={3},
       pages={470\ndash 484},
}

\bib{bollobas2006percolation}{book}{
      author={Bollobás, B.},
      author={Riordan, O.},
       title={Percolation},
   publisher={Cambridge University Press},
        date={2006},
}

\bib{bollobas-riordan2007note}{article}{
      author={Bollobás, B.},
      author={Riordan, O.},
       title={{A note on the Harris–Kesten Theorem}},
        date={2007},
        ISSN={0195-6698},
     journal={European Journal of Combinatorics},
      volume={28},
      number={6},
       pages={1720\ndash 1723},
}

\bib{chvatal1978biased}{incollection}{
      author={Chv{\'a}tal, V.},
      author={Erd\H{o}s, P.},
       title={Biased positional games},
        date={1978},
   booktitle={Annals of discrete mathematics},
      volume={2},
   publisher={Elsevier},
       pages={221\ndash 229},
}

\bib{day2020maker1}{article}{
      author={Day, A.~N.},
      author={Falgas-Ravry, V.},
       title={{Maker--Breaker percolation games I: Crossing grids}},
        date={2020},
     journal={Combinatorics, Probability and Computing},
       pages={1\ndash 28},
}

\bib{day2020maker2}{article}{
      author={Day, A.~N.},
      author={Falgas-Ravry, V.},
       title={{Maker--Breaker percolation games II: Escaping to infinity}},
        date={2020},
     journal={Journal of Combinatorial Theory, Series B},
}

\bib{dhar1982directed}{article}{
      author={Dhar, D.},
       title={{Directed percolation in two and three dimensions. II. Direction
  dependence of the wetting velocity}},
        date={1982},
     journal={Journal of Physics A: Mathematical and General},
      volume={15},
      number={6},
       pages={1859\ndash 1864},
}

\bib{durrett1984oriented}{article}{
      author={Durrett, R.},
       title={Oriented percolation in two dimensions},
        date={1984},
     journal={The Annals of Probability},
      volume={12},
      number={4},
       pages={999\ndash 1040},
}

\bib{gray1980lower}{article}{
      author={Gray, L.},
      author={Wierman, J.},
      author={Smythe, R.},
       title={Lower bounds for the critical probability in percolation models
  with oriented bonds},
        date={1980},
     journal={Journal of Applied Probability},
       pages={979\ndash 986},
}

\bib{harris1960lower}{article}{
      author={Harris, T.~E.},
       title={A lower bound for the critical probability in a certain
  percolation process},
        date={1960},
     journal={Mathematical Proceedings of the Cambridge Philosophical Society},
      volume={56},
      number={1},
       pages={13\ndash 20},
}

\bib{hefetz2014positional}{book}{
      author={Hefetz, D.},
      author={Krivelevich, M.},
      author={Stojakovi{\'c}, M.},
      author={Szab{\'o}, T.},
       title={Positional games},
   publisher={Springer},
        date={2014},
}

\bib{kesten1980upper}{article}{
      author={Kesten, H.},
       title={{The critical probability of bond percolation on the square
  lattice equals $\frac{1}{2}$}},
        date={1980},
     journal={Communications in Mathematical Physics},
      volume={74},
      number={1},
       pages={41 \ndash  59},
}

\bib{lehman1964solution}{article}{
      author={Lehman, A.},
       title={{A solution of the Shannon switching game}},
        date={1964},
     journal={Journal of the Society for Industrial and Applied Mathematics},
      volume={12},
      number={4},
       pages={687\ndash 725},
}

\end{biblist}
\end{bibdiv}


\end{document}